\newcommand{\jo}[1]{{\textcolor{blue}{#1}}}
\newtheorem{theorem}{Theorem}[section]
\newtheorem{remark}[theorem]{Remark}
\newtheorem{assumption}[theorem]{Assumption}
\newtheorem{lemma}[theorem]{Lemma}
\newtheorem{proposition}[theorem]{Proposition}
\theoremstyle{plain}
\def \dd{{\rm d}}
\def \R{\mathbb{R}}
\def \P{\mathbb{P}}
\def \E{{\mathbb{E}}}
\def \F{\mathbb{F}}
\def \cF{{\mathcal F}}
\def \A{\mathcal{A}}
\def \and{\quad \text{and} \quad}
\DeclareMathOperator*{\tr}{tr}
\DeclareMathOperator*{\esssup}{ess\,sup}
\title[Exploratory OS]{Exploratory optimal stopping: A singular control formulation}
\author[Dianetti]{Jodi Dianetti}
\author[Ferrari]{Giorgio Ferrari}
\author[Xu]{Renyuan Xu}
\keywords{}
\thanks{J. Dianetti: Department of Economics and Finance, University of Rome Tor Vergata. Email: \href{mailto:jodi.dianetti@uniroma2.it}{jodi.dianetti@uniroma2.it}}
 \thanks{G. Ferrari: Center for Mathematical Economics, Bielefeld University. Email: \href{mailto:giorgio.ferrari@uni-bielefeld.de}{giorgio.ferrari@uni-bielefeld.de}}
\thanks{R. Xu: Department of Management Science and Engineering, Stanford University. Email: \href{mailto:ryxu@stanford.edu}{ryxu@stanford.edu
}}
\date{\today}
\numberwithin{equation}{section}
\begin{document}

\begin{abstract}
This paper explores continuous-time and state-space optimal stopping problems from a reinforcement learning perspective. We begin by formulating the stopping problem using randomized stopping times, where the decision maker's control is represented by the probability of stopping within a given time—specifically, a bounded, non-decreasing, càdlàg control process. 
To encourage exploration and facilitate learning, we introduce a regularized version of the problem by penalizing the performance criterion with the cumulative residual entropy of the randomized stopping time.  The regularized problem takes the form of an $(n+1)$-dimensional degenerate singular stochastic control with finite-fuel, where the regularized free boundary becomes the graph of a function mapping the state variable of the original stopping problem into the probability of stopping. 
We address this singular control problem through the dynamic programming principle, which enables us to identify the unique optimal exploratory strategy.  
Finally, we propose both model-based and model-free reinforcement learning algorithms tailored for exploratory optimal stopping problems. We establish policy improvement guarantees for the proposed algorithms. Moreover, the model-free method is of actor–critic type and it is scalable in high-dimensions under neural network parameterization.

\end{abstract} 
\maketitle  
\smallskip 
{\textbf{Keywords}}:
Optimal stopping, singular stochastic control, free boundary problem, entropy regularization, reinforcement learning

\smallskip 
{\textbf{AMS subject classification}}: 
%91A15, %Stochastic games, stochastic differential games 
%49N70, %Differential games and control
60G40,  %Stopping times; optimal stopping problems; gambling theory
93E20, %Optimal stochastic control
35R35,  %Free boundary problems for PDEs
68T01  %General topics in artificial intelligence
%60H10 %Stochastic ordinary differential equations (aspects of stochastic analysis) 
\tableofcontents

\section{Introduction} 
In optimal stopping (OS) problems, a decision maker chooses a time to
take a given action based on an adaptively observed stochastic process in order to optimize an expected performance criterion. Problems of this type are found in the area of Statistics, where the action taken may be to test a hypothesis or to estimate a parameter; in the area of Operations Research, where the action may be to replace a machine, hire a secretary, or reorder stock; and in Mathematical Finance, 
in the context of pricing American contingent claims or in irreversible investment problems
(we refer to the textbook \cite{peskir.shiryav.2006optimal} and to the references therein).
%{The study of OS problems lead to significant advancement in probability theory, and still presents many open challanges related to the characterization of the optimal strategies (see e.g. \cite{deangelis.peskir.2020.global.regularity}) or to the problem of finding equilibria in the multi-agent framework (see \cite{de2022value} and the references therein).}

Despite the wide-ranging applications of optimal stopping across various fields, most of the existing literature relies on the {\it full knowledge} of the system, including both the underlying process and the reward function. Little is known about addressing these problems in a model-free context, where decision-makers interact with an unknown system and learn to improve their decisions over time. In this regard, machine learning techniques, particularly reinforcement learning (RL), offer a promising framework. However, the existing literature has largely focused on ``frequently controlling the system'', leaving the RL framework for optimal stopping largely unexplored.

Generally speaking, RL aims to learn optimal strategies to control an unknown dynamical system (or random environment) by
interacting with the system through exploration and exploitation \cite{sutton2018reinforcement}.  
In recent years, the availability of vast amounts of data and advances in computational resources have spurred extensive research in both theoretical developments and practical applications of RL. 
While much of the existing literature on RL has focused on discrete-time problems, 
%real-world systems in natural sciences and engineering—such as robotics \cite{spong2020robot}, biology \cite{lenhart2007optimal}, medicine \cite{panetta2003optimal}, and finance \cite{merton1992continuous}—are fundamentally {\it continuous in time}. Compared to the discrete-time setting, 
continuous-time RL is rapidly growing and presents unique challenges and technical difficulties in algorithmic design, stability analysis, and efficiency guarantees. 
A key question is how to balance the trade-off between exploring unknown information in the continuous-time system and exploiting current information to take near-optimal actions. 
In their seminal work, \cite{wang2020reinforcement} propose adding Shannon's entropy to the objective function of a continuous-time relaxed linear-quadratic control problem to encourage exploration, leading to an optimal distribution policy of Gaussian-type with the mean capturing the exploited policy and the variance quantifying the degree of exploration. 
Since then, there have been studies on continuous-time RL through the lens of partial differential equations and control theory under entropy regularization, primarily focusing on {\it regular controls}, such as controlling the drift and volatility coefficients of the underlying stochastic processes \cite{bai2023reinforcement,huang2022convergence,jia2022policy,jia2023q,ma2024convergence,tang2022exploratory,tran2024policy,wang2020reinforcement}.

One key challenge that distinguishes OS from regular control is that OS involves a non-smooth decision—whether to stop or continue—while regular controls gradually change the dynamics through drift and/or volatility coefficients. 
As a result, gradient-based RL algorithms, although popular for regular controls, cannot be directly applied to the ``stop-or-continue'' decisions in our setting \cite{reppen2022neural,soner2023stopping}. 
To overcome this difficulty, we replace these sharp rules based on the hitting times by stopping probabilities, leading to a {\it randomized stopping time} that follows the stopping probability. 
Unlike the heuristic of a fuzzy boundary considered in \cite{reppen2022neural}, where the agent stops with a probability proportional to the distance to the boundary, our approach to randomized stopping time is based on principled guidance—regularized objective function with cumulative residual entropy.  
Another advantage of randomized stopping time is its exploratory nature; it stops at different scenarios according to certain probability, thereby collecting more information from the unknown environment in the RL regime. 
Compared to classic RL settings, exploration is particularly essential for optimal stopping problems because the terminal reward can only be collected upon making the stopping decision, contributing to the {\it reward sparsity} challenge in RL \cite{devidze2022exploration,hare2019dealing}.
Therefore, randomized stopping time helps the decision-maker learn more about the unknown terminal reward.

Following the above idea, this paper proposes a framework to address continuous time and state-space OS problems via continuous-time RL. 
We first embed the perspective of exploration-exploitation into OS, and then use this construction to design a new type of RL algorithm for which we provide a convergence analysis. 
 %{\color{red}[$\leftarrow$ We do have an algorithm with convergence  rate analysis, not just the insights. I don't think we need to downgrade this part :-). Also, I think it might be more suitable to put this "summary sentence" after the following paragraph."]}

\subsection{Our work and contributions}
In this section, we provide an informal discussion on the key ideas and contributions of this work. 
The precise assumptions and statements of the results can be found in Sections \ref{section 2}-\ref{sec:general_algorithm}.

The classical OS problem (without regularization) is given, for any initial state $x \in \R^n$, as
\begin{equation*} 
\begin{aligned}
    V(x) &:=  \sup _{\tau} \mathbb{E}\left[\int_{0}^{\tau} e^{- \rho s} \pi (X^x_{s}) \dd s +e ^{- \rho \tau} G(X^x_{\tau})\right], \\
   &\text{subject to} \quad \dd X^x_{t}  =b(X^x_{t}) \dd t+\sigma(X^x_{t}) d W_{t}, \quad X^x_{0}=x, 
\end{aligned}
\end{equation*} 
where the maximization is performed over stopping times $\tau$ of the (augmented) filtration generated by the Brownian motion $W$, $\rho>0$, and the functions $b, \sigma,  \pi, G $ are the data of the problem (satisfying the assumptions in Section \ref{section 2}).

To model exploration, we follow the ideas in \cite{touzi.vieille.2002continuous} and let the agent randomize the choice of the stopping time.
The resulting strategies are chacterized by nondecreasing processes $\xi=(\xi_t)_t$ with $0 \leq \xi_t \leq 1$,
where $\xi_{t}$ is the probability that the agent stops before time $t$ according to 
$\xi_{t}=\mathbb{P}( \tau \leq t | \cF _{t}^{W} )$.
We refer to such processes $\xi$ as \emph{singular controls}. 
However, allowing for randomized stopping times {\it does not change} the optimal value function and, more importantly, it does not imply that optimal actions are necessarily randomized/exploratory (see Proposition \ref{prop xi does not explore}). 
From the RL point of view, this non-exploratory behavior of the optimal control suggests that, without proper modification of the objective function, {\it optimizing} and {\it collecting information} do not naturally come together. 

To overcome the above-mentioned non-exploratory phenomenon and to incentivize exploration, we reward intermediate values of $\xi_t$  by perturbing the original OS problem via the
cumulative residual entropy (CRE) \cite{rao.chen.vemuri.wang.2004cumulative} of the probability measures $\xi$
\begin{eqnarray*}
\begin{aligned}
    {\rm CRE} (\xi) &:= - \int_{0}^{\infty} e^{- \rho t}\left(1-{\xi_{t}}\right) \log \left(1-{\xi_{t}}\right) \dd t.
\end{aligned}
\end{eqnarray*} 
The adoption of CRE in RL is {\it novel} in the literature, as  Shannon's entropy and Kullback–Leibler (KL) divergence are more often used. It is worth noting that this CRE criterion is more suitable for encouraging randomized optimal stopping time by postponing the exercise time via a large cumulative probability. 

The resulting \emph{entropy regularized OS problem} takes the form of the singular control problem:
\begin{equation*}
     V^\lambda (x):=\sup _{\xi} \mathbb E \bigg[ \int _0^\infty e ^{-\rho t} \underbrace{\big( \pi (X^x_t) (1-\xi_t) \dd t  + G(X^x_t) \dd \xi_t \big)}_{\text{exploitation}} -\lambda \int_{0}^{\infty} e^{- \rho t} \underbrace {\left(1-\xi_{t}\right) \log \left( 1-\xi_{t} \right)}_{\text{exploration}} \dd t \bigg],
\end{equation*}
where $\lambda>0$ is a temperature parameter that balances the exploitation and exploration.
For any $\lambda >0$, such a problem admits a unique optimal control $\xi^\lambda$.
%It is clear that the regularized problem \eqref{eq:value_v_lambda}-\eqref{eq:dynamics_intro}-\eqref{eq:dynamics_y_intro} is a singular control problem of non-standard type, with $X$ a (uncontrolled) high-dimensional diffusion process and $Y$ a controlled  one-dimensional process that is degenerate.
%It is well-known in the stochastic control literature that singular control problems and optimal stopping problems are connected in the sense that the value function of a singular control problem can bewritten as the derivative (in the state variable) of the value function of an optimal problem \cite{bather1966continuous,boetius1998connections,de2018stochastic,guo2009class,Karatzas&Shreve84}. Our reformulation \eqref{eq:value_v_lambda} builds  a {\it new connection} between optimal stopping and singular control problems, through the exploratory RL viewpoint. 
The proposed regularization naturally approximates the original OS problem. 
Indeed, we show that (see formal statements in Propositions \ref{proposition vanishing entropy Value} and  \ref{proposition vanishing entropy Control}):
 \begin{itemize}
     \item  $
\sup _x | V^\lambda (x) - V (x)| \leq   
  \lambda (\rho e)^{-1} \to 0 $ as $\lambda \to 0$.
  \item For any sequence $(\lambda_k)_k \to 0$, the sequence $(\xi^{\lambda_k})_k$ converges (up to subsequences) to an optimal singular control $\xi^*$ of the original OS with randomized stopping times.
 \end{itemize}
These convergence results have also been discovered in \cite{wang2020reinforcement} for the linear-quadratic control with Shannon's entropy, using explicit formulas for the value functions and optimal controls.

A detailed study of the entropy regularized OS problem is then performed by applying the dynamic programming principle (DPP).  
To this end, we introduce the additional controlled state variable $Y^{y,\xi}_t = y - \xi_t$, $y \in [0,1]$, and define, for $x \in \R^n$ and $\lambda >0$, the extended problem
\begin{equation*}
     V^\lambda (x,y):=\sup _{\xi_t \leq y} \mathbb E \bigg[ \int _0^\infty e ^{-\rho t} \big( \pi (X^x_t) Y^{y,\xi}_t - \lambda Y^{y,\xi}_t \log (Y^{y,\xi}_t) \big)\dd t  +  \int _0^\infty e ^{-\rho t} G(X^x_t) \dd \xi_t \bigg].
\end{equation*}
This is an $(n+1)$-dimensional {\it degenerate} singular stochastic control problem with finite-fuel.
Notice that $V^\lambda (x,1) = V^\lambda (x)$.
In particular, we show that (see Theorems \ref{theorem value function} and \ref{theorem optimal control} for the formal statements):
\begin{itemize}
\item The value function ${{V^\lambda}}$ is $ W_{loc}^{2, 2}( \R^n \times (0,1))$ and it is the unique solution to the Hamilton-Jacobi-Bellman (HJB) variational inequality 
$$
\max \big\{ \left(\mathcal{L}_{x} -\rho \right) V^\lambda(x, y) + \pi (x) {y - \lambda y \log y}, - V^\lambda_{y}(x, y) + G(x) \big\}=0, 
$$
 with $V^\lambda(x, 0)=0$,  where
$
\mathcal L _x f (x,y) := b (x) D_x f(x,y)  + \frac12 \tr \big( \sigma  \sigma^* (x) D_x^2 f (x,y) \big).
$
\item The optimal control $\xi^{\lambda}$ is of reflecting type and is given by
$
{\xi^\lambda_t := \sup_{s\leq t } \big( y- g_\lambda(X^x_s) \big)^+},
$
with the reflecting free boundary  defined as ${{g_\lambda}} (x) := \sup \big\{ y \in[0,1]  \, | \, - V^\lambda_y (x,y) +  G(x) < 0 \big\}$.
\end{itemize}
These results illustrate that the optimal control of the entropy regularized OS problem is not related to any strict stopping time; that is, by introducing the entropy regularization, we obtain optimal strategies of exploratory type.
From the RL point of view, the exploratory behavior of the optimal control is desirable, as it suggests that {\it optimizing} and {\it collecting information} from the environment can actually be achieved together. 
{Moreover, we obtain a characterization of the minimal optimal stopping time $\tau^*$ in terms of the optimal control $\xi^\lambda$, given by 
$$
\tau ^* = \inf \{ t \, | \, \xi^\lambda_t \geq 1-e^{-1} \}, \quad \text{for any $\lambda \geq 0$.}
$$
Thus, learning the optimal reflecting strategy $\xi^\lambda$ provides the optimal stopping time of the original problem.
Finally, it is important to notice that, while the free boundary of the original stopping problem can be (only) locally expressed as a graph of function, the regularized free boundary becomes the graph of a global function mapping the state variable of the original stopping problem into the probability of stopping.}

From the technical point of view, showing the regularity of $V^\lambda$ (required for the characterization of $\xi^\lambda$) involves several challenges. 
In particular, while the regularity in $x$ is studied via semi-convexity estimates and PDE arguments, the regularity in $y$ is obtained via a probabilistic connection with a different but related OS problem. 
 
%In a benchmark real option example, we can solve the entropy regularized OS problem semi-explicitly (see Theorem \ref{theorem verification real option} for the details).
%For this particular example, we can characterize the free boundary $g_\lambda$ and show its convergence, as $\lambda \to 0$, to the free boundary of the original OS problem.
%Inspired by the fact that the optimal control $\xi^\lambda$ can be fully characterized by the boundary ${{g_\lambda}}$, we develop an RL algorithm based on updating the boundary iteratively. 

Our theoretical analysis suggests a new approach to design RL algorithms for OS problems, which aims at learning the boundary of the $\lambda$-entropy regularized problem rather than the boundary of the original OS problem.
Notice that the value mismatch introduced by the temperature parameter $\lambda$ is of order $\mathcal{O}(\lambda)$.
The proposed RL framework consists of two versions: a model-based numerical version when all model parameters are known, and a model-free learning version when model parameters are not known. For the first version, where all model parameters are known, we design a Policy Iteration algorithm to numerically search for $g_\lambda$. For the second version, where the model parameters are unknown, we provide a sample-based Policy Iteration algorithm. Notably, we do not directly estimate model parameters in the design, which enhances robustness against model misspecification and environmental shifts \cite{agarwal2021theory,fazel2018global,hambly2021policy}. {Our new approach also overcomes the instability issue often encountered when directly learning the boundary of the original OS problem \cite{reppen2022neural}.}

We establish the theoretical foundation of the algorithm for the real option example. Specifically, in the $k$th iteration of the first step, assuming sufficient regularities, we use the second order derivative of $V^\lambda_{g_k}$ (i.e., the value function associated with the policy using reflecting boundary $g_k$) with respect to $y$ to update $g_{k+1}$, which lies in a region where $V^\lambda_{g_k}$ has a {\it better regularity}.  Mathematically, define the new boundary ${g}_{k+1}(x)$ as:
  \begin{equation}\label{eq:updated_policy_intro}
      \begin{cases}
       & \max \Big\{0\leq y < g_{k}(x)\,\Big|\, \partial_{yy} V^\lambda_{g_{k}}(x,y) =0     \Big\} \,\,\text{if} \,\, \partial_{yy}^- V^\lambda_{g_{k}}(x,g_{k}(x))>0, \\
    &  {g}_{k+1}(x) = g_{k} (x)  \qquad \text{otherwise},  
      \end{cases}
  \end{equation}
  where we define the notation $\partial_{yy}^- f(x,y):= \lim_{h\rightarrow0-}\frac{\partial_y f(x,y+h)-\partial_y f(x,y)}{h}$ as the left $y$-derivative of $\partial_y f$ (if exists). With such an updating scheme, we have the following results (see Theorem \ref{prop:policy_improvement} for the formal statement):
\begin{itemize}
    \item \textbf{Policy improvement:} $V^\lambda_{g_{k+1}}(x,y)\ge V^\lambda_{g_k}(x,y)$ for all $k\in \mathbb{N}$.
\end{itemize}
Policy improvement follows from the weak form of It\^o's formula. Related results appear in \cite{wang2020reinforcement} for linear--quadratic control and in \cite{dianetti2026reinforcement} for a one-dimensional real options problem. However, the updating schemes in \eqref{eq:updated_policy_intro} are different.

We then develop an actor–critic algorithm in an unknown environment: the agent observes only discretely sampled reward increments and changes of dynamics  along trajectories (Algorithm \ref{al:agent-environment}). Specifically, the critic (value network) is trained by minimizing a TD(0)-style squared TD error, and the actor (policy network) is updated according to \eqref{eq:updated_policy_intro}, with the critic replacing the unknown value function (see Algorithm \ref{alg:actor-critic}). The algorithm is tested in numerical experiments and demonstrates promising performance (see Section \ref{sec:numerics}).

\subsection{Related literature} 

Our work is related to different streams of literature.
\vspace{0.2cm}

\paragraph{\bf Randomized optimal stopping and singular control.} 
First of all, we relate to the literature on randomized stopping times. 
These typically appear when proving the existence of equilibria in zero-sum or nonzero-sum optimal stopping games (see \cite{de2022value, kifer2013dynkin, riedel2017subgame, solan2012random, touzi.vieille.2002continuous}, among others) and can be thought of as the ``mixed strategies'' in stopping times. Loosely speaking, according to a randomized stopping time, a decision maker chooses 
a probability of stopping and at any time decides or not to stop according to that distribution. As discussed in \cite{solan2012random} and \cite{touzi.vieille.2002continuous}, different but equivalent definitions of randomized stopping times can be developed. In this paper, we follow the approach in \cite{touzi.vieille.2002continuous}, which identifies randomized stopping times with nondecreasing c\`adl\`ag processes starting from zero at the initial time and bounded above by one. 
This in turn leads to the fact that the exploratory formulation of the optimal stopping problem is mathematically equivalent to an $(n+1)$-dimensional degenerate singular stochastic control with finite-fuel (see, e.g., \cite{angelis2019solvable, karatzas2000finite,karoui1988probabilistic}).
\vspace{0.2cm}

\paragraph{\bf Continuous-time RL for regular control.}
Initial studies on continuous-time RL primarily focused on algorithm design and the potential for time discretization \cite{doya2000reinforcement,palanisamy2014continuous,tallec2019making}. 

Following the seminal work of \cite{wang2020reinforcement}, which provided a mathematical perspective within the framework of linear-quadratic control, there has been a recent surge in employing stochastic control theory and partial differential equations (PDEs) to establish the theoretical foundation of continuous-time RL \cite{bai2023reinforcement,huang2022convergence,jia2023q,jia2022policy,ma2024convergence,tang2022exploratory,tran2024policy}. 
In particular, \cite{tang2022exploratory} generalized the framework in \cite{wang2020reinforcement} and studied the exploratory Hamilton--Jacobi--Bellman (HJB) equation arising from a general entropy-regularized stochastic control problem; \cite{jia2023q} developed q-Learning algorithm in continuous time; \cite{jia2022policy} established policy gradient and actor-critic learning frameworks in continuous time and space.  Despite these developments, the overall convergence results of control-inspired or PDE-inspired algorithms, such as the Policy Improvement Algorithm (PIA), remain largely unexplored.
Very recently, \cite{huang2022convergence} proved a qualitative convergence result for the PIA with bounded coefficients when the diffusion term is not controlled.  \cite{tran2024policy} generalized the result to unbounded coefficients or controlled diffusion terms by a uniform estimate for the value sequence generated by PIA. \cite{ma2024convergence}
 provides a much simpler proof of the convergence of the PIA using Feynman-Kac-type probabilistic representation instead of sophisticated PDE estimates.
Moreover, \cite{bai2023reinforcement} showed the convergence of the PIA for an optimal dividend problem, taking advantage that the state process is one dimensional and taking non-negative values. In addition to the PIA method, another popular continuous-time  RL algorithm is the policy gradient method. \cite{reisinger2023linear} demonstrated a linear convergence of this method for certain finite-horizon non-linear control problems, while \cite{giegrich2024convergence} showed its convergence for finite-horizon exploratory linear-quadratic control problems.

Another important aspect of quantifying the effectiveness of continuous-time RL algorithms is regret analysis. For theoretical developments in this area, see \cite{basei2022logarithmic,guo2023reinforcement,szpruch2021exploration,neuman2023offline,szpruch2024optimal}.

We note that all the works mentioned above focus on regular controls.

\vspace{0.2cm}

\paragraph{\bf Machine learning for optimal stopping.}
RL for optimal stopping is closely related to the challenging RL scenario of sparse rewards \cite{devidze2022exploration,hare2019dealing}. More specifically, the terminal reward 
$G(X_\tau)$ can only be collected upon making the stopping decision, contributing to the sparsity of the rewards. This sparsity significantly complicates the learning process compared to the more straightforward regular control problems in continuous-time settings or classical Markov decision processes in discrete time.

When model parameters are fully known to the decision maker, \cite{reppen2022neural} and \cite{soner2023stopping} developed deep-learning-based learning algorithms to learn the stopping boundaries. \cite{bayer2021randomized} considered randomized optimal stopping problems and
established convergence rates for the corresponding forward and backward Monte Carlo optimization algorithms. See also \cite{ata2023singular}  for solving high-dimensional singular control problems using deep learning methods. 

It is worth noting that \cite{denkert2024control} proposed a comprehensive framework for policy gradient methods tailored to continuous-time RL. This framework leverages the connection between stochastic control problems and randomized problems, allowing for applications beyond diffusion models, such as regular, impulse, and optimal stopping/switching problems. However, no theoretical convergence results have been established yet. The most relevant paper to our setting is \cite{dong2024randomized}, which considers a regularized American Put Option problem under Shannon's entropy in one dimension, which involves a single risky asset that follows a geometric Brownian motion. The author employs an intensity control formulation to encourage exploration and demonstrates the convergence of PIA for the regularized problem under a given temperature parameter. 
%However, it is not possible to recover the convergence of the optimal policy for this intensity control formulation when the temperature parameter $\lambda$ tends to zero, even for the simple American Put Option problem.
In comparison to the latter work, we highlight some novelties of our approach. 
Notably, our analysis of the regularized problem is based on regularity analysis that combines PDE arguments and a probabilistic connection with another OS problem. In contrast, the analysis in \cite{dong2024randomized} relies primarily on explicit calculations, which are challenging to extend to higher dimensions. Moreover, within our framework, we can show the convergence of the optimal policies of the regularized problems to the original OS-optimal policy as the temperature parameter $\lambda$ tends to zero.
While our convergence holds regardless of the dimension of the problem, such a convergence is not clear in \cite{dong2024randomized} even for the specific American Put Option problem. {Recently, \cite{dai2024learning} proposed an alternative approach by approximating the optimal stopping problem as a penalized regular control problem with binary decisions, $0$ and $1$. They then applied entropy regularization to the binary decision process, transforming it into a classical entropy-regularized stochastic control problem, to which existing RL algorithms can be applied.}

Another relevant strand of literature leverages non-parametric statistics for stochastic processes to design learning algorithms for control \cite{christensen2023data2,christensen2023nonparametric,christensen2024learning,christensen2023data}. Specifically, \cite{christensen2023nonparametric} and \cite{christensen2024learning} addressed one-dimensional singular and impulse control problems with a non-parametric statistical diffusion structure, where strategies are characterized by one or two values that need to be learned. \cite{christensen2023data} extended this approach to multivariate reflection problems, while \cite{christensen2023data2} emphasized statistical properties such as regret and non-asymptotic PAC bounds.

\subsection{Outline of the paper} 
Section \ref{section 2} examines the properties of the original OS problem, introduces the entropy-regularized version, and explores its relationship with the original problem. The entropy-regularized problem is further explored in Section \ref{section DPP approach}, including a regularity analysis of the value function and the construction of the optimal policy.  
Section \ref{sec:general_algorithm} then shifts focus to the development of learning algorithms, with model-based numerical algorithms studied in Section \ref{sec:model-based} and model-free learning algorithms developed in Section \ref{sec:model-free}.
Finally, Appendix \ref{appendix proof uniqueness stopping time} contains the proof of the uniqueness of the optimal stopping time, while the proofs of some auxiliary technical estimates are collected in Appendix \ref{appendix proof of estimates}.

\subsection{General notation}
For $q\in [1,\infty]$, and a measure space $(E, \mathcal{E}, m)$, we define    
 the set $\mathbb L^q= \mathbb L^q(E)=\mathbb L^q(E,m)$ of measurable functions $f:E\to \mathbb{R}$  s.t. $\int_E |f|^q dm < \infty$, if $q <\infty$, and $\esssup_E |f| < \infty$ for $q=\infty$.
For $d \in \mathbb{N} \setminus \{0\}$, an open set $B \subset \mathbb{R}^d$, $\alpha=(\alpha_1,...,\alpha_{d})\in \mathbb{N}^{d}$, and a function $f:B \to \mathbb{R}$, we denote by $D^\alpha f:= D_1^{\alpha_1} ... D_{d}^{\alpha_{d}} f$ the weak derivative of $f$, where $D_i f:=f_{x_i} := \partial f / \partial x_i$, and we set $|\alpha|:=\alpha_1 + ... +\alpha_d$. 
Sometimes, we will denote by $D_x f$ and $D_x^2f$ the gradient and the Hessian matrix of $f$ (in the weak sense), respectively.
Define the set $C(B)$  of continuous functions $f:B\to \R$,  the set  $C^{\ell }(B)$  of functions $f:B\to \R$ with continuous $\ell$-order derivatives,
and the Sobolev spaces ${W}^{\ell,q}(B)$ of functions $f \in \mathbb L^q(B)$ with $\sum_{|\alpha| \leq \ell} \int_E |D^{\alpha} f|^q dm< \infty$, if $q <\infty$, and $\sum_{|\alpha| \leq \ell} \esssup_E |D^{\alpha} f| < \infty$, for $q=\infty$.
Finally, define the Sobolev spaces ${W}_{loc}^{\ell,q}(B)$ of functions $f$ s.t. $f \in  {W}^{\ell,q}(D)$ for each bounded open set $D\subset B$.

\section{Exploratory formulation and entropy regularization of the OS problem}\label{section 2}
Consider a discount factor $\rho>0$ and, for $n \in \mathbb N \setminus \{0\}$, continuous functions
$ b: \mathbb{R}^{n} \rightarrow \mathbb{R}^{n}, \ \sigma: \mathbb{R}^{n} \rightarrow \mathbb{R}^{n \times m}, \  \pi, G :\mathbb R ^n \to \mathbb R.$
Let  $W=(W_t)_t$ be an $m$-dimensional Brownian motion on a complete probability space $(\Omega, \mathcal F, \mathbb F, \mathbb P )$ and let $\mathbb F^W = (\mathcal F ^W_t)_{t}$ be the right-continuous extension of the filtration generated by $W$.  
Introduce the second-order differential operator
\begin{equation*}\label{eq operator}
    \mathcal L _x \phi (x) := b(x) D_x \phi (x) + \frac12 \tr( \sigma \sigma^* (x) D_x^2 \phi(x) ), \quad \phi \in W_{loc}^{2, 2} \left(\mathbb{R}^{n}\right), 
\end{equation*}
and denote by $\mathcal T$  the set of $\mathbb F^W$-stopping times. 

For an initial condition $x \in \R^n$ and a stopping time $\tau \in \mathcal T$, define the profit functional
$$
\begin{aligned}
J(x;\tau) & := \mathbb{E}\left[\int_{0}^{\tau} e^{- \rho s} \pi (X^x_{s}) \dd s +e ^{- \rho \tau} G(X^x_{\tau})\right], \\
 \text{subject to } \dd X^x_{t} & =b(X^x_{t}) \dd t+\sigma(X^x_{t}) \dd W_{t}, \quad X^x_{0}=x, 
\end{aligned} 
$$
and consider the optimal stopping problem
\begin{equation}\label{eq:value_classic_os}
    V(x) :=  \sup _{\tau \in \mathcal T}  J(x;\tau).
\end{equation}
A stopping time $\tau^*$ is said to be optimal if $J(x;\tau^*) =V(x)$ and the function $V$ is referred to as the value function.

In order to have a well-defined problem, we enforce some requirements on the data of the problem. 
\begin{assumption}
    \label{assumption first}
    The following conditions hold true: 
    \begin{enumerate}
    \item\label{assumption first: condition b sigma} There exists a constant $L>0$ such that, for $\phi = b, \sigma$, we have
    $$ \begin{aligned}
         | \phi (x)| & \leq L (1+|x|),\quad  
        |\phi (\bar x) - \phi (x)|  \leq L |\bar x - x|, \quad \text{for any $x, \bar x \in \mathbb R ^n$}.
    \end{aligned}
    $$ 
    \item\label{assumption first: condition costs} There exist $p\geq 2$ and $K>0$ such that, for $\phi = G, \pi$, we have
    $$
    \begin{aligned}
    |\phi(x)| &\leq K (1+|x|^p), \quad \text{for any $ x \in \mathbb R ^n$}. 
    \end{aligned}
    $$
    \item\label{assumption first: condition rho} $\rho$ is large enough: 
    $
    \rho >  p (\frac32 L + (2p-1) L^2 ). 
    $
    \end{enumerate}
\end{assumption}
The following result is classical.  
\begin{proposition} 
 Under Assumption \ref{assumption first}, there exists an optimal stopping time. 
\end{proposition}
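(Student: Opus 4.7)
The plan is to invoke classical Snell envelope theory. Under Assumption \ref{assumption first}\ref{assumption first: condition b sigma}, the SDE admits a unique strong solution $X^x$ with continuous paths, so that one can define the payoff process
$$R_t := \int_0^t e^{-\rho s}\pi(X^x_s)\,\dd s + e^{-\rho t}G(X^x_t), \qquad t \geq 0,$$
with the natural extension $R_\infty := \int_0^\infty e^{-\rho s}\pi(X^x_s)\,\dd s$, so that $J(x;\tau) = \E[R_\tau]$ for every $\tau \in \mathcal T$. The goal is then to verify that $R$ is $\P$-a.s. continuous on $[0,\infty]$ and satisfies $\E[\sup_{t \geq 0}|R_t|] < \infty$; once these hold, the Snell envelope $S_t := \esssup_{\tau \geq t}\E[R_\tau \mid \cF^W_t]$ is a right-continuous supermartingale majorant of $R$ of class D, and $\tau^* := \inf\{t \geq 0 : S_t = R_t\}$ is an optimal stopping time.

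The key technical step is the discounted moment estimate
$$\E\Bigl[\sup_{t \geq 0} e^{-\rho t}|X^x_t|^p\Bigr] \leq C_p\bigl(1+|x|^p\bigr).$$
I would establish it by applying It\^o's formula to $t \mapsto e^{-\rho t}(1+|X^x_t|^2)^{p/2}$. The drift terms are controlled using the linear growth from Assumption \ref{assumption first}\ref{assumption first: condition b sigma}: bounds of the form $p|x|^{p-1}|b(x)| \leq pL(|x|^{p-1}+|x|^p)$ and $\tfrac12 p(p-1)|x|^{p-2}|\sigma(x)|^2 \leq p(2p-1)L^2(1+|x|^p)$ collected via Young's inequality produce an overall drift bounded by $p\bigl(\tfrac32 L + (2p-1)L^2\bigr)(1+|x|^2)^{p/2}$. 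Combined with the $-\rho(1+|x|^2)^{p/2}$ term from the discount, Assumption \ref{assumption first}\ref{assumption first: condition rho} forces this drift to be negative up to a bounded additive constant, and then Gronwall on finite horizons together with a BDG estimate applied to the martingale part and monotone convergence as $T \to \infty$ deliver the claim.

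Coupled with Assumption \ref{assumption first}\ref{assumption first: condition costs}, this estimate gives $\E[\int_0^\infty e^{-\rho s}|\pi(X^x_s)|\,\dd s] + \E[\sup_{t \geq 0}e^{-\rho t}|G(X^x_t)|] < \infty$, hence $\E[\sup_{t \geq 0}|R_t|] < \infty$. It also yields $e^{-\rho t}G(X^x_t) \to 0$ in $L^1$, so $R_t \to R_\infty$ a.s.\ and in $L^1$, extending the path continuity of $R$ on $[0,\infty)$ (immediate from continuity of $X^x$, $\pi$, and $G$) to $[0,\infty]$. Existence of the optimal stopping time then follows from the standard Snell envelope theorem for right-continuous, class-D reward processes, as in \cite{peskir.shiryav.2006optimal} or \cite{karoui1988probabilistic}, applied to the filtration $\mathbb F^W$.

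The main obstacle is the discounted moment estimate: the exact constant $p(\tfrac32 L + (2p-1)L^2)$ in Assumption \ref{assumption first}\ref{assumption first: condition rho} is delicate, and obtaining it requires careful bookkeeping in the It\^o expansion (tracking $p$, $p(p-1)$, and $p(p-2)$ contributions separately) and parsimonious use of Young's inequality so as not to lose margin against $\rho$. Everything else reduces to routine SDE estimates and the invocation of the classical existence theorem for optimal stopping.
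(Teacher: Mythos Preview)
Your proposal is correct and follows essentially the same route as the paper: verify the integrability condition
\[
\E\Bigl[\int_0^\infty e^{-\rho t}|\pi(X^x_t)|\,\dd t + \sup_{t\geq 0}e^{-\rho t}|G(X^x_t)|\Bigr] < \infty
\]
via a discounted $p$-th moment bound on $X^x$, and then invoke a standard existence result for optimal stopping. The paper's proof is deliberately terse here---it simply records the integrability estimate and cites Corollary~2.9 in \cite{peskir.shiryav.2006optimal}, deferring the moment estimates themselves to Appendix~\ref{appendix proof of estimates}---whereas you unpack the Snell-envelope argument and the It\^o/BDG derivation explicitly; but the substance is the same.
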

\begin{proof}
We limit ourselves to directing the reader to a reference. More specifically,  Assumption \ref{assumption first} implies (see estimate \eqref{eq estimate SDE growth} below and the related \eqref{eq definition constants}) that, for a suitable constant $C<\infty$, we have
$$
\mathbb E \left[ \int_0^\infty e^{-\rho t} |\pi(X^x_t)| \dd t + \sup_{t\geq 0} e^{-\rho t} |G(X^x_t)| \right] < C(1+|x|^p).
$$ 
Hence, the result follows by Corollary 2.9 at p.\ 46 in \cite{peskir.shiryav.2006optimal} (notice that, differently from \cite{peskir.shiryav.2006optimal}, we do not require stopping times to be finite).
\end{proof}
 
Some of the later results (see in particular Proposition \ref{prop xi does not explore} and Proposition \ref{proposition vanishing entropy Control} below) enjoy a sharper statement under uniqueness of the optimal stopping time, which is typically satisfied in standard Markovian settings.
For the sake of illustration, we discuss the following sufficient conditions in the one-dimensional case.
\begin{lemma}\label{lemma uniqueness stopping time}
Under Assumption \ref{assumption first}, if
\begin{equation} 
    \label{condition for UNIQUENESS}
    \begin{aligned}
        &\text{$n=1$, $G \in C^2(\R)$, $\sigma^2(x) \geq  c>0$ and $D_x G(x) \leq C (1 + |x|)$ for some $c,C\in (0,\infty)$,} \\
        & \text{the function $ \hat \pi := \pi + (\mathcal L_x -\rho) G$ is strictly increasing and there exists $\bar x $ s.t. $\hat \pi (\bar x)=0$,}
    \end{aligned}
\end{equation}
then the optimal stopping time is unique.
\end{lemma}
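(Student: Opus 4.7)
The plan is to reformulate the reward via It\^o so that the unique-root structure of $\hat\pi$ becomes directly exploitable, and then use the standard supermartingale characterization of optimality together with the non-degeneracy of $\sigma$ to force any optimal $\tau$ to coincide with the first entry into the stopping region.

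First, I would apply the It\^o--Dynkin formula to $e^{-\rho t}G(X^x_t)$. Under $G\in C^2(\mathbb R)$ with linearly growing derivative, Assumption \ref{assumption first} and the large-discount condition on $\rho$ provide moment bounds on $X^x$ that make the stochastic-integral term a true martingale (after a standard localization) and that kill the boundary contribution at infinity. This yields, for every $\tau\in\mathcal T$,
\begin{equation*}
J(x;\tau) \;=\; G(x) + \mathbb E\!\left[\int_0^\tau e^{-\rho s}\,\hat\pi(X^x_s)\,\dd s\right].
\end{equation*}
By strict monotonicity, $\hat\pi<0$ on $(-\infty,\bar x)$ and $\hat\pi>0$ on $(\bar x,\infty)$, so maximizing $J(x;\cdot)$ amounts to maximizing an integral functional whose integrand changes sign exactly at $\bar x$.

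Next, classical regularity theory for one-dimensional uniformly elliptic OS problems gives $V\in W^{2,p}_{loc}(\mathbb R)$, $V\ge G$, and the HJB variational inequality $\max\{(\mathcal L_x-\rho)V+\pi,\,G-V\}=0$ a.e. Applying the generalized It\^o formula to $e^{-\rho t}V(X^x_t)$ yields, for every $\tau\in\mathcal T$,
\begin{equation*}
V(x)-J(x;\tau) \;=\; -\mathbb E\!\left[\int_0^\tau e^{-\rho s}\bigl((\mathcal L_x-\rho)V+\pi\bigr)(X^x_s)\,\dd s\right] + \mathbb E\!\left[e^{-\rho\tau}(V-G)(X^x_\tau)\right],
\end{equation*}
with both summands non-negative. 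Optimality of $\tau$ therefore forces (i) $X^x_\tau\in\mathcal S:=\{V=G\}$ a.s., and (ii) $(\mathcal L_x-\rho)V+\pi=0$ for Leb$\otimes\mathbb P$-a.e.\ pair $(s,\omega)$ with $s\le\tau(\omega)$. On $\mathrm{int}(\mathcal S)$, the local identity $V=G$ gives $(\mathcal L_x-\rho)V+\pi=\hat\pi$; since the variational inequality forces $\mathcal S\subseteq\{\hat\pi\le 0\}=(-\infty,\bar x]$, strict monotonicity yields $\hat\pi<0$ throughout $\mathrm{int}(\mathcal S)$. Hence (ii) forces $X^x_s\notin\mathrm{int}(\mathcal S)$ for Lebesgue-a.e.\ $s\in[0,\tau]$, almost surely.

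Finally, set $\tau^\star:=\inf\{t\ge 0:X^x_t\in\mathcal S\}$. Continuity of $V$ makes $\mathcal S$ closed, so path-continuity together with (i) gives $\tau\ge\tau^\star$ a.s. The hard step is upgrading this to equality. On the event $\{\tau>\tau^\star\}$, a local-time / occupation-time argument based on the uniform ellipticity $\sigma^2\ge c>0$ and the regularity of one-dimensional diffusions shows that $X^x$ spends strictly positive Lebesgue time inside $\mathrm{int}(\mathcal S)$ on every interval $[\tau^\star,\tau^\star+\varepsilon]\subseteq[\tau^\star,\tau]$, contradicting the a.e.\ conclusion of the previous paragraph. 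Therefore $\tau=\tau^\star$ a.s., and uniqueness follows. The main obstacle is precisely this last step: rigorously precluding that an optimal $\tau$ strictly exceeds $\tau^\star$ by having $X^x$ linger on the (possibly irregular) boundary $\partial\mathcal S$ without penetrating its interior. This is exactly where one-dimensionality and uniform ellipticity of $\sigma$ are essential, as they prevent $X^x$ from ``sticking'' to the Lebesgue-null set $\partial\mathcal S$.
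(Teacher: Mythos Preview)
Your proof follows essentially the same route as the paper's: reduce via It\^o to the functional $\hat V(x):=\sup_\tau\mathbb E[\int_0^\tau e^{-\rho s}\hat\pi(X^x_s)\,\dd s]$, invoke the $W^{2,2}_{loc}$ variational inequality, and argue that any optimal $\tau$ strictly exceeding the first entry time $\tau^\star$ into $\mathcal S=\{V=G\}$ forces the process to spend positive time where the generator term is strictly negative, contradicting optimality. The paper phrases this via the martingale optimality principle applied to $e^{-\rho t}\hat V(X^x_t)+\int_0^t e^{-\rho s}\hat\pi(X^x_s)\dd s$, but that is equivalent to your decomposition of $V(x)-J(x;\tau)$ into two nonnegative pieces.

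The one simplification you are missing---and which dissolves what you flag as the ``main obstacle''---is that the paper first observes $\hat V$ is nondecreasing (since $\hat\pi$ is increasing and, by 1D comparison, $x\mapsto X^x_t$ is nondecreasing), so $\mathcal S=\{\hat V=0\}$ is either empty or a half-line $(-\infty,x^*]$ with $x^*\le\bar x$. Once you know this, the last step is immediate: at $\tau^\star$ one has $X^x_{\tau^\star}\le x^*$, uniform ellipticity forces the process into the open half-line $(-\infty,x^*)=\mathrm{int}(\mathcal S)$ instantaneously, and there $(\mathcal L_x-\rho)\hat V+\hat\pi=\hat\pi<0$ strictly. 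No local-time or occupation-time machinery is required, and your concern about $X^x$ lingering on an irregular $\partial\mathcal S$ evaporates because $\partial\mathcal S=\{x^*\}$ is a single point. Adding this monotonicity observation would close your sketch completely.
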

The proof of this lemma relies on classical arguments in OS and it is provided in Appendix \ref{appendix proof uniqueness stopping time}.

\subsection{Exploratory formulation via singular controls}
Inspired by \cite{touzi.vieille.2002continuous}, we introduce a notion of randomized/exploratory stopping times.
Define the set of \emph{singular controls} as the set of processes
\begin{align*}
    \A (1):= \Big\{ \xi : & \, \Omega \times[0, \infty) \to [0,1], \text{ $\F ^W$-adapted, nondecreasing, càdlàg, with $\xi_{0-}=0$} \Big\}, 
\end{align*} 
and assume the probability space $(\Omega, \cF, \P)$ to be large enough to accommodate a $W$-independent uniformly distributed random variable $U:\Omega \to [0,1]$.

Given any $\xi \in \mathcal A (1)$, we consider the random variable $\tau^\xi$ on $(\Omega, \mathcal F)$ by setting $\tau^\xi:=\inf \left\{t \geq 0 \, | \, \xi_{t}>U\right\}$, with the convention $\inf \emptyset = +\infty$, and we will refer to it as \emph{randomized/exploratory stopping time}.
Notice that $\tau^\xi$ is not necessarily an $\mathbb F ^W$-stopping time.
However, if $\tau \in \mathcal T$, the process $\xi^\tau \in \mathcal A (1)$ defined by $\xi^\tau := ( \mathds 1 _{\{ t \geq \tau \} } )_t$ is such that $\tau = \tau^{\xi^\tau}$, which gives a natural inclusion of $\mathcal T$ into $\mathcal A (1)$.
Moreover, we have
$$
\mathbb{P} \big( \tau^\xi \leq t \mid \cF _{t}^{W}\big)
=\mathbb{P}\left( U \leq {\xi}_{t} \mid \cF _{t}^{W}\right)
=\int_{0}^{\xi_{t}} \dd u=\xi_{t}, 
$$
so that $\xi_t$ can be interpreted as the probability of stopping before time $t$.
Notice also that $\xi_\infty := \lim_{t \to \infty} \xi_t$ is not necessarily equal to 1: indeed, the related randomized stopping time $\tau^\xi$ is not necessarily finite.

For $\xi \in \mathcal A (1)$, by evaluating the performance of the randomized stopping time $\tau^\xi$, we obtain
\begin{align*}
J(x;\tau^\xi)  =& 
\mathbb{E}\bigg[ \int_{0}^{\tau^\xi} e^{- \rho t} \pi (X^x_{t}) \dd t +e ^{- \rho \tau^\xi} G\left(X^x_{\tau^\xi}\right)\bigg] \\
 =& \mathbb{E}\left[\int_{0}^{\infty} e^{- \rho t} \pi (X^x_{t}) \mathds{1}_{\{t \leq \tau^\xi \}} \dd t 
 +\int_{0}^{\infty} e ^{- \rho t} G(X^x_t ) \mathbb{P} \big( \tau^\xi \in \dd t \mid \cF^{W}\big) \right] \\
 =& \mathbb{E}\left[\int_{0}^{\infty} e^{-\rho t} \pi (X^x_{t})  \mathds{1}_{\left\{\xi_{t}< U\right\}} \dd t +  \int_{0}^{\infty} e ^{- \rho t} G(X^x_t ) \dd \xi_t \right] \\
 =& \mathbb{E}\left[\int_{0}^{\infty} e^{-\rho t} \pi (X^x_{t}) \left(1-\xi_{t}\right) \dd t + \int_{0}^{\infty} e ^{- \rho t} G(X^x_t ) \dd \xi_t  \right].
\end{align*}
{Here and in the sequel, with slight abuse of notation we set $\int_0^\infty \phi_t \dd \xi_t : = \int_{0-}^\infty \phi_t \dd \xi_t$ for any continuous function $\phi$ for which the integral is well defined.}
The previous expression suggests to define a profit functional in terms of the singular controls $\xi \in \mathcal A (1)$ by
\begin{equation*}
    J^0 (x;\xi)  := \mathbb{E}\left[\int_{0}^{\infty} e^{-\rho t} \pi (X^x_{t}) \left(1-\xi_{t}\right) \dd t + \int_{0}^{\infty} e ^{- \rho t} G(X^x_t ) \dd \xi_t  \right], \quad \xi \in \mathcal A (1),
\end{equation*}
as a natural extension of $J$ to randomized stopping times.

However, allowing for randomized stopping times does not change the optimal value and (crucial for our story) it  does not imply that optimal actions are necessarily randomized, as discussed in the next proposition and remark.

\begin{proposition}\label{prop xi does not explore}
Under Assumption \ref{assumption first}, for any $x \in \R^n$ we have 
$$
V(x) = \sup _{\xi \in \mathcal A (1) } J^0(x;\xi) = J^0 (x;\xi^*), 
$$
with $\xi^* := ( \mathds 1 _{ \{ t \geq \tau^* \} } )_t$ and $\tau^*$ optimal for $J(x;\cdot)$.    
Moreover, if the  optimal stopping time $\tau^*$ is unique (e.g., if \eqref{condition for UNIQUENESS} holds), then $\xi^* := (\mathds 1 _{ \{ t \geq \tau^* \} })_t$ is the unique optimal control for $J^0(x;\cdot).$
\end{proposition}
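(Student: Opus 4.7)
The identity $V(x)=\sup_\xi J^0(x;\xi)=J^0(x;\xi^*)$ is proved by showing the two inequalities separately. The direction $V(x)\le \sup_\xi J^0(x;\xi)$ is the easy one: the embedding $\tau\mapsto \xi^\tau:=(\mathds{1}_{\{t\ge \tau\}})_t\in\A(1)$ satisfies, by direct computation, $J^0(x;\xi^\tau)=J(x;\tau)$, since the first term of $J^0$ collapses to $\int_0^\tau e^{-\rho t}\pi(X^x_t)\,\dd t$ and the second to $e^{-\rho\tau}G(X^x_\tau)$. Applying this to an optimal $\tau^*$ simultaneously yields $V(x)\le \sup_\xi J^0(x;\xi)$ and the equality $J^0(x;\xi^*)=V(x)$.

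For the reverse inequality I would use a quantile disintegration of the singular control. For $\xi\in\A(1)$ and $u\in[0,1]$, set $\tau^u:=\inf\{t\ge 0 \colon \xi_t>u\}$; since $\xi$ is càdlàg and $\F^W$-adapted and $\F^W$ is right-continuous, the début theorem yields that $\tau^u$ is an $\F^W$-stopping time for every fixed $u$. The plan is to rewrite both building blocks of $J^0(x;\xi)$ as $u$-averages of the original profit functional via the pathwise identities
\begin{equation*}
    1-\xi_t=\int_0^1\mathds{1}_{\{t<\tau^u\}}\,\dd u,\qquad \int_0^\infty f(t)\,\dd\xi_t=\int_0^1 f(\tau^u)\,\mathds{1}_{\{\tau^u<\infty\}}\,\dd u.
\end{equation*}
The first is immediate from the equivalence $\{t<\tau^u\}=\{\xi_t\le u\}$ up to a $u$-null set, while the second is the standard change-of-variables formula for the Stieltjes integral against $\xi_\cdot(\omega)$ viewed as the distribution function of a (sub-)probability on $[0,\infty]$, whose quantile is $\tau^{\cdot}(\omega)$. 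Fubini's theorem then gives
\begin{equation*}
    J^0(x;\xi)=\int_0^1 J(x;\tau^u)\,\dd u\le \int_0^1 V(x)\,\dd u=V(x),
\end{equation*}
closing the circle.

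For the uniqueness claim under the assumption that $\tau^*$ is unique, I would argue as follows: any optimal $\bar\xi$ satisfies $\int_0^1 J(x;\tau^u_{\bar\xi})\,\dd u=V(x)$ by the disintegration just proved, and since each integrand is bounded above by $V(x)$, equality must hold for Lebesgue-a.e.\ $u\in[0,1]$. Uniqueness of $\tau^*$ then forces $\tau^u_{\bar\xi}=\tau^*$ $\P$-a.s.\ for a.e.\ $u\in[0,1]$. Letting $u$ decrease to $0$ shows $\bar\xi_s=0$ for $s<\tau^*$, and letting $u$ approach $1$ together with the right-continuity of $\bar\xi$ forces $\bar\xi_{\tau^*}=1$; hence $\bar\xi=\xi^*$ as càdlàg non-decreasing processes bounded by $1$.

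\emph{Main obstacle.} The technically delicate step is the rigorous verification of the quantile identity for $\dd\xi_t$ when $\xi_\infty<1$ (so that $\tau^u=\infty$ on a positive-measure set of $u$'s) and when $\xi$ has atoms or flat pieces; these are classical features of the distribution-quantile duality, but must be handled carefully pathwise in an adapted setting, together with the integrability estimates (guaranteed by Assumption \ref{assumption first} and the growth of $\pi,G$) needed to apply Fubini. Once this disintegration is in place, the rest of the argument is essentially bookkeeping.
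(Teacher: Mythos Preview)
Your proposal is correct and essentially identical to the paper's proof: both disintegrate $J^0(x;\xi)$ as $\int_0^1 J(x;\tau^u)\,\dd u$ via the quantile map and Fubini, then bound each slice by $V(x)$, and the uniqueness argument is the same. The only cosmetic differences are that the paper uses the non-strict quantile $\inf\{t:\xi_t\ge z\}$ and handles the case $\xi_\infty<1$ by the explicit splitting $1-\xi_t=(1-\xi_\infty)+\int_t^\infty\dd\xi_s$, which is precisely the ``main obstacle'' you anticipated.
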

\begin{proof}
For generic $\xi \in \mathcal A (1)$ and $z \in [0,1]$, we can define the stopping time $\tau ^\xi (z)$ as
$$
\tau^\xi (z) := \inf \{ t \geq 0 \, | \, \xi_t \geq z \},
$$
with the convention $\inf \emptyset := + \infty$.
Indeed, notice that $\tau^\xi (z) = + \infty$ if $z > \xi_\infty$.
By using Fubini's theorem and then the change of variable formula (see Chapter 0 in \cite{revuz.yor.2013continuous}), we have
$$
\begin{aligned}
    J^0(x&;\xi) \\
    =& \mathbb{E}\left[\int_{0}^{\infty} e^{-\rho t} \pi (X^x_{t}) \left(1 -\xi_\infty + \int_t^\infty  \dd \xi_s  \right) \dd t + \int_{0}^{\infty} e ^{- \rho t} G(X^x_t )  \dd \xi_t  \right] \\
     =& \mathbb{E}\left[ (1 -\xi_\infty) \int_{0}^{\infty} e^{-\rho t} \pi (X^x_{t})   \dd t + \int_{0}^{\infty}  \left( \int_0^t e^{-\rho s} \pi (X^x_{s})   \dd s +  e ^{- \rho t} G(X^x_t ) \right)  \dd \xi_t  \right] \\
     =& \mathbb{E}\left[ (1 -\xi_\infty) \int_{0}^{\infty} e^{-\rho t} \pi (X^x_{t})   \dd t 
     +\int_{0}^{\xi_\infty} \bigg( \int_0^{\tau^\xi (z)} e^{-\rho s} \pi (X^x_{s})   \dd s +  e ^{- \rho {\tau^\xi (z)}} G(X^x_{\tau^\xi (z)} ) \bigg)   \dd z  \right] \\
     =& \int_{0}^1 \mathbb{E}\bigg[   \int_0^{\tau^\xi (z)} e^{-\rho s} \pi (X^x_{s})   \dd s +  e ^{- \rho {\tau^\xi (z)}} G(X^x_{\tau^\xi (z)} )   \bigg]   \dd z 
     =\int_{0}^1 J(x;\tau^\xi (z) )  \dd z.
\end{aligned}
$$
Thus, since  $\tau^\xi(z)$ is an $\mathbb F ^W$-stopping time, by the optimality of $\tau^*$ we have
$$
J^0(x; \xi) \leq \int_{0}^1 \sup_{\tau \in \mathcal T } J(x;\tau )   \dd z = J(x;\tau^*) = V(x) = J^0(x;\xi^*), 
$$
where the last equality follows from the fact that  $\xi^*$ is such that $\tau^{\xi^*}(z) = \tau^*$ for any $z \in (0,1]$. This proves the first part of the proposition.

Assume now that the optimal stopping time $\tau^*$ is unique.
By repeating the previous argument with $\xi$ being optimal, we obtain that $J(x;\tau^\xi (z)) = J(x;\tau^*)$ $ \dd z$-a.e.\ in $(0,1)$. 
By uniqueness of the optimal stopping time, we deduce that $\tau^\xi (z) = \tau^*$ $ \dd z$-a.e.\ in $(0,1)$, which in turn implies $\xi = \xi^*$.
\end{proof}

\begin{remark}[Non-exploratory behavior of the optimal controls]\label{remark xi does not explore}
We elaborate on Proposition \ref{prop xi does not explore} from the RL and exploratory points of view. 
Under the uniqueness of the optimal stopping time $\tau^*$, the optimal singular control $\xi^*$ in fact corresponds to the strict stopping time $\tau^*$. 
Thus, no randomization is needed for the optimal strategy. 
In particular, once an action becomes necessary, the agent stops the process, which prevents the gradual collection of information on the performance of other actions.
From the RL point of view, this non-exploratory behavior of the optimal control $\xi^*$ suggests that optimizing and collecting information do not naturally come together, and a modification is necessary in order to overcome this phenomenon (see the related Remark \ref{remark the optimal control randomizes} below for a discussion on the consequences and benefits of our entropy regularization). 
\end{remark}

\subsection{Entropy regularization}
In light of Proposition \ref{prop xi does not explore} and of the related Remark \ref{remark xi does not explore}, we introduce a regularization term to incentivize exploration/randomization.  This is achieved by incorporating an entropy term to regularize the problem, drawing motivation from the RL literature.

Since the sample paths of the processes $\xi \in \mathcal A (1)$ are not necessarily absolutely continuous with respect to the Lebesgue measure, we choose (a discounted version of) the \emph{cumulative residual entropy} (see \cite{rao.chen.vemuri.wang.2004cumulative}) weighted by a parameter $\lambda >0$; 
namely, we consider, for $\xi \in \A (1)$ the entropy
\begin{equation}\label{eq:CRE}
\begin{aligned}
\Lambda^\lambda (\tau^\xi) :=&-\lambda \int_{0}^{\infty} e^{- \rho t} \mathbb{P} \left( \tau^\xi \geq t \mid \cF _{t}^{W} \right) \log \left(\mathbb{P}\left(\tau^\xi \geq t \mid \mathcal{F}_{t}^{W}\right)\right)  \dd t \\
 =& -\lambda \int_{0}^{\infty} e^{- \rho t}\left(1-\xi_{t}\right) \log \left(1-\xi_{t}\right)  \dd t =: \Lambda^\lambda (\xi).
\end{aligned}
\end{equation}
The entropy $\Lambda^\lambda$ is nonnegative and achieves its highest values when the probability $\xi_t$ is near the level $e^{-1}$, thus incentivising the use of randomized stopping times.

Building on this intuition,  we define an entropy regularized OS problem 
\begin{equation}\label{eq value singular control problem}
    V^\lambda (x) := \sup _{\xi \in \mathcal A (1)} J^\lambda ( x; \xi ), 
\end{equation}
where the exploration--exploitation trade off is captured by the profit functional 
\begin{equation*}
    J^\lambda ( x; \xi ):= \mathbb E \bigg[ \int _0^\infty e ^{-\rho t} \underbrace{\big( \pi (X^x_t) (1-\xi_t) \dd t  + G(X^x_t) \dd\xi_t \big)}_{\text{exploitation}} -\lambda \int_{0}^{\infty} e^{- \rho t} \underbrace {\left(1-\xi_{t}\right) \log \left( 1-\xi_{t} \right)}_{\text{exploration}} \dd t \bigg].
\end{equation*}
A control $\xi^\lambda \in \mathcal A (1)$ is said to be optimal if $J^\lambda (x;\xi^\lambda ) = V^\lambda (x)$.

Before studying the entropy regularized problem and its connections with the original stopping problem, we list here few remarks. 
\begin{remark}\label{remark existence of optimal control} 
For any $x \in \R^n$ and $\lambda >0$, the existence of a unique optimal control $\xi^\lambda$ can be established by exploiting the strict concavity of the functional $J^\lambda(x; \cdot)$ {in the control variable $\xi$} (see in particular Theorem 8 in \cite{Menaldi.Taksar89}).  
Moreover, Theorem \ref{theorem optimal control} below will characterize such an optimal control and illustrate that the randomized strategy maximizing $J^\lambda$ does not correspond to a (classical) stopping time (see also Remark \ref{remark the optimal control randomizes}). 
\end{remark}

\begin{remark}[Connection between optimal stopping and singular control] 
It is well-known in the stochastic control literature that optimal stopping problems and singular control problems enjoy intimate connections.
On the one hand,  the derivative (in the controlled state variable) of the value function of a singular control problem can be written as the value function of an optimal stopping problem \cite{bather1966continuous,boetius1998connections,de2018stochastic,guo2009class,Karatzas&Shreve84}.
On the other hand, randomized stopping strategies can be naturally seen as finite-fuel singular controls (see \cite{touzi.vieille.2002continuous} or the discussion above), which has the advantage of compactifying the space of stopping strategies (thus explaining its heavy use in the Dynkin game literature).

In this framework, our entropy regularization  adds two crucial properties on the latter connection. 
First, while solely considering randomized strategies still leads to a degenerate singular control problem (where the optimal actions are pure jumps, as in Proposition \ref{prop xi does not explore}), our cumulative residual entropy regularizes the problem by making it concave and allows for a more explicit characterization of the optimal control in terms of a reflecting strategy (see Theorem \ref{theorem optimal control} below). Secondly, this entropy regularization very well approximates the original OS problem, both in terms of value and optimal strategy (see Propositions \ref{proposition vanishing entropy Value} and \ref{proposition vanishing entropy Control} in the next subsection). For these reasons, our entropy regularization seems to be a useful tool in order to approximate/characterize the equilibria also in the context of Dynkin games, which we leave for future research.
\end{remark}

\begin{remark}
Our choice to work with a Markovian formulation is motivated by RL. 
Since the typical optimization problem that RL aims to solve (in the discrete time case) is a Markov decision problem, starting our study with a Markovian optimal stopping problem seems to be a natural choice. 
However, it should be pointed out that our entropy regularization formulation (and its subsequent analysis) also allows us to treat optimal stopping problems in a non-Markovian framework. 
In this regard, we limit ourselves to mentioning that (while we will solve the Markovian entropy-regularized problem using the DDP approach in Section \ref{section DPP approach} below), in order to tackle the non-Markovian entropy-regularized OS problem, one can rely on representation theorems via backward stochastic differential equations (see \cite{bank&elkaroui2004, Bank&Riedel01} among others).
\end{remark}

\subsection{Vanishing entropy limit}
With elementary arguments, one can show that the entropy regularized problem approximates the original OS problem as the temperature parameter $\lambda$ tends to zero.
\begin{proposition}\label{proposition vanishing entropy Value}
Under Assumption \ref{assumption first}, for any $\lambda , \bar \lambda \in [0,1]$ we have 
$$
\sup _{ x \in \R ^n } | V^\lambda ( x) - V^{\bar \lambda } (x)| \leq   
 | \lambda - \bar \lambda| (\rho e)^{-1}.
 $$
In particular, $V^\lambda  \to V $ uniformly on $ \mathbb R ^n$,  as $\lambda \to 0$.    
\end{proposition}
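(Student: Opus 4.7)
The plan is to exploit the simple observation that the only dependence on $\lambda$ in the profit functional $J^\lambda(x;\xi)$ is through the entropy term $\Lambda^\lambda(\xi)$, which is a bounded functional of $\xi$ uniformly in $x$. Writing
$$
J^\lambda(x;\xi) - J^{\bar\lambda}(x;\xi) = \Lambda^\lambda(\xi) - \Lambda^{\bar\lambda}(\xi) = -(\lambda-\bar\lambda)\int_0^\infty e^{-\rho t}(1-\xi_t)\log(1-\xi_t)\,dt,
$$
the key observation is that the integrand involves the function $u \mapsto -u\log u$ on $[0,1]$, which is nonnegative and attains its maximum $e^{-1}$ at $u = e^{-1}$.

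Thus, first I would bound $|\Lambda^\lambda(\xi) - \Lambda^{\bar\lambda}(\xi)| \leq |\lambda-\bar\lambda|\int_0^\infty e^{-\rho t}e^{-1}\,dt = |\lambda-\bar\lambda|(\rho e)^{-1}$ uniformly over $\xi \in \mathcal{A}(1)$. Since this bound is independent of both $\xi$ and $x$, it yields
$$
|J^\lambda(x;\xi) - J^{\bar\lambda}(x;\xi)| \leq |\lambda-\bar\lambda|(\rho e)^{-1} \quad \text{for all } \xi \in \mathcal{A}(1),\, x \in \mathbb{R}^n.
$$
Taking supremum over $\xi$ on both sides (using the standard inequality $|\sup f - \sup g| \leq \sup|f-g|$) gives $|V^\lambda(x) - V^{\bar\lambda}(x)| \leq |\lambda-\bar\lambda|(\rho e)^{-1}$ for every $x$, whence the supremum bound follows.

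For the final claim $V^\lambda \to V$ uniformly as $\lambda \to 0$, I would apply the just-established inequality with $\bar\lambda = 0$, noting that $J^0$ coincides with the profit functional over randomized stopping times and that, by Proposition \ref{prop xi does not explore}, $V^0(x) = \sup_{\xi \in \mathcal{A}(1)} J^0(x;\xi) = V(x)$.

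There is no serious obstacle here: the proof is a one-line estimate plus the identification $V^0 = V$ already provided in the preceding subsection. The only mild subtlety is ensuring that the suprema are finite (so that the $|\sup - \sup|$ inequality is meaningful), which follows from the integrability estimates already invoked in the existence proof for the optimal stopping time together with the uniform bound $\Lambda^\lambda(\xi) \leq \lambda(\rho e)^{-1}$.
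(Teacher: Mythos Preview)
Your proposal is correct and follows essentially the same route as the paper: both arguments reduce to the pointwise bound $-u\log u \leq e^{-1}$ on $[0,1]$, integrate against $e^{-\rho t}$, and pass to the supremum over $\xi$ via $|\sup f - \sup g| \leq \sup |f-g|$. The only cosmetic difference is that the paper first orders $\lambda \geq \bar\lambda$ to get the sign of $V^\lambda - V^{\bar\lambda}$ before bounding, whereas you work directly with the absolute value; also note that $\Lambda^\lambda(\xi)$ in the paper is a random variable, so strictly speaking $J^\lambda - J^{\bar\lambda} = \mathbb{E}[\Lambda^\lambda(\xi) - \Lambda^{\bar\lambda}(\xi)]$, but your pathwise bound handles this immediately.
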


\begin{proof}
Take $x \in \mathbb R$ and  $\lambda , \bar \lambda \in [0,1]$.
If $\lambda \geq \bar \lambda$, for every $\xi \in \mathcal A (1)$ we have  $J ^\lambda (x;\xi) \geq J ^{\bar \lambda } (x;\xi)$, so that
$$
0 \leq V^\lambda ( x) - V^{\bar \lambda } (x).
$$
Moreover, upper bounding the right-hand side above, we find
$$
\begin{aligned}
0 \leq V^\lambda ( x) - V^{\bar \lambda } (x)  
& = \sup _{\xi \in \mathcal A (1)} J ^\lambda (x;\xi) - \sup _{\bar \xi \in \mathcal A (1)} J ^{\bar \lambda } (x; \bar \xi) \\
& \leq \sup _{\xi \in \mathcal A (1)} \Big( J ^\lambda (x;\xi) -  J ^{\bar \lambda } (x;  \xi) \Big) \\
 & = (  \lambda - \bar \lambda)   \sup _{\xi \in \mathcal A (1)} \mathbb E  \bigg[\int_0^\infty e ^{-\rho t}     \big( - (1-\xi_t) \log (1-\xi_t) \big)  \dd t \bigg] \\
 & = ( \lambda - \bar \lambda) \frac1{\rho} \sup_{z \in [0,1]} (-z \log z) =  ( \lambda - \bar \lambda) (\rho e)^{-1},
\end{aligned}
$$
which is the claimed bound. 
We conclude the proof by choosing $\bar \lambda  = 0$ and taking limits as $\lambda \to 0$.
\end{proof}

\begin{proposition}\label{proposition vanishing entropy Control}
For any $x \in \R^n$ and $\lambda >0$, let  $\xi^\lambda$ be the unique optimal control as in Remark \ref{remark existence of optimal control}.
Under Assumption \ref{assumption first}, the following statements hold true:
\begin{enumerate}
    \item There exist a subsequence $(\lambda_k)_k$ with $\lambda_k \to0$ and an optimal control $\xi^*$ for $J^0 (x;\cdot)$ such that
$\xi^{\lambda_k} \to \xi^*$ weakly in $\mathbb L ^2 (\Omega \times [0,\infty) )$ as $k \to \infty$.
    \item If the optimal stopping time $\tau^*$ is unique (e.g., if \eqref{condition for UNIQUENESS} holds), then $ \xi ^\lambda  \to   (\mathds 1 _{ \{ t \geq \tau^* \} })_t $ weakly in $\mathbb L ^2 (\Omega \times [0,\infty) )$ as $\lambda \to 0$.     
\end{enumerate}
\end{proposition}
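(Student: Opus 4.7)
The plan is a weak compactness argument combined with the value convergence established in Proposition \ref{proposition vanishing entropy Value}. First, since each $\xi^\lambda_t \in [0,1]$, the family $(\xi^\lambda)_{\lambda \in (0,1]}$ is uniformly bounded in the Hilbert space $H:=\mathbb L^2(\Omega\times [0,\infty); \dd \P \otimes e^{-\rho t} \dd t)$, with norm at most $\rho^{-1/2}$. Given any sequence $\lambda_k \to 0$, Banach-Alaoglu produces a subsequence (still denoted $(\lambda_k)_k$) and a process $\xi^* \in H$ such that $\xi^{\lambda_k} \rightharpoonup \xi^*$ weakly in $H$. To check $\xi^* \in \A(1)$, I would apply Mazur's lemma to obtain convex combinations of $(\xi^{\lambda_k})_k$ converging strongly in $H$, and hence pointwise a.e.\ along a subsequence; this transfers the constraints $0\leq \xi^*\leq 1$, the $\F^W$-adaptedness, and the nondecreasing property in $t$ to the limit, after which the c\`adl\`ag modification of $\xi^*$ is selected.

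Next, the plan is to identify $\xi^*$ as optimal for $J^0(x;\cdot)$. By Proposition \ref{proposition vanishing entropy Value}, $J^\lambda(x;\xi^\lambda)=V^\lambda(x)\to V(x)$, while the uniform entropy estimate $0\leq J^\lambda(x;\xi^\lambda)-J^0(x;\xi^\lambda)\leq \lambda(\rho e)^{-1}$ forces $J^0(x;\xi^{\lambda_k})\to V(x)$. It thus suffices to show the upper-semicontinuity $\limsup_k J^0(x;\xi^{\lambda_k})\leq J^0(x;\xi^*)$ (which will in fact be an equality) along the weakly convergent subsequence. The running-reward term $\mathbb E[\int_0^\infty e^{-\rho t} \pi(X^x_t)(1-\xi_t)\,\dd t]$ is a continuous linear functional of $\xi$ on $H$ by the growth bounds of Assumption \ref{assumption first}. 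For the Stieltjes term $\mathbb E[\int_0^\infty e^{-\rho t} G(X^x_t)\,\dd\xi_t]$, I would apply It\^o-type integration by parts, after a mollification of $G$ into smooth $G^\varepsilon$, to recast it as
\[
\mathbb E\Big[\int_0^\infty e^{-\rho t} \xi_t\big(\rho G(X^x_t)-\mathcal L_x G(X^x_t)\big)\dd t\Big],
\]
which is itself a continuous linear functional of $\xi$ on $H$. Passing to the weak limit then yields $J^0(x;\xi^*)=V(x)$, so that $\xi^*$ is optimal, proving (1).

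For part (2), uniqueness of the optimal stopping time $\tau^*$ combined with Proposition \ref{prop xi does not explore} says that $(\mathds 1_{\{t\geq \tau^*\}})_t$ is the \emph{unique} optimizer of $J^0(x;\cdot)$; consequently every subsequence of $(\xi^\lambda)_\lambda$ has a further subsequence converging weakly to this same limit, and a standard subsequence argument upgrades this to weak convergence of the full family. The main obstacle is the rigorous handling of the Stieltjes integral in the upper-semicontinuity step when $G$ is merely continuous with polynomial growth: the mollification $G^\varepsilon \to G$ and the subsequent passage $\varepsilon \to 0$ must be controlled uniformly in $\xi$, which is feasible because $\dd\xi_t$ has total mass at most one and the moment bounds for $X^x$ implied by Assumption \ref{assumption first} supply the needed domination.
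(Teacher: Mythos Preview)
Your outline differs substantially from the paper's route. The paper does not attempt to establish weak continuity of $\xi \mapsto J^0(x;\xi)$; instead, after extracting a weak limit $\xi^*$, it invokes a Koml\'os--type result (Lemma~3.5 in \cite{K}) to produce Ces\`aro averages $\zeta^m = \tfrac{1}{m}\sum_{k=1}^m \xi^{\lambda_k}$ for which $\int_0^\infty e^{-\rho t} f_t\,\dd\zeta^m_t \to \int_0^\infty e^{-\rho t} f_t\,\dd\xi^*_t$ $\P$-a.s.\ for every $f \in C_b([0,\infty))$. The \emph{concavity} of $J^0(x;\cdot)$ then gives $J^0(x;\zeta^m) \geq \tfrac{1}{m}\sum_k J^0(x;\xi^{\lambda_k}) \to V(x)$, and dominated convergence along the Ces\`aro means yields $J^0(x;\xi^*) = \lim_m J^0(x;\zeta^m) = V(x)$. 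This sidesteps entirely any need to differentiate $G$.

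Your integration-by-parts scheme, by contrast, has two concrete problems under Assumption~\ref{assumption first}. First, the displayed identity involving $\rho G - \mathcal{L}_x G$ is meaningless when $G$ is merely continuous; you must keep $G^\varepsilon$ throughout and manage the double limit in $(\varepsilon,k)$. Second, and more seriously, your claim that the running-reward and post-IBP Stieltjes terms are \emph{continuous linear functionals on $H$} requires $\pi(X^x_\cdot)$ and $(\rho - \mathcal{L}_x)G^\varepsilon(X^x_\cdot)$ to lie in $H$, i.e.\ to be square-integrable against $e^{-\rho t}\dd t \otimes \dd\P$. This demands $\rho > \hat c_0(2p)$ for the first term and worse for the second (since $|\mathcal L_x G^\varepsilon(x)| \lesssim C_\varepsilon(1+|x|^{p+2})$ once $b,\sigma$ multiply $D_xG^\varepsilon, D^2_xG^\varepsilon$), neither of which follows from Assumption~\ref{assumption first}. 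The uniform bound $0\leq \xi^\lambda_t\leq 1$ does let you pair against $L^1$ test functions instead, and truncating $G$ to compact support before mollifying would tame $\mathcal L_x G^\varepsilon$; but these repairs are not the cosmetic domination your last paragraph suggests, and without them the argument does not close under the stated hypotheses.
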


\begin{proof} 
Since the family $(\xi^\lambda)_\lambda$ is bounded,  we can find a subsequence $(\lambda_k)_k$  with $\lambda_k \to 0$   and a limit point $\xi^*$ such that
$\xi^{\lambda_k} \to \xi^*$ weakly in $\mathbb L ^2 (\Omega \times [0,\infty) )$ as $k \to \infty$.
We need to show that $\xi^*$ is optimal for $J^0(x;\cdot)$. 

First of all, using Proposition \ref{proposition vanishing entropy Value}, we find
\begin{equation}\label{eq J to V convergence 1}
\begin{aligned}
 |V(x) - J^0(x; \xi^{\lambda_k} ) | 
 & \leq |V(x)- J^{\lambda_k} (x; \xi^{\lambda_k} ) | + | J^{\lambda_k} (x; \xi^{\lambda_k} ) - J^0(x; \xi^{\lambda_k} )| \\
 & \leq |V(x)- V^{\lambda_k} (x ) | + \E [ \Lambda ^{\lambda_k}  (\xi^{\lambda_k}) ] \\
 & \leq C {\lambda_k} \to 0, \quad  \text{as $k\to \infty$.}
\end{aligned}
\end{equation}
Moreover, since $\sup_{t \geq 0} \xi^{\lambda_k}_t \leq 1$, we can employ (after some minimal adjustment to take care of our infinite time horizon $[0,\infty)$) Lemma 3.5 in \cite{K} in order to find a subsequence (not relabelled)  $(\lambda_k)_k$ with $\lambda_k \to 0$ and a limit point $\xi \in \mathcal A (1)$ such that, setting
$
\zeta^m_t := \frac1m \sum_{k=1}^m \xi^{\lambda_k}_t,
$
we have
$$
\int_0^\infty e^{-\rho t} f_t \dd\zeta^m_t \to \int_0^\infty e^{-\rho t} f_t \dd\xi_t,  \ \mathbb P \text{-a.s., as $m \to \infty$, for any $f \in C_b([0,\infty))$.}
$$
It is easy to show that $\xi = \xi^*$. 
Furthermore, since $\zeta^m \in \mathcal A (1)$, by Proposition  \ref{prop xi does not explore} and the concavity of $J^0 (x;\cdot)$ we find
$$
\begin{aligned}
    V(x) \geq J^0 (x; \zeta^m) &\geq \frac{1}{m} \sum_{k=1}^m J^0(x;\xi^{\lambda_k}) \\
    & = \frac{1}{m} \sum_{k=1}^m J^{\lambda _k} (x;\xi^{\lambda_k}) +  \frac{1}{m} \sum_{k=1}^m \big(J^{0} (x;\xi^{\lambda_k}) - J^{\lambda _k} (x;\xi^{\lambda_k}) \big) \\
    & = \frac{1}{m} \sum_{k=1}^m J^{\lambda _k} (x;\xi^{\lambda_k}) -  \frac{1}{m} \sum_{k=1}^m \E [\Lambda^{\lambda_k} (\xi^{\lambda_k})] \\
     & \geq \frac{1}{m} \sum_{k=1}^m J^{\lambda _k} (x;\xi^{\lambda_k}) -  (e \rho)^{-1} \frac{1}{m} \sum_{k=1}^m {\lambda_k} \to V(x), \quad \text{as $m\to \infty$,}
\end{aligned}
$$
where the limits in the right-hand side follows from \eqref{eq J to V convergence 1}.
Thus, we obtain
$
\lim_{m} J^0(x;\zeta^m) = V(x). 
$  
By using the convergence of $\zeta^m $ to $ \xi^*$ and the dominated convergence theorem, the latter limits implies that
$$
J^0(x;\xi^*) = \lim_{m} J^0(x;\zeta^m) = V(x),
$$
proving the optimality of $\xi^*$ ({\it cf.} \ Proposition \ref{prop xi does not explore}).
 
If the optimal stopping time $\tau^*$ is unique, then, by Proposition \ref{prop xi does not explore}, the optimal randomized stopping time $\xi^*$ is unique as well.
Therefore, $\xi^*$ is the unique limit point of any subsequence of $(\xi^\lambda)_\lambda$, so that the whole sequence converges to $\xi^*$.
\end{proof}

\begin{remark}
From the RL algorithmic design point of view, it is important to notice that Proposition \ref{proposition vanishing entropy Value} quantifies the error which is made when replacing the original OS problem \eqref{eq:value_classic_os} with the entropy regularized OS problem \eqref{eq value singular control problem}.  
\end{remark}
\begin{remark}\label{remark vanishing entropy limits boundary}
Regarding Propositions \ref{proposition vanishing entropy Value} and \ref{proposition vanishing entropy Control}, one could try to investigate the convergence of the optimal strategies in terms of their characterization.
More precisely, we will prove in Theorem \ref{theorem optimal control} below that the optimal singular control of the entropy-regularized OS problem is of reflection type, 
while an optimal stopping time for the OS problem is (under suitable assumptions) an hitting time $\tau^* = \inf\{ t \geq 0 \, |\, V(X^x_t) = G(X^x_t) \}$.
Thus, it is natural to question whether, and in what sense, the reflecting boundary of the entropy-regularized OS problem \eqref{eq value singular control problem} converges to the optimal stopping boundary of the original OS problem  \eqref{eq:value_classic_os}.
Although establishing this result at a general level may be challenging, a positive answer is provided in our companion paper \cite{dianetti2026reinforcement}, in which we study a classical example of an OS problem arising from real option evaluation.
Moreover,  this question is further addressed in Section \ref{subsection More on the vanishing entropy limit}.
\end{remark}

\section{Analysis of the entropy regularized OS problem }\label{section DPP approach}
{In this section we study the entropy regularized OS problem, obtaining a characterization of the value function in terms of a related HJB equation (see Theorem \ref{theorem value function}) and a characterization of the optimal control as a reflecting strategy (see Theorem \ref{theorem optimal control}). 
We then show further relations with the original OS problem, deriving a formula for the minimal optimal stopping time in terms of the reflecting strategy (see Theorem \ref{theorem optimal stopping boundary representation}) and a further vanishing entropy limit (see Theorem \ref{theorem convergence of modified strategies}).
}

\subsection{Solving the entropy regularized OS problem via dynamic programming}\label{subsection DPP approach}
To approach  problem \eqref{eq value singular control problem} via DPP we introduce, for $y \in[0,1]$, the additional controlled state process
$ 
Y_{t}^{y,\xi} = y-\xi_{t}
$
and the set of admissible controls 
\begin{eqnarray}\label{eq:admissible_set_y}
    \A(y) : = \{ \xi \in \A(1) \text{ with } \xi_{t} \in[0, y] \text{ a.s.\ for any } t \geq 0\}.
\end{eqnarray}
Thus, for $(x,y) \in \mathbb R ^n \times [0,1]$ and $\xi \in \A (y)$, define the profit functional
$$
\begin{aligned}
J^\lambda (x, y;\xi) & :=  \mathbb{E} \left[ \int_0^{\infty}  e^{- \rho t} \big( ( \pi (X^x_{t} )Y_{t}^{y,\xi}-\lambda Y_{t}^{y,\xi} \log  (Y_{t}^{y,\xi} ) \big) \dd t 
+  \int_0^{\infty}  e^{- \rho t} G(X^x_t) \dd\xi_t \right],
\end{aligned} 
$$
subject to 
\begin{eqnarray}
    \dd X^x_{t} &=&b(X^x_{t}) \dd t+\sigma(X^x_{t}) \dd W_{t}, \quad X^x_{0}=x, \label{eq:x-process}\\
 \dd Y_t^{y,\xi} &=& -\dd\xi_t, \quad Y^{y,\xi}_{0-} =y.\label{eq:y-process}
\end{eqnarray}

We then consider the optimization problem
\begin{equation}\label{eq:singular_value}
    V^\lambda (x, y):=\sup _{\xi \in \A(y)}  J^\lambda (x, y;\xi),
\end{equation}
and we notice that $ V^\lambda (x) = V^\lambda(x, 1)$.

In order to study the entropy regularized problem, we introduce the following requirements, which imply Assumption \ref{assumption first}. 
\begin{assumption}
    \label{assumption general}
    The following conditions hold true: 
    \begin{enumerate}
    \item\label{condition b sigma} There exists a constant $L>0$ such that, for $\phi = b, \sigma$, we have
    $$ \begin{aligned}
         | \phi (x)| & \leq L (1+|x|),\\ 
        |\phi (\bar x) - \phi (x)|  & \leq L |\bar x - x|, \\
        | \phi ( \delta \bar x + (1-\delta)x) - \delta \phi(\bar x) - (1-\delta) \phi ( x) |  &\leq  L \delta (1-\delta)  |\bar{x}-x|^2,
    \end{aligned}
    $$
    for any $x, \bar x \in \mathbb R ^n$, $\delta \in [0,1]$. 
    \item\label{condition costs} There exist $p\geq 2$ and $K>0$ such that, for $\phi = G, \pi$, we have
    $$
    \begin{aligned}
    |\phi(x)| &\leq K (1+|x|^p), \\
    |\phi (\bar x) - \phi (x) | &\leq K (1+|\bar x|^{p-1} + |x|^{p-1})|\bar x -x |, \\
    |\phi ( \delta \bar x + (1-\delta)x) - \delta \phi(\bar x) - (1-\delta) \phi ( x) | &\leq  K \delta (1-\delta) (1+|\bar x|^{p-2} + |x|^{p-2}) |\bar{x}-x|^2, 
    \end{aligned}
    $$
    for any $x, \bar x \in \mathbb R ^n$, $\delta \in [0,1]$. 
    \item\label{codition ellipticity} The matrix $a(x):=\sigma^* \sigma (x) $ is uniformly elliptic; that is, there exists a constant $\kappa_\sigma >0$ such that 
    $$
    \sum_{i,j=1}^n a_{i,j} (x) z_i z_j \geq \kappa_\sigma |z|^2, \quad \text{for any $z \in \mathbb R ^n$.}
    $$
    \item\label{condition rho} $\rho$ is large enough: namely,
    %$\rho > \frac{\hat c _0 ( 4(p-1))}{2}, \hat c _1(2), {\hat c _1(4)} ,     \frac{\hat c _0 ( 2(p-1))+ \hat c _1(2)}{2},     \frac{\hat c _0(2p)}{2},    \hat c _0 (p),    \frac{\hat c _0 ( 2(p-1))+ \hat c _2(2)}{2},    \hat c _2 (2),\\    \frac{\hat c _2 (4)}2,     \frac{\hat c _1 (8)}{2},     \frac{\hat c _0 ( 4(p-2))}{2}    $
     $$
     \begin{aligned}
    \rho > \max \Big\{ & 
    \frac{\hat c _0 ( 4(p-1))}{2}, 
    \hat c _1(2), 
    {\hat c _1(4)} , 
    \frac{\hat c _0 ( 2(p-1))+ \hat c _1(2)}{2}, 
    \frac{\hat c _0(2p)}{2},
     \hat c _0 (p), \\
    & \qquad  \frac{\hat c _0 ( 2(p-1))+ \hat c _2(2)}{2}, 
    \hat c _2 (2),
    \frac{\hat c _2 (4)}2, 
    \frac{\hat c _1 (8)}{2}, 
    \frac{\hat c _0 ( 4(p-2))}{2} \Big\},
    \end{aligned}
    $$
    where, for generic $q \geq 0$ we define the constants
    \begin{equation}\label{eq definition constants}
    \begin{aligned}
        \hat c _0(q) &:= q \Big(\frac32 L + (q-1) L^2 \Big), \\
        \hat c _1(q) &:= q \Big(L+\frac{q-1}{2}L^2 \Big), \\
        \hat c _2 (q)& := {2 q\Big(L+\frac{2 q-1}{2} L^{2}\Big) }+ {(2 q-1) L+2(q-1)^{2} L^{2}}, 
    \end{aligned}
    \end{equation}
    which are related to the growth behavior of the underlying diffusion (see the estimates \eqref{eq estimate SDE growth}-\eqref{eq estimate SDE semiconv SUP} below).
    \end{enumerate}
\end{assumption}

The following theorem characterizes the value function of the entropy regularized OS problem.
The proof of the existence of a solution to the HJB equation is given in Subsection \ref{subsection proof V solves HJB}, while the proof of its uniqueness is given in Subsection \ref{subsection uniqueness HJB}.
\begin{theorem}\label{theorem value function}
The value function $V^\lambda$ is $C (\R^n \times [0,1]) \cap W_{loc}^{2, 2} \left(\mathbb{R}^{n} \times (0,1) \right)$, is concave in $y$, is such that $V^\lambda(x,y) \leq C (1+|x|^p)$ and it solves in the a.e.\ sense the HJB equation
$$
\max \big\{ \left(\mathcal{L}_{x} -\rho \right) V^\lambda(x, y) + \pi (x) y - \lambda y \log y, - V^\lambda_{y}(x, y) + G(x) \big\}=0, 
$$
with boundary condition $V^\lambda(x, 0)=0$.

{Moreover, the HJB equation admits a unique solution in the class of $ C (\R^n \times [0,1]) \cap W_{loc}^{2, 2} \left(\mathbb{R}^{n} \times (0,1) \right)$ functions which are concave in $y$ and such that $V(x,y) \leq C (1+|x|^p)$. 
}
\end{theorem}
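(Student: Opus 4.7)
The plan is to separate the statement into two parts: (i) existence, namely showing that $V^\lambda$ itself has all claimed properties and solves the HJB almost everywhere; and (ii) uniqueness of HJB solutions in the specified class. I expect most of the work to go into (i), with the regularity upgrade to $W^{2,2}_{loc}$ being the main obstacle.

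For the structural properties of $V^\lambda$, I would first collect the straightforward facts. The growth bound $V^\lambda(x,y)\le C(1+|x|^p)$ is obtained from Assumption \ref{assumption general} via the standard moment estimate $\E\big[\sup_{t\ge 0} e^{-\rho t}|X^x_t|^p\big]\le C(1+|x|^p)$, together with $0\le -y\log y\le e^{-1}$ uniformly on $[0,1]$. Concavity in $y$ is elementary: given $\xi^0\in\A(y_0)$ and $\xi^1\in\A(y_1)$ and $\alpha\in(0,1)$, the control $\alpha\xi^1+(1-\alpha)\xi^0$ lies in $\A(\alpha y_1+(1-\alpha)y_0)$, the induced $Y$-process is the corresponding convex combination, and strict concavity of $z\mapsto-\lambda z\log z$ yields the concavity of $J^\lambda(x,\cdot\,;\cdot)$, hence of $V^\lambda$ in $y$ after taking the supremum. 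Continuity in $x$ follows from Lipschitz estimates of $X^x$ in $x$ combined with the polynomial regularity of $\pi,G$; continuity in $y$ is then a consequence of monotonicity and concavity in $y$ on $[0,1]$.

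Next, I would derive the DPP in the standard form for singular control with finite fuel and deduce that $V^\lambda$ is a viscosity solution of the variational inequality; in particular $V^\lambda(x,0)=0$ holds because $\A(0)=\{0\}$. The obstacle nature of the problem comes from the fact that any admissible $\xi$ may jump at time $0$ by an amount $\Delta\in[0,y]$, giving the super-obstacle $V^\lambda(x,y)\ge V^\lambda(x,y-\Delta)+G(x)\Delta$ and hence $-V^\lambda_y+G\le 0$, while no-action over a small time interval yields the PDE obstacle. Once this viscosity characterisation is in place, I would upgrade the regularity: in the $x$-variable, I would establish semi-convexity estimates by convex combinations of controls using the second inequality in Assumption \ref{assumption general}(1) and (2), which together with the uniform ellipticity in (3) allows me to invoke standard interior regularity for elliptic obstacle problems to get $V^\lambda\in W^{2,2}_{loc}$ in $x$.

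The $y$-regularity is the genuine difficulty: concavity in $y$ only gives $W^{1,\infty}_{loc}$ a priori, and the variational inequality is degenerate in the $y$-direction. Here I would follow the probabilistic path hinted at by the authors: identify $V^\lambda_y$ (in the sense of left/right derivatives) with the value function of an auxiliary optimal stopping problem, classical in the singular control / OS correspondence (cf.\ Bather--Chernoff, Karatzas--Shreve, El Karoui--Karatzas). Lipschitz continuity of that auxiliary value function in $y$, combined with the $x$-regularity obtained above, would then deliver $V^\lambda\in W^{2,2}_{loc}(\R^n\times(0,1))$. This is the step I expect to be the most delicate, because the boundary $\{y=0\}$ and $\{y=1\}$ must be handled carefully, and the entropy term $-\lambda y\log y$ produces a singular right-hand side as $y\downarrow 0$.

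Finally, for uniqueness in the stated class, I would invoke a comparison principle for this obstacle problem. Given two solutions $V_1,V_2$ in the class, I would consider $V_1-V_2$ penalised by $\eps(1+|x|^{p'})$ for some $p'>p$, so that the difference attains its supremum on a compact set; at a maximum point, the structure of the variational inequality together with uniform ellipticity and concavity in $y$ yields a contradiction unless $V_1\le V_2$, and letting $\eps\downarrow 0$ concludes. The polynomial growth and concavity-in-$y$ assumptions are exactly what is needed to make this penalisation argument work, so the uniqueness class in the theorem is tight for this proof strategy.
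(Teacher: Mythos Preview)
Your overall decomposition into existence and uniqueness is sound, and you correctly identify the two main technical hurdles: the $W^{2,2}_{loc}$ upgrade (especially in $y$) and the OS connection for $V^\lambda_y$. However, your proposed route differs from the paper's in two substantive ways, and one of them contains a genuine gap.

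\textbf{Existence.} The paper does not pass through viscosity solutions. Instead it runs a \emph{penalisation scheme}: the gradient constraint $-V^\lambda_y+G\le 0$ is replaced by a smooth penalty $\tfrac{1}{\varepsilon}\beta(G-V^{\lambda,\varepsilon}_y)$, yielding a family of regular control problems whose value functions $V^{\lambda,\varepsilon}$ solve non-degenerate second-order PDEs. Uniform-in-$\varepsilon$ Lipschitz and semiconvexity estimates (your idea here is correct and matches the paper) are established on $V^{\lambda,\varepsilon}$, and the $W^{2,2}_{loc}$ bound on $D^2_xV^{\lambda,\varepsilon}$ comes from plugging the semiconvexity lower bound back into the penalised PDE and invoking interior Schauder/Calder\'on--Zygmund estimates. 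Only then does one pass to the limit $\varepsilon\to 0$. Your phrase ``standard interior regularity for elliptic obstacle problems'' is where this matters: the present problem is a \emph{gradient-constraint} (thin obstacle) problem in a degenerate direction, not a classical obstacle problem, so the off-the-shelf regularity theory you invoke does not apply directly. The penalisation route sidesteps this. Your identification of the OS representation for $V^\lambda_y$ is exactly what the paper uses (its Step~5) to get Lipschitzianity of $V^\lambda_y$ in both variables; this part of your plan is on target.

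\textbf{Uniqueness.} Here your proposal diverges more seriously. The paper does \emph{not} prove a comparison principle; it proves uniqueness by a \emph{verification argument}. Given any solution $\tilde V$ in the class, one applies a weak It\^o formula to $e^{-\rho t}\tilde V(X^x_t,Y^{y,\xi}_t)$ for an arbitrary $\xi\in\A(y)$, uses the HJB inequalities and concavity in $y$ to control the jump terms, and obtains $J^\lambda(x,y;\xi)\le \tilde V(x,y)$, hence $V^\lambda\le\tilde V$. For the reverse inequality one \emph{constructs} the reflecting control $\xi^\lambda_t=\sup_{s\le t}(y-\tilde g_\lambda(X^x_s))^+$ from the free boundary $\tilde g_\lambda$ associated with $\tilde V$, and shows by the same It\^o computation (now with equalities along the Skorokhod-reflected trajectory) that $J^\lambda(x,y;\xi^\lambda)=\tilde V(x,y)$. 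This simultaneously proves uniqueness and delivers Theorem~\ref{theorem optimal control}. Your maximum-principle sketch is problematic because the equation is fully degenerate in $y$: at an interior maximum of $V_1-V_2$ you get $\partial_y V_1=\partial_y V_2$, but if both lie on the obstacle $\partial_y V_i=G$ this gives no information on the values themselves, and there is no second-order term in $y$ to exploit. One can sometimes rescue such arguments by a careful case analysis together with the boundary condition at $y=0$ and concavity, but it is not ``standard'' and your outline does not indicate how you would close it. The verification route is both cleaner and more informative here.
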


We next move to the characterization of the optimal control.
Define the free boundary function $g_\lambda : \R^n \to [0,1]$ by
\begin{equation}\label{eq def free boundary}
    g_\lambda (x) := \sup \big\{ y \in[0,1]  \, | \, - V^\lambda_y (x,y) +  G(x) < 0 \big\},
\end{equation}
where we set $g_\lambda (x):=1$ if $\{ y \in[0,1] \, | \, - V^\lambda_y (x,y) +  G(x) < 0 \} = \emptyset$.
Notice that the function $g_\lambda$ is well defined by the concavity of $V^\lambda$, and that it is upper semi-continuous because of the continuity of $-V_y^\lambda + G$. 
{Intuitively, the function $g_\lambda$ separates the ``exploration region'' 
\begin{equation}
    \label{eq exploration region}
    \mathcal E_\lambda := \{ (x,y) \, | \, - V^\lambda_y (x,y) +  G(x) < 0 \},
\end{equation}
 in which no action is  required, from the ``stopping region'' 
 \begin{equation}
    \label{eq stopping region}
\mathcal S _\lambda := \{ (x,y) \, | \, - V^\lambda_y (x,y) +  G(x) = 0 \},
\end{equation}
in which the agent has to act.}

\begin{remark}
    We underline that, while the free boundary of the original stopping problem can be (only) locally expressed as a graph of function, the regularized free boundary becomes the graph of the  global function $g_\lambda$ mapping the state variable $x \in \R^n$ of the original stopping  problem into the probability of stopping $y \in [0,1]$.
\end{remark}

The following theorem characterizes the optimal control in terms of the function $g_\lambda$.
Its proof can be essentially found in the Step 2 in Subsection \ref{subsection uniqueness HJB} and it  is briefly summarized in Subsection \ref{subsection proof optimal control}.

\begin{theorem}\label{theorem optimal control}
For any $(x,y) \in \R ^n \times [0,1]$, there exists a unique optimal control $\xi^\lambda \in \mathcal A (y)$, which is given by the reflection policy at the boundary $g_\lambda$; that is,
$$
\xi^\lambda_t := \sup_{s\leq t } \big( y- g_\lambda (X^x_s) \big)^+, \quad t\geq 0, \quad \xi^\lambda_{0-} = 0.
$$
%For any $(x,y)$, there exists a unique optimal control $\xi^{*} \in \mathcal A (y)$, and $\xi^*$ is such that $\mathbb{P} \otimes d t$-a.e.,that
%$$(\mathcal L _{x}-\rho) V^\lambda (X_{t}, Y_{t}^{\xi^{*}})+\pi (X_{t}) Y_{t}^{\xi^{*}}-\lambda Y_{t}^{\xi^{*}} \log (Y_{t}^{\xi*})=0,$$
%and that $\mathbb{P}$-a.s., $\forall t \geqslant 0 t$, 
%$$ \xi_{t}^{*}=\int_{0}^{t} \mathds{1}_{ \{ V^\lambda_{y} (X_{s}, Y_{s}^{\xi^*} ) =G (X_{s}) \}} d \xi_{s}^{*}. $$
\end{theorem}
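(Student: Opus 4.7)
The plan is to run a verification argument using the HJB characterization of $V^\lambda$ obtained in Theorem \ref{theorem value function}, together with a generalized It\^o formula adapted to the $W^{2,2}_{loc}$-regularity. In fact, the bulk of the work was already carried out when establishing uniqueness in the HJB equation, so I would recycle that calculation and just point out that it identifies $\xi^\lambda$ as the unique optimizer.

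\textbf{Step 1 (admissibility of the candidate).} Since $V^\lambda(x,\cdot)$ is concave, the map $y\mapsto -V^\lambda_y(x,y)+G(x)$ is nondecreasing, so $g_\lambda$ from \eqref{eq def free boundary} is Borel and separates the regions $\mathcal E_\lambda$ and $\mathcal S_\lambda$. Together with continuity of $X^x$, this makes $\xi^\lambda_t:=\sup_{s\le t}(y-g_\lambda(X^x_s))^+$ a well-defined, nondecreasing, c\`adl\`ag, $[0,y]$-valued, $\mathbb F^W$-adapted process, hence $\xi^\lambda\in\mathcal A(y)$. By construction, $Y^{y,\xi^\lambda}_t\le g_\lambda(X^x_t)$ for all $t\ge 0$, and $\xi^\lambda$ increases only on the contact set $\{Y^{y,\xi^\lambda}_t=g_\lambda(X^x_t)\}$; any possible initial jump $\Delta\xi^\lambda_0=(y-g_\lambda(x))^+$ moves $Y$ across the interval where the obstacle is active.

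\textbf{Step 2 (upper bound for arbitrary $\xi\in\mathcal A(y)$).} Apply a Krylov-type It\^o formula for $W^{2,2}_{loc}$ functions to $e^{-\rho t}V^\lambda(X^x_t,Y^{y,\xi}_t)$ on $[0,T]$ (justified by a mollification of $V^\lambda$ in $x$, then passage to the limit using local Sobolev convergence and the polynomial growth bound from Theorem \ref{theorem value function}). Decomposing $\xi=\xi^c+\xi^d$ and using concavity of $V^\lambda$ in $y$ for the jump contributions,
$$
V^\lambda(x,Y_{t-}-\Delta\xi_t)-V^\lambda(x,Y_{t-})\le -V^\lambda_y(x,Y_{t-})\,\Delta\xi_t\le -G(x)\,\Delta\xi_t,
$$
we obtain, combining with both HJB inequalities $(\mathcal L_x-\rho)V^\lambda+\pi y-\lambda y\log y\le 0$ and $V^\lambda_y\ge G$,
$$
V^\lambda(x,y)\;\ge\;\mathbb E\!\left[\int_0^T e^{-\rho t}\big(\pi(X^x_t)Y^{y,\xi}_t-\lambda Y^{y,\xi}_t\log Y^{y,\xi}_t\big)\,\dd t+\int_0^T e^{-\rho t}G(X^x_t)\,\dd\xi_t+e^{-\rho T}V^\lambda(X^x_T,Y^{y,\xi}_T)\right],
$$
after localizing the stochastic integral via standard moment estimates. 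Letting $T\to\infty$, the transversality $e^{-\rho T}\mathbb E[V^\lambda(X^x_T,Y^{y,\xi}_T)]\to 0$ follows from the polynomial-in-$x$ growth of $V^\lambda$, the moment bounds \eqref{eq estimate SDE growth} on $X^x$, and Assumption \ref{assumption general}\ref{condition rho}. Thus $V^\lambda(x,y)\ge J^\lambda(x,y;\xi)$ for every $\xi\in\mathcal A(y)$.

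\textbf{Step 3 (equality for $\xi=\xi^\lambda$).} Along the reflection policy the process $(X^x_t,Y^{y,\xi^\lambda}_t)$ stays in $\overline{\mathcal E_\lambda}$, where the HJB PDE equality holds a.e., while on the support of $\dd\xi^\lambda$ one has $Y^{y,\xi^\lambda}=g_\lambda(X^x)$ and the obstacle equality $V^\lambda_y=G$ holds. The same is true across the initial jump, which moves $Y$ through an interval on which $V^\lambda_y=G$ by the obstacle equality on $\mathcal S_\lambda$. All the inequalities in Step 2 then become equalities, giving $V^\lambda(x,y)=J^\lambda(x,y;\xi^\lambda)$, so $\xi^\lambda$ is optimal.

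\textbf{Step 4 (uniqueness).} The functional $\xi\mapsto J^\lambda(x,y;\xi)$ is strictly concave because of the strictly concave entropy term $-\lambda z\log z$ in the integrand, which was already the key ingredient in Remark \ref{remark existence of optimal control}. Strict concavity implies that any two optimizers must coincide $\mathbb P\otimes\dd t$-a.e., giving uniqueness.

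The principal technical obstacle is Step 2: rigorously justifying the generalized It\^o formula for a $W^{2,2}_{loc}$ function against a singular driver with possibly arbitrary jumps, and then sending $T\to\infty$. The jump bookkeeping is controlled by concavity in $y$, which is stable under mollification in $x$, so the mollify-then-pass-to-the-limit strategy indicated above goes through; the integrability at infinity is handled by the polynomial growth of $V^\lambda$ combined with the largeness condition on $\rho$ in Assumption \ref{assumption general}\ref{condition rho}.
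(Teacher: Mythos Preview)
Your proposal is correct and follows essentially the same verification strategy as the paper: the paper's own proof (Subsection \ref{subsection proof optimal control}) simply points back to Step 2 of the uniqueness argument in Subsection \ref{subsection uniqueness HJB}, which is precisely your Steps 1 and 3, while the upper bound in your Step 2 is Step 1 of that same subsection, and uniqueness (your Step 4) is exactly Remark \ref{remark existence of optimal control}. The only cosmetic differences are that the paper invokes a ready-made generalized It\^o formula (citing \cite{fleming2006controlled} and \cite{angelis2019solvable}) rather than a mollify-and-pass-to-the-limit argument, and it explicitly uses the continuity of $V^\lambda_y$ (from Step 5 in Subsection \ref{subsection proof V solves HJB}) to obtain lower semi-continuity of $g_\lambda$, which is what makes the right-continuity of $\xi^\lambda$ transparent.
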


A few comments are in order.
\begin{remark}[Optimal control and the related Skorokhod problem]
Notice that the optimal control $\xi^{\lambda}$ is the control with minimal total variation such that
\begin{equation*}
( X^x_{t}, Y_{t}^{y, \xi^{\lambda}}) \in \overline{\mathcal E_\lambda}, \quad \text{for any $t \geq 0, \ \P$-a.s.,}    
\end{equation*}
where $ \overline{\mathcal E_\lambda}$ is the closure of ${\mathcal  E_\lambda}$.
Such a process is also known as the solution to the Skorokhod reflection problem for the underlying state process in the waiting region ${\mathcal  E_\lambda}$. 
Generally, characterizing the optimal singular control in terms of the related Skorokhod problem represents a challenging open problem in singular control theory (we refer to the discussion in \cite{dianetti2021multidimensional} for further details).
In our setting, the explicit characterization of $\xi^\lambda$ (as in Theorem \ref{theorem optimal control}) is possible due to the fact that $X^x$ is uncontrolled and the controls are one-dimensional.
\end{remark}

\begin{remark}[Exploratory behavior of optimal controls]\label{remark the optimal control randomizes}
Unlike the OS problem with exploratory strategies but without modification on the objective function (see Proposition \ref{prop xi does not explore} and the related Remark \ref{remark xi does not explore}), Theorem \ref{theorem optimal control} clarifies that the optimal control of the entropy regularized OS problem is no longer a strict stopping time. 
In other words, introducing entropy regularization leads to optimal strategies of an exploratory nature—more precisely, strategies that are exclusively exploratory.

From an RL perspective, the exploratory behavior of the optimal control is crucial, as it allows for simultaneous optimization and information gathering from the environment. Specifically, once the state process reaches the boundary of the exploration region, not only is action required, but, given the regularity of the free boundary $g_\lambda$, an infinite number of infinitesimal actions are necessary to prevent the state process from entering the stopping region. This enables the agent to continuously gather information on the optimality of selecting a particular reflecting boundary in a non-episodic manner.
\end{remark}

\subsection{More on the vanishing entropy limit}
\label{subsection More on the vanishing entropy limit}
In this subsection we discuss some further results on the connection between our original OS problem and its entropy regularized version. 

We begin by recalling from \eqref{eq exploration region} and \eqref{eq stopping region} the definitions of exploration region $\mathcal E_\lambda := \{ (x,y) \in \R^n \times [0,1] \, | \, - V^\lambda_y (x,y) +  G(x) < 0 \}$  and stopping region $\mathcal S _\lambda = \{ (x,y) \in \R^n \times [0,1]\, | \, - V^\lambda_y (x,y) +  G(x) = 0 \}$.
For any $y \in [0,1]$, define the sets 
$$
\begin{aligned}    
\mathcal E_\lambda (y) &: = \{ x \in \R^n \, | \, - V^\lambda_y (x,y) +  G(x) < 0 \}, \\
\mathcal S _\lambda (y)& := \{ x \in \R^n \, | \, - V^\lambda_y (x,y) +  G(x) = 0 \},
\end{aligned}
$$
and notice that, since $-V_y^\lambda + G \leq 0$ (cf.\  Theorem \ref{theorem value function}), one has $\mathcal S _\lambda (y) = \R^n \setminus \mathcal E_\lambda (y)$. 
Following the arguments in \cite{Karatzas&Shreve84} (see in particular Theorem 3.4) we can show that, for any $(x, y) \in \mathbb{R}^{n} \times(0,1]$, one has the representation
\begin{equation}\label{eq conncection OS}
V^\lambda_y (x, y) =\sup _{\tau \in \mathcal T} \mathbb{E}\left[\int_{0}^{\tau} e ^{-\rho t} \left( \pi\left(X^x_{t}\right) - \lambda (1+\log y) \right) \dd t+e^{-\rho {\tau}} G\left(X^x_{\tau}\right)\right].
\end{equation}
From this representation, we derive the following monotonicity property of the exploration region. 

\begin{lemma}
    \label{lemma monotonicity boundary} 
    Under Assumption \ref{assumption general}, for $0\leq \lambda \leq \bar \lambda$, we have that:
    \begin{enumerate}
        \item If $y > e^{-1}$, then  $\mathcal E_{\bar \lambda} (y) \subseteq   \mathcal E_{ \lambda} (y)$;
        \item If $y=e^{-1}$, then $\mathcal E_{\bar \lambda}(y) = \mathcal E_{ \lambda} (y) $;
        \item If $ y<e^{-1}$, then $ \mathcal E_{ \lambda} (y) \subseteq   \mathcal E_{\bar \lambda} (y) $.
    \end{enumerate}
\end{lemma}
\begin{proof}
We begin by proving the first claim.
When $y > e^{-1}$, we have $1+\log y >0$, hence $ - \bar \lambda (1+\log y) <  - \lambda (1+\log y)$. 
Thanks to the representation \eqref{eq conncection OS}, we obtain that 
$$
V_y^{\bar \lambda} (x,y) \leq V_y^{ \lambda} (x,y), \quad \text{for all $x \in \R ^n$ and $y > e^{-1}$.}
$$
Hence, from the definition of $\mathcal E _\lambda(y)$ we find  $\mathcal E_{\bar \lambda} (y) \subseteq   \mathcal E_{ \lambda} (y)$. 

For $y = e^{-1}$, we have $1+\log y = 0$, hence $\mathcal E _\lambda(y)$ does not depends on $\lambda$, proving the second claim. 
The third claim can be proved analogously. 
\end{proof}

For $\lambda \geq 0$, denote by $\tau^\lambda (x,y)$ the minimal optimal stopping time of the problem \eqref{eq conncection OS}.
Under Assumption \ref{assumption general},  Theorem 2.3.5 in \cite{lamberton.2009optimal} characterizes  the minimal stopping time as the first hitting time to the region  ${\mathcal S_\lambda}$; i.e., $\tau^\lambda (x,y) = \inf \{ t \, | \, X_t^x \in \mathcal S _\lambda (y) \} $. 
Since $\tau^0 (x,y)$ does not depend on $y$, it will be simply denoted by $\tau^0 (x)$.

\begin{lemma}
    \label{lemma limits stopping times}
     Under Assumption \ref{assumption general}, for any $x \in \R ^n$, we have $\tau^\lambda (x,1) \to \tau ^0(x)$, $\P$-a.s., as $\lambda \to 0$.
\end{lemma}
\begin{proof}
    Thanks to Lemma \ref{lemma monotonicity boundary}, 
    for $0\leq \lambda \leq \bar \lambda$, we have that
     $\mathcal E_{\bar \lambda} (1) \subseteq   \mathcal E_{ \lambda} (1)$. 
     This implies that $\mathcal S_{ \lambda} (1) \subseteq   \mathcal S_{\bar \lambda} (1)$, from which we deduce that
     $$
     \tau^{\lambda} (x,1) = \inf \{ t \, | \, X_t^x \in \mathcal S _\lambda (1) \} \geq \inf \{ t \, | \, X_t^x \in \mathcal S _{\bar \lambda} (1) \} = \tau^{\bar \lambda} (x,1).
     $$
 Therefore, the family $\tau^\lambda (x,1)$ is nonincreasing in $\lambda$.
 Using this monotonicity, we can define a limiting stopping time 
 $$
 \tau ^*(x):= \sup_{\lambda >0} \tau^\lambda (x,1) = \lim _{\lambda \to 0} \tau^\lambda (x,1),
 $$
 and we notice that $\tau^0(x) \geq \tau^* (x)$.

Next, by optimality of $\tau^\lambda (x,1)$, we have
$$
\begin{aligned}
    &\mathbb{E}\left[\int_{0}^{\tau^\lambda (x,1)} e ^{-\rho t} \left( \pi\left(X^x_{t}\right) - \lambda (1+\log y) \right) \dd t+e^{-\rho {\tau^\lambda (x,1)}} G\left(X^x_{\tau^\lambda (x,1)}\right)\right] \\
    & \quad \leq  \mathbb{E}\left[\int_{0}^{\tau} e ^{-\rho t} \left( \pi\left(X^x_{t}\right) - \lambda (1+\log y) \right) \dd t+e^{-\rho {\tau}} G\left(X^x_{\tau}\right)\right], \quad \text{for any $\tau \in \mathcal T$.}
\end{aligned}
$$
Taking limits as $\lambda \to 0$ in the latter inequality (using the growth conditions in Assumption \ref{assumption general} and the dominated convergence theorem), we deduce that
$$
\begin{aligned}
    &\mathbb{E}\left[\int_{0}^{\tau^* (x)} e ^{-\rho t}  \pi\left(X^x_{t}\right)  \dd t+e^{-\rho {\tau^* (x)}} G\left(X^x_{\tau^* (x)}\right)\right] \\
    & \quad \leq  \mathbb{E}\left[\int_{0}^{\tau} e ^{-\rho t}   \pi\left(X^x_{t}\right) \dd t+e^{-\rho {\tau}} G\left(X^x_{\tau}\right)\right], \quad \text{for any $\tau \in \mathcal T$.}
\end{aligned}
$$
Hence, $\tau^*(x)$ is also optimal. Thus, since   $\tau^0(x)$ is the minimal optimal stopping time and $\tau^0(x) \geq \tau^* (x)$, we conclude that $\tau^*(x) = \tau^0(x)$. 
\end{proof} 

For the rest of this subsection, we fix the initial condition of the problem to $(x,1)$, for some $x \in \R^d$.
Recall  the definition of optimally reflecting strategy  $\xi^\lambda$ from Theorem \ref{theorem optimal control}, and define an associated stopping time
$$
\hat \tau ^\lambda (x,1) := \inf \{ t \, |\, \xi^\lambda_t \geq  1- e^{-1} \}.
$$
The results in the previous subsection now allows to obtain the following characterization, revealing that $\hat \tau ^\lambda (x,1)$ is actually independent of $\lambda$.
\begin{theorem}
    \label{theorem optimal stopping boundary representation}
   Under Assumption \ref{assumption general}, for any $\lambda \geq 0$, we have $
\hat \tau ^\lambda (x,1) = \tau ^0(x).
$
\end{theorem}
\begin{proof}
Given the characterization of $\xi^\lambda$ of Theorem \ref{theorem optimal control} and the definition of $\hat \tau ^\lambda (x,1)$, we have
$$
\hat \tau ^\lambda (x,1) =  \inf \{ t \, |\, X^x_t \in \mathcal S _\lambda (e^{-1}) \}.
$$
By Lemma \ref{lemma monotonicity boundary}, we have $\mathcal S _\lambda (e^{-1}) = \mathcal S _0 (e^{-1})$, so that
$$
\hat \tau ^\lambda (x,1) 
=  \inf \{ t \, |\, X^x_t \in \mathcal S _\lambda (e^{-1}) \}
= \inf \{ t \, |\, X^x_t \in \mathcal S _0 (e^{-1}) \}
= \tau^0(x),
$$
which completes the proof.
\end{proof}
\begin{remark}
From an algorithmic point of view, such a representation has a clear advantage: 
If we were able to learn $\xi^\lambda$ correctly, we would be able to find the original optimal stopping time $\tau^0(x)$ simply by the expression $\tau^0(x)= \inf \{ t \, |\, \xi^\lambda_t \geq 1-e^{-1} \}$.
We also notice that, in such an expression, the dependence on the threshold $e^{-1}$  is a direct consequence of our choice of the entropy function $y \mapsto -y \log y$.
\end{remark}

We next introduce the following family of reflecting strategies
\begin{equation*}
    \hat \xi ^\lambda_t := \xi^\lambda_t \land (1-e^{-1}) + (1-e^{-1}) \mathds 1 _{[\hat \tau ^0(x), \infty)} (t).
\end{equation*} 
In light of the Theorem \ref{theorem optimal stopping boundary representation}, we have
$\hat \xi ^\lambda_t = \xi^\lambda_t \land (1-e^{-1}) + (1-e^{-1}) \mathds 1 _{[\hat \tau ^\lambda(x,1), \infty)} (t)$. This expression characterizes $\hat \xi^\lambda$ as a reflecting process at the boundary $g_\lambda$ until the threshold $e^{-1}$, with a jump to $0$ at the first hitting time of the threshold. 

For the family $\hat \xi ^\lambda$, we can prove a general convergence result, which improves our previous Proposition \ref{proposition vanishing entropy Control}.
\begin{theorem}
    \label{theorem convergence of modified strategies}
    Under Assumption \ref{assumption general}, for any $x \in \R ^n$, we have $\hat \xi^\lambda_t  \to \xi ^0_t := \mathds 1 _{[\tau ^0(x), \infty)} (t) $, $\P$-a.s., as $\lambda \to 0$.
\end{theorem}
\begin{proof}
    By the reflecting property of the strategy $\xi^\lambda$, the first time at which $\hat \xi ^\lambda$ increases is $\tau ^\lambda (x,1)$, so that we have
    $$
   \xi ^0_t := \mathds 1 _{[\tau ^0(x), \infty)} (t)  \leq \hat \xi^\lambda_t \leq \ \mathds 1 _{[\tau ^\lambda(x,1), \infty)} (t). 
    $$
    Therefore, we conclude the proof by using Lemma \ref{lemma limits stopping times} in order to take limits as $\lambda \to 0$ in the latter inequalities.
\end{proof}

%\subsection{Boundary and strategy convergence}
\begin{comment}
\begin{theorem}
    \label{theorem boundary convergence}
    When $G \in C^3$,  $\hat{ \pi} :=\pi +(\mathcal L _x - \rho)G$ is $C^1$ and $\hat {\pi} _{x_n} >0$, then the boundary converges as $\lambda \to 0$. 
\end{theorem}

\begin{theorem}
\label{theorem strategy convergence improved}
    In the assumptions of Theorem \ref{theorem boundary convergence}, for any $x \in \R ^n$, we have $ \xi^\lambda_t  \to \xi ^0_t := \mathds 1 _{[\tau ^0(x), \infty)} (t) $, $\P$-a.s., as $\lambda \to 0$.
\end{theorem}
\begin{proof}
    \jo{Need for Lipschitianity of the boundary of the OS problem \eqref{eq conncection OS}}
\end{proof}
\end{comment}

\subsection{Preliminary estimates}
Before diving into the proof of Theorem \ref{theorem value function}, we discuss some estimates that will be used several times in the sequel.

First, for a generic $q \geq 1$ and $ \hat c _0(q),  \hat c _1(q),  \hat c _2(q)$ as in \eqref{eq definition constants}, we have the estimates
\begin{align}
\label{eq estimate SDE growth} 
\mathbb{E}[ |  X ^x_t |^{q} ]  & \leq C (1+|x|^q) e^{  \hat c _0(q) t}, \quad q\geq 1,   \\ 
\label{eq estimate SDE Lip} \mathbb{E}[ |  X^{\bar x}_t - X ^x_t |^{q} ] & \leq |\bar{x} - x|^q e^{  \hat c _1(q) t} , \quad q\geq 2, \\ 
 \label{eq estimate SDE semiconv} 
 \mathbb{E} \big[  \big|  \delta X^{\bar x}_t + (1-\delta) X^x_t - X ^{\delta \bar x + (1-\delta) x}_t \big|^q \big] & \leq C \delta (1-\delta) |\bar{x} - x|^{2q} e^{  \hat c _2(q) t}, \quad q \geq 2,
\end{align}
for any $x,\bar x \in \mathbb R ^n, \ \delta \in [0,1]$.
Even if these estimates are obtained through standard stochastic calculus techniques, it was difficult for us to find a reference in which the exact orders of growth $\hat c_0, \hat c_1, \hat c_2$ appear (importantly, these constants are those appearing in our Condition \ref{condition rho} in Assumption \ref{assumption general}). 
For this reason, we provide a proof in Appendix \ref{appendix proof of estimates}.

When $\rho$ is large enough (as in Condition \ref{condition rho} of Assumption \ref{assumption general}), 
the estimates above together with the Burkholder-Davis-Gundy inequality imply that
\begin{align}
\label{eq estimate SDE growth SUP} 
\mathbb{E}\bigg[ \sup_{t \leq \tau}  e^{-\rho t} |  X ^x_t |^{q} \bigg]  & \leq C (1+|x|^q), \quad q = 2(p-1),p,2(p-2),  \\ 
\label{eq estimate SDE Lip SUP} 
\mathbb{E} \bigg[ \sup_{t \leq \tau}  e^{-\rho t} |  X^{\bar x}_t - X ^x_t |^{q} \bigg] & \leq C |\bar{x} - x|^q , \quad q= 2, 4  \\ 
 \label{eq estimate SDE semiconv SUP} 
 \mathbb{E} \bigg[ \sup_{t \leq \tau}  e^{-\rho t} \big|  \delta X^{\bar x}_t + (1-\delta) X^x_t - X ^{\delta \bar x + (1-\delta) x}_t \big|^2 \bigg] & \leq C \delta (1-\delta) |\bar{x} - x|^4,
\end{align}
for any $x,\bar x \in \mathbb R ^n, \ \delta \in [0,1]$ and $\tau \in \mathcal T$. 
Again, we provide a proof in Appendix \ref{appendix proof of estimates}.

Finally, using the growth  of $\pi$ and $G$ in Condition \ref{condition costs} of Assumption \ref{assumption general}, as well as the estimates \eqref{eq estimate SDE growth} and \eqref{eq estimate SDE growth SUP}, we find
\begin{equation}\label{eq estimate V growth}
\begin{aligned}
V^\lambda (x,y) & \leq
\sup_{\xi \in \mathcal A (y)} 
 \mathbb E \bigg[ \int_0^{\infty} e^{-\rho t} |\Pi(X^x_t,Y^{y,\xi}_{t-}) |   \dd t      + \int_0^{\infty} e^{-\rho t} |G (X^x_t)| \dd \xi_t \bigg] \\    
& \leq \sup_{\xi \in \mathcal A (y)} 
 \mathbb E \bigg[  \int_0^{\infty} e^{-\rho t} (1+|X^x_t|^p)   \dd t     + \lim_{T\to \infty} \sup_{t \leq T } e^{-\rho t} (1+|X^x_t|^p) \xi_{ T}  \bigg] \\
 &\leq C (1+|x|^p),
\end{aligned}
\end{equation}
where we have used the fact that $\xi_t \leq y \leq 1$ for any $\xi \in \mathcal A (y)$.

\subsection{Proof of Theorem \ref{theorem value function}: solution to the HJB}\label{subsection proof V solves HJB}
The concavity of $V^\lambda$ follows by straightforward arguments (see Remark \ref{remark existence of optimal control} for a reference), while its growth rate was shown in \eqref{eq estimate V growth}.
Thus, we will only show in the sequel that $V^\lambda$ has the claimed regularity and it solves the HJB equation
in the almost everywhere sense.
The argument is based on a penalization scheme (see e.g.\ \cite{williams.chow.menaldi.1994regularity} among others) and on a connection with optimal stopping (see e.g.\ \cite{Karatzas&Shreve84}).

The proof is divided into five steps, where we will often use the notation $\Pi(x,y) := y \pi (x) -\lambda y \log y$.
\smallbreak\noindent
\emph{Step 1 (Penalization).} 
Let $\beta : \mathbb{R} \rightarrow[0, \infty)$ be a $C^{\infty}$ nondecreasing convex function such that
$$
\beta(z)= \begin{cases}0, & \text { if } z \leq 0, \\ {2z-1}, & \text { if } z \geqslant 1. \end{cases}
$$
For any $\varepsilon>0$, consider the control set $U _{\varepsilon}:= \big\{ (a^1, a^2) \in \mathbb R ^2  \, | \, sa^1 - a^2 \leq \frac{\beta(s)}{\varepsilon} \  \forall s \in \mathbb R $ and $ 0 \leq a^2 \leq \frac{1}{\varepsilon} \big\}$
and the functional
$$
\begin{aligned}
J^{\lambda, \varepsilon} (x,y;(u,\eta)) &:= \mathbb{E} \left[ \int_{0}^{\infty} e^{- \rho t}\left( \Pi\left( X^x_{t}, Y^{y,u}_{t}\right)-G(X^x_{t}) u_{t} - \eta_t \right) \dd t\right], \quad (u,\eta) \in \mathcal U _\varepsilon (y) \\ 
 \text{subject to}\quad \dd X^x_{t}&=b(X^x_{t}) \dd t+\sigma(X^x_{t}) \dd W_{t}, \quad X^x_0=x,  \\ 
\dd Y^{y,u}_t &= -u_t \dd t, \quad Y^{y,u}_0 =y, 
\end{aligned}
$$
where 
$
\text{$\mathcal U _{\varepsilon} (y):= \{ (u,\eta): \Omega \times[0, \infty) \rightarrow U _{\varepsilon}  \, | \, Y^{y,u}_{t} \geqslant 0 \ \forall t \geqslant 0, \mathbb P $-a.s. $\}$}.
$
For any $\varepsilon>0$, define the optimization problem
\begin{equation}\label{eq control problem penalized}
V^{\lambda, \varepsilon}(x, y) : = \sup _{(u,\eta) \in \mathcal U _{\varepsilon}(y)} J^{\lambda, \varepsilon}(x,y;(u,\eta)).
\end{equation}
Similarly to \eqref{eq estimate V growth}, we can find $C$ such that, for $y$ small enough, one has 
\begin{equation}
    \label{eq continuity of V lambda epsilon in 0}
    \begin{aligned}
    |V&^{\lambda, \varepsilon}(x,y)| \\
    &\leq \sup _{(u,\eta) \in \mathcal U _{\varepsilon}(y)}  \mathbb{E} \left[ \int_{0}^{\infty} e^{- \rho t}\left| \Pi\left( X^x_{t}, Y^{y,u}_{t}\right)-G(X^x_{t}) u_{t} - \eta_t \right| \dd t\right] \\
    &\leq  \sup _{(u,\eta) \in \mathcal U _{\varepsilon}(y)} \mathbb E \bigg[ \int_0^{\infty} e^{-\rho t} y(|\pi(X^x_t) |-\lambda  \log y) \dd t +  \lim_{T \to \infty}  \sup_{t \leq T} \big(e^{-\rho t}  |G(X_t^x)|\big) \int_0^T u _t \dd t  \bigg] \\
    & \leq C y \bigg( 
 \mathbb E \bigg[  \int_0^{\infty} e^{-\rho t} (1+|X^x_t|^p - \lambda \log y)   \dd t     + \lim_{T\to \infty} \sup_{t \leq T } e^{-\rho t} (1+|X^x_t|^p)  \bigg] \bigg)  \\
 & \leq C y (1 + |x|^p -\lambda \log y) \to 0, \quad\text{as $y\to 0$},
    \end{aligned}
\end{equation}
which implies the continuity of $V^{\lambda, \varepsilon}$ for $y=0$.

Standard arguments (see e.g. Chapter IV.10 in \cite{fleming.soner2006}) give that $V^{\lambda, \varepsilon}$ is a  solution  to the HJB equation in the a.e.\ sense; that is, $V^{\lambda, \varepsilon} \in C (\R^n \times [0,1]) \cap W_{loc}^{2, 2} \left(\mathbb{R}^{n} \times (0,1) \right)$ and
\begin{equation}\label{eq HJB penalized}
\rho V^{\lambda, \varepsilon}(x, y)-\mathcal{L}_{x} V^{\lambda, \varepsilon}(x, y)-\Pi(x, y)-\frac 1 {\varepsilon} \beta\left(G(x)-V_{y}^{\lambda, \varepsilon}(x, y)\right)=0,  \quad \text{a.e.\ in $\mathbb R ^n \times (0,1),$}
\end{equation}
with boundary condition $V^{\lambda, \varepsilon}(x,0)=0$.

Moreover, following the rationale of  the proof of Theorem 3.2 in \cite{williams.chow.menaldi.1994regularity}, one can show that
$V^\lambda (x, y)=\lim _{\varepsilon \rightarrow 0} V^{\lambda, \varepsilon}(x,y)$. 
%For $0<\bar{\varepsilon} \leqslant \varepsilon$, we have $\mathcal U_{\varepsilon} \subset \mathcal U_{\bar{\varepsilon}}$ and by \eqref{eq beta monotone} we find $B^{\bar{\varepsilon}}(x, u) \leq B^{\lambda, \varepsilon}(x, u)$ for any $(x, u) \in \mathbb{R} \times\left[0, \frac{1}{\varepsilon}\right]$, which in turn implies
%$$\begin{aligned}V^{\bar{\varepsilon}}(x, y)&= \sup _{(u,\eta) \in \mathcal U _{\bar{\varepsilon}} } J^{\bar{\varepsilon}} (x,y;(u,\eta))& \geqslant \sup _{(u,\eta) \in \mathcal U _{\varepsilon}} J^{\lambda, \varepsilon}(x,y;(u,\eta))=V^{\lambda, \varepsilon}(x, y).\end{aligned}$$

\smallbreak\noindent
\emph{Step 2 (Uniform Lipschitz estimates for $V^{\lambda, \varepsilon}$ in $x$).}
Take $x, \bar x \in \mathbb R^n$, $y \in (0,1]$, $\zeta >0$ and   $(u,\eta) \in \mathcal U _\varepsilon (y)$  such that $V^{\lambda, \varepsilon}(\bar x, y)  - \zeta 
\leq J^{\lambda, \varepsilon}(\bar x, y;u,\eta )$. 
Since $(u,\eta)$ is suboptimal for $(x,y)$, we have
\begin{equation}\label{eq Lipschit V x first}
\begin{aligned}
V^{\lambda, \varepsilon}(\bar x, y) &- V^{\lambda, \varepsilon}(x,y) - \zeta 
\leq J^{\lambda, \varepsilon}(\bar x, y;u,\eta ) - J^{\lambda, \varepsilon}(x,y;u,\eta) \\
 & = \mathbb E  \bigg[\int_0^\infty e ^{-\rho t} \big( Y^{y,u}_t( \pi (X_t^{\bar x}) - \pi (X_t^x)) + u_t ( G(X_t^{\bar x}) - G(X_t^x)) \big) \dd t \bigg].
\end{aligned}
\end{equation}
In order to continue this estimate, we notice that by monotone convergence theorem, we have
\begin{equation}\label{eq monotone for proof}
 \mathbb E  \bigg[ \int_0^\infty e ^{-\rho t}  u_t ( G(X_t^{\bar x}) - G(X_t^x)) \dd t \bigg] \leq \lim_{T \to \infty}
 \mathbb E  \bigg[ \int_0^T e ^{-\rho t}  u_t | G(X_t^{\bar x}) - G(X_t^x)| \dd t \bigg]. 
\end{equation}
Furthermore,  since $(u,\eta) \in \mathcal U _\varepsilon (y)$, we have $\int_0^t u_s \dd s \leq y \leq 1$. 
Thus, using Assumption \ref{assumption general} and Hölder inequality,  we can estimate the right-hand side of \eqref{eq monotone for proof} to obtain
$$
\begin{aligned}
\mathbb E  \bigg[ \int_0^T & e ^{-\rho t}  u_t | G(X_t^{\bar x}) - G(X_t^x)| \dd t \bigg] \\
\leq & \mathbb E  \bigg[ \sup_{t \leq T} \Big( e ^{-\rho t}  | G(X_t^{\bar x}) - G(X_t^x)| \Big)  \int_0^T u_t  \dd t \bigg] \\
 \leq &  C \mathbb{E} \bigg[ \sup_{t \leq T} \Big( e^{-\rho t} 
\big(1+ |  X ^{\bar x}_t |^{p-1} + | X_t^{x}  |^{p-1} \big) \big|  X ^{\bar x}_t- X^{x }_t \big| \Big) \bigg] \\
 \leq &  C   \Big( \mathbb E \Big[ \sup_{t \leq T} \Big( e ^{-\rho t} \big( 1 +   | X ^{\bar x }_t|^{2(p-1)}  +  | X ^{x }_t|^{2(p-1)} \big) \Big) \Big] \Big)^{\frac12} \Big( \mathbb E \Big[ \sup_{t \leq T} \big( e ^{-\rho t} |  X ^{\bar x}_t -  X ^{x}_t |^2 \big) \Big]  \Big)^{\frac12}.
\end{aligned}
$$
Finally, using the estimates  \eqref{eq estimate SDE growth SUP} and \eqref{eq estimate SDE Lip SUP}, we get
$$
\begin{aligned}
\mathbb E  \bigg[ \int_0^T & e ^{-\rho t}  u_t | G(X_t^{\bar x}) - G(X_t^x)| \dd t \bigg] \leq C (1 + |x|^{p-1} + |\bar x|^{p-1}) |\bar x - x|,
\end{aligned}
$$
which, thanks to \eqref{eq monotone for proof}, allows to  conclude that
\begin{equation}\label{eq estimate u G lip in x}
    \mathbb E  \bigg[ \int_0^\infty  e ^{-\rho t}  u_t ( G(X_t^{\bar x}) - G(X_t^x)) \dd t \bigg] \leq C  (1 + |x|^{p-1} + |\bar x|^{p-1})  |\bar x - x|.
\end{equation}

We next estimate the term in \eqref{eq Lipschit V x first}  which involves $\pi$. 
Since $Y^{y,u}_t \leq y \leq 1$,  using Assumption \ref{assumption general} and Hölder inequality  we find
$$
\begin{aligned}
 \mathbb E  \bigg[\int_0^\infty & e ^{-\rho t}  Y^{y,u}_t( \pi (X_t^{\bar x}) - \pi (X_t^x))  \dd t \bigg] \\
 \leq &  C \mathbb{E} \bigg[ \int_0^T e^{-\rho t} 
\big(1+ |  X ^{\bar x}_t |^{p-1} + | X_t^{x}  |^{p-1} \big) \big|  X ^{\bar x}_t- X^{x }_t \big| \dd t \bigg] \\
 \leq &    C   \int_0^T e ^{-\rho t} \Big( 1 + \big( \mathbb E \big[ | X ^{\bar x }_t|^{2(p-1)} \big] \big)^{\frac12} + \big( \mathbb E \big[ | X ^{x }_t|^{2(p-1)} \big] \big)^{\frac12}  \Big) \Big( \mathbb E  \big[ |  X ^{\bar x}_t -  X ^{x}_t |^2 \big]  \Big)^{\frac12}  \dd t.
\end{aligned}
$$
Finally, using the estimates in \eqref{eq estimate SDE growth} and \eqref{eq estimate SDE Lip}, we get
$$
\begin{aligned}
\mathbb E  \bigg[\int_0^\infty & e ^{-\rho t}  Y^{y,u}_t( \pi (X_t^{\bar x}) - \pi (X_t^x))  \dd t \bigg] \\
 \leq &   C (1 + |x|^{p-1} + |\bar x|^{p-1}) |\bar x - x| 
 \int_0^T e ^{ \big(-\rho  + \frac{\hat c _0 ( 2(p-1))}{2} + \frac{\hat c _1 (2)}{2} \big) t } \dd t.
\end{aligned}
$$
Hence, thanks to Condition \ref{condition rho} in Assumption \ref{assumption general}, we conclude that
\begin{equation}
    \label{eq estimate pi Lip in x}
    \mathbb E  \bigg[\int_0^\infty e ^{-\rho t}  Y^{y,u}_t( \pi (X_t^{\bar x}) - \pi (X_t^x))  \dd t \bigg] \leq C  (1 + |x|^{p-1} + |\bar x|^{p-1})  |\bar x - x|. 
\end{equation}

Plugging \eqref{eq estimate u G lip in x} and \eqref{eq estimate pi Lip in x} into \eqref{eq Lipschit V x first}, we obtain
$$
\begin{aligned}
V^{\lambda, \varepsilon}(\bar x, y) - V^{\lambda, \varepsilon}(x,y) - \zeta  
\leq  C  (1 + |x|^{p-1} + |\bar x|^{p-1})  |\bar x - x|,
\end{aligned}
$$
which, by the arbitrariness of $\zeta$,  gives the local Lipschitz property of $V^\varepsilon$ in $x$.

\smallbreak\noindent
\emph{Step 3 (Uniform Lipschitz estimates for $V^{\lambda, \varepsilon}$ in $y$).}
%We next show the Lipschitzianity in $y$.
Take $x \in \mathbb R^n$, $y, \bar y \in (0,1]$, $\zeta >0$ and  $(\bar u,\bar \eta) \in \mathcal U _\varepsilon (\bar y)$ such that $V^{\lambda, \varepsilon}( x, \bar y)  - \zeta 
\leq J^{\lambda, \varepsilon}( x, \bar y;\bar u, \bar \eta )$.
We will distinguish the two cases $\bar y \leq y$ and $\bar y >y$.

If $\bar y \leq y$, then we have $Y^{y,\bar u}_t \geq0$, so that $(\bar u, \bar \eta) \in \mathcal U _\varepsilon (y)$.
By suboptimality of $(\bar u, \bar \eta)$  for the initial condition $(x,y)$, we have
\begin{equation}\label{eq Lipschitz V y first}
\begin{aligned}
V^{\lambda, \varepsilon}( x, \bar y) &- V^{\lambda, \varepsilon}(x,y) - \zeta 
\leq J^{\lambda, \varepsilon}( x, \bar y;\bar u, \bar \eta ) - J^{\lambda, \varepsilon}(x,y; \bar u ,\bar \eta) \\
 & = \mathbb E  \bigg[\int_0^\infty e ^{-\rho t} \big( \pi (X_t^{ x}) (\bar y - y) - \lambda (Y^{\bar y,\bar u}_t \log Y^{\bar y,\bar u}_t - Y^{y, \bar u}_t \log Y^{ y, \bar u}_t) \big) \dd t \bigg] \\
 & = C (1 + |x|^p) |\bar y - y| + \mathbb E  \bigg[\int_0^\infty e ^{-\rho t} \big( - \lambda (Y^{\bar y,\bar u}_t \log Y^{\bar y,\bar u}_t - Y^{y,\bar u}_t \log Y^{ y,\bar u}_t) \big) \dd t \bigg],
\end{aligned}
\end{equation}
where we have used Condition \ref{condition rho} in Assumption \ref{assumption general} together with \eqref{eq estimate SDE growth}.
In order to continue the latter estimate, let $\hat z$ be the maximum point of the function $z \mapsto - \lambda z \log z$ (i.e., $\hat z = e^{-1}$), and notice that $z \mapsto - \lambda z \log z$ is increasing in $[0,\hat z]$ and Lipschitz in $[\frac{\hat z}2, 1]$, with Lipschitz constant smaller than  $C_\lambda := 1 + |\log (\hat z /2)|$. 
Suppose now that $y - \bar y \leq \frac{\hat z}2$. 
If $Y^{\bar y,\bar u}_t \leq \frac{\hat z}2 $, then $ Y^{y,\bar u}_t \leq \hat z$, and since $Y^{\bar y,\bar u}_t \leq Y^{y,\bar u}_t$, we obtain $ - \lambda (Y^{\bar y,\bar u}_t \log Y^{\bar y,\bar u}_t - Y^{y,\bar u}_t \log Y^{ y,\bar u}_t) \leq 0$ by monotonicity.
On the other hand, if  $Y^{\bar y,\bar u}_t \geq \frac{\hat z}2 $, then $ Y^{y,\bar u}_t \geq \frac{\hat z}2$, and by Lipschitzianity we find
$ - \lambda (Y^{\bar y,\bar u}_t \log Y^{\bar y,\bar u}_t - Y^{y,\bar u}_t \log Y^{ y,\bar u}_t) \leq  C_\lambda (Y^{ y,\bar u}_t - Y^{\bar y,\bar u}_t) = C_\lambda (y-\bar y)$. 
Going back to \eqref{eq Lipschitz V y first}, this argument implies that
$$
\begin{aligned}
 0 \leq y - \bar y \leq \frac{\hat z}2 \implies V^{\lambda, \varepsilon}( x, \bar y) &- V^{\lambda, \varepsilon}(x,y) - \zeta 
\leq C (1 + |x|^p + |\log(\hat z /2)|) |\bar y - y|, 
\end{aligned}
$$
which give the local Lipschitz property of $V^\varepsilon$ in $y$ in the case $\bar y \leq y$.

On the other hand, if $\bar y \geq y$, then the control $( \bar u, \bar \eta)$ is not necessarily admissible for $y$, as $Y_t^{y,\bar u}$ could become smaller than 0. 
Define then $u_t: = \bar u_t \frac{y}{\bar y}$ and $\eta_t: = \bar \eta_t \frac{y}{\bar y}$. 
Notice that, if $s\bar u_t - \bar \eta _t \leq 0$, then $s u_t - \eta_t = \frac{y}{\bar y} ( s\bar u_t - \bar \eta_t) \leq 0 \leq \frac 1 \varepsilon \beta(s)$, while  $s u_t - \eta_t = \frac{y}{\bar y} ( s\bar u_t - \bar \eta_t) \leq s\bar u_t - \bar \eta_t \leq \frac 1 \varepsilon \beta(s)$ if $s\bar u_t - \bar \eta_t > 0$.
Thus, the process $(u,\eta) $ takes values in the set $ U _\varepsilon$. 
Moreover, since 
\begin{equation}\label{eq y bar y}
    Y^{y,u}_t = y- \int_0^t u_s \dd s = \frac{y}{\bar y} \Big( \bar y- \int_0^t \bar u_s \dd s \Big) = \frac{y}{\bar y}Y^{\bar y, \bar u} \geq 0,
\end{equation}
it follows that $(u,\eta) \in \mathcal U _\varepsilon (y)$.
Hence, by the suboptimality of $(u,\eta)$ for the initial condition $(x,y)$, using \eqref{eq y bar y}, we find
\begin{equation}\label{eq proof Lip y GIorgio}
\begin{aligned}
V^{\lambda, \varepsilon}&( x, \bar y) - V^{\lambda, \varepsilon}(x,y) - \zeta \\
& \leq J^{\lambda, \varepsilon}( x, \bar y;\bar u, \bar \eta ) - J^{\lambda, \varepsilon}(x,y;  u , \eta) \\
&  = \mathbb E  \bigg[\int_0^\infty e ^{-\rho t} \Big( - \lambda \int_0^1 \Big(1+ \log \big(  r  Y^{\bar y,\bar u}_t   + (1-r)Y^{y,  u}_t \big) \Big)  ( Y^{\bar y,\bar u}_t  - Y^{y, \bar u}_t ) \dd r \\ 
& \quad \quad \quad \quad \quad \quad \quad \quad  +\pi (X_t^{ x}) (\bar y - y) \frac{Y_t^{\bar y, \bar u}}{\bar y} + G (X_t^x) \frac{\bar y - y}{\bar y} \bar u _t - \frac{\bar y - y}{\bar y} \bar \eta_t \Big) \dd t \bigg]\\
 & \leq \mathbb E  \bigg[\int_0^\infty e ^{-\rho t} \Big(
 - \lambda \int_0^1 \Big(1+ \log ( r \bar y + (1-r) y ) + \log \big( \frac{Y^{y, \bar u}_t}{\bar y} \big) \Big) \frac{Y^{y, \bar u}_t}{\bar y} {(\bar y -y) }  \Big)  \dd r \\ 
& \quad \quad \quad \quad \quad \quad \quad \quad  + |\pi (X_t^{ x})| (\bar y - y)  +  G (X_t^x) \frac{\bar y - y}{\bar y} \bar u _t  \Big) \dd t \bigg]\\ 
& \leq (\bar y -y) \mathbb E  \bigg[\int_0^\infty e ^{-\rho t} \Big(
 \lambda  \Big( 1 + |\log y|  - \log \big( \frac{Y^{y, \bar u}_t}{\bar y} \big)  \frac{Y^{y, \bar u}_t}{\bar y} \Big) + |\pi (X_t^{ x})| + \frac{G (X_t^x)}{\bar y} \bar u _t  \Big) \dd t \bigg] \\
 & \leq C (\bar y -y) \mathbb E  \bigg[\int_0^\infty e ^{-\rho t} \Big(
 \lambda  (  1 + |\log y|  ) + |\pi (X_t^{ x})| + \frac{G (X_t^x)}{\bar y} \bar u _t  \Big) \dd t \bigg]. 
\end{aligned}
\end{equation}
Similarly to the arguments that lead to \eqref{eq estimate u G lip in x}, we have
$$
\begin{aligned}
 \mathbb E  \bigg[ \int_0^\infty e ^{-\rho t}  \bar u_t  G(X_t^x) \dd t \bigg] \leq& \lim_{T \to \infty}
 \mathbb E  \bigg[ \int_0^T e ^{-\rho t}  \bar u _t  | G(X_t^x) | \dd t \bigg]\\ 
 \leq &\lim_{T \to \infty}   \mathbb E  \bigg[ \sup_{t \leq T} \big(e^{-\rho t}  |G(X_t^x)|\big) \int_0^T \bar u _t \dd t  \bigg]\\
 \leq &\lim_{T \to \infty}   C \mathbb E  \bigg[\sup_{t \leq T} \big(e^{-\rho t} ( 1 +  |X_t^{ x}|^{p} )\big)   \bigg]  \\
 \leq & C  (1 + |x|^{p}), 
\end{aligned}
$$
where we have used the estimate in \eqref{eq estimate SDE growth SUP}. 
Plugging the last inequality into \eqref{eq proof Lip y GIorgio}, we obtain
$$
\begin{aligned}
V^{\lambda, \varepsilon}( x, \bar y) - V^{\lambda, \varepsilon}(x,y) - \zeta 
  &\leq C |\bar y -y| \bigg( \frac{1+|x|^p}{\bar y}  + |\log y| + \mathbb E  \bigg[\int_0^\infty e ^{-\rho t} \big( 1   +  |X_t^{ x}|^{p} \big) \dd t   \bigg] \bigg) \\
   &\leq C |\bar y -y| \Big(  \frac{1+|x|^p}{\bar y}  + |\log y| \Big) \int_0^\infty e ^{ \big(-\rho +\hat c _0 (p) \big) t }   \dd t       \\
 & \leq C |\bar y -y| \Big( \frac{1+|x|^p}{\bar y}  + |\log y| \Big),
\end{aligned}
$$
where we have used the condition $\rho > \hat c _0(p)$ given in Assumption \ref{assumption general}.
This shows the local Lipschitz property of $V^\varepsilon$ in $y$ for $\bar y \geq y$. 

\smallbreak\noindent
\emph{Step 4 (Uniform  estimates for $ D^2_x V^{\lambda, \varepsilon}$).}
We first show the following semiconvexity estimate: there exists a constant $C$ such that
\begin{equation}
\label{eq semiconvexity}
V^{\lambda, \varepsilon}( x^\delta, y) - \delta V^{\lambda, \varepsilon}(\bar x, y) - (1-\delta)V^{\lambda, \varepsilon}( x,y) 
\leq  C \delta (1-\delta) ( 1 +|{x}|^{p-1}+ |\bar{x}|^{p-1} )  |\bar{x} - x|^2  , 
\end{equation} 
for each $\delta \in [0,1]$,  $\bar{x}, x \in \mathbb R^n$, $ y \in [0,1]$ and $\varepsilon >0$, and where
$x^\delta:=\delta \bar{x} + (1-\delta)x$.

Take an arbitrary $\zeta>0$ and $(u, \eta) \in \mathcal{U}_\varepsilon(y)$  such that $V^{\lambda, \varepsilon} ( x^\delta, y) -\zeta
 \leq J^{\lambda, \varepsilon}( x^\delta, y; u, \eta)$. 
Notice that such a control is admissible also for the initial conditions $(\bar x, y)$ and $(x,y)$.
Since $(u, \eta)$ is not necessarily optimal for $(x,y)$ or $(\bar{x}, \bar y)$, we have
\begin{equation}\label{eq proof GIorgio OX}
\begin{aligned}
V^{\lambda, \varepsilon}& ( x^\delta, y) - \delta V^{\lambda, \varepsilon}(\bar x,  y) - (1-\delta)V^{\lambda, \varepsilon}( x,y) 
-\zeta \\
& \leq J^{\lambda, \varepsilon}( x^\delta, y; u, \eta) - \delta J^{\lambda, \varepsilon}(\bar x, y;u, \eta) - (1-\delta)J^{\lambda, \varepsilon}( x,y;u, \eta) \\
& \leq \mathbb{E} \bigg[ \int_0^\infty e^{-\rho t} \Big( Y_t^{y,u} \big( \pi (X_t^\delta)  - \delta \pi (X_t^{\bar x}) - (1-\delta) \pi (X_t^{x})  \big) \\
& \quad \quad \quad \quad \quad \quad \quad  \quad \quad 
+ u_t \big( G (X_t^\delta) - \delta G (X_t^{\bar x}) - (1-\delta) G (X_t^{x})  \Big)  \dd t \bigg].
\end{aligned}
\end{equation}
Next, set $\hat X_t := \delta  X^{\bar x}_t  + (1-\delta) X^x_t$, $X^\delta_t:= X^{x^\delta}_t$ and define, for a generic function $\varphi: \mathbb R ^n \to \mathbb R$, the transformations 
$$
\begin{aligned}
    \Delta [\varphi] _t & := |\varphi (X_t^\delta)  - \varphi(\hat X_t)|, \\
    \Gamma [\varphi] _t & :=  |\varphi(\hat X_t) - \delta \varphi (X_t^{\bar x}) - (1-\delta) \varphi (X_t^{x})|.
\end{aligned}
$$
Then, applying the monotone convergence theorem, we rewrite \eqref{eq proof GIorgio OX} as 
\begin{equation}\label{estimate A + B}
\begin{aligned} 
V^{\lambda, \varepsilon}& ( x^\delta, y) - \delta V^{\lambda, \varepsilon}(\bar x,  y) - (1-\delta)V^{\lambda, \varepsilon}( x,y) 
-\zeta \\
& \leq  \lim_{T \to \infty} \mathbb{E} \bigg[ \int_0^T e^{-\rho t} \Big( Y_t^{y,u} \Delta [\pi] _t + u_t \Delta [G] _t   + Y_t^{y,u} \Gamma [\pi] _t + u_t \Gamma [G]_t) \Big) \dd t\bigg].
\end{aligned}
\end{equation}
We next estimate each term on the right-hand side of \eqref{estimate A + B} separately.

First, using Condition \ref{condition costs} in Assumption \ref{assumption general}, Hölder inequality, and the SDE estimates \eqref{eq estimate SDE growth} and \eqref{eq estimate SDE semiconv}, we find
\begin{equation}\label{eq delta pi}
    \begin{aligned}
\mathbb{E} \bigg[&\int_0^T e^{-\rho t}   Y_t^{y,u} \Delta [\pi] _t \dd t \bigg] \\
& \leq C \mathbb{E} \bigg[ \int_0^T e^{-\rho t} 
\big(1+ | \hat X_t |^{p-1} + | X_t^\delta  |^{p-1} \big) \big| \hat X_t- X^\delta_t \big| \dd t \bigg] \\
& \leq  C\bigg(     \int_0^T e ^{-\rho t} \Big( 1 + \big( \mathbb E \big[ |\hat X_t|^{2(p-1)} \big] \big)^{\frac12} + \big( \mathbb E \big[ | X ^\delta_t|^{2(p-1)} \big] \big)^{\frac12}  \Big) \Big( \mathbb E  \big[ | \hat X_t -  X ^\delta_t |^2 \big]  \Big)^{\frac12}  \dd t \\
& \leq C \delta (1-\delta) ( 1 +|{x}|^{p-1}+ |\bar{x}|^{p-1} )  |\bar{x} - x|^2 \int_0^T e^{\big( - \rho + \frac{ \hat c _0  (2(p-1))}{2} + \frac{\hat c _1 (2)}{2} \big) t} \dd t \\
& \leq C \delta (1-\delta) ( 1 +|{x}|^{p-1}+ |\bar{x}|^{p-1} )  |\bar{x} - x|^2, 
    \end{aligned}
\end{equation}
where the last inequality follows by  Condition \ref{condition rho} in Assumption \ref{assumption general}.

Secondly, using Condition \ref{condition costs} in Assumption \ref{assumption general}, Hölder inequality, and the SDE estimates \eqref{eq estimate SDE growth} and \eqref{eq estimate SDE semiconv}, we find
\begin{equation}\label{eq delta G}
    \begin{aligned}
        \mathbb{E} \bigg[&
        \int_0^T e^{-\rho t} u_t \Delta [G] _t \dd t \bigg] \\
    & \leq  \mathbb{E} \bigg[
    \sup_{t \leq T} \big( e^{-\rho t}  \Delta [G] _t \big) \int_0^T u_t \dd t \bigg] \\
&\leq   C \mathbb{E} \bigg[ \sup_{t \leq T} \Big( e^{-\rho t} 
\big(1+ |  \hat X_t |^{p-1} + | X_t^\delta  |^{p-1} \big) \big|  \hat X_t- X_t^\delta \big| \Big) \bigg] \\
 &\leq   C   \Big( \mathbb E \Big[ \sup_{t \leq T} \Big( e ^{-\rho t} \big( 1 +   | \hat X_t|^{2(p-1)}  +  | X_t^\delta |^{2(p-1)} \big) \Big) \Big] \Big)^{\frac12} \Big( \mathbb E \Big[ \sup_{t \leq T} \big( e ^{-\rho t} |  \hat X_t - X_t^\delta |^2 \big) \Big]  \Big)^{\frac12}\\
 & \leq C \delta (1-\delta) ( 1 +|{x}|^{p-1}+ |\bar{x}|^{p-1} )  |\bar{x} - x|.
\end{aligned}
\end{equation} 
where the last inequality follows from the estimates \eqref{eq estimate SDE growth SUP} and \eqref{eq estimate SDE semiconv SUP}.

Thirdly, using Condition \ref{condition costs} in Assumption \ref{assumption general}, Hölder inequality, and the SDE estimates \eqref{eq estimate SDE growth} and \eqref{eq estimate SDE Lip}, we find
\begin{equation}
    \label{eq D pi}
    \begin{aligned}
        \mathbb{E} \bigg[ & \int_0^T e^{-\rho t} Y_t^{y,u} \Gamma [\pi] _t \dd t \bigg] \\
        & \leq C \delta (1-\delta) \mathbb E \bigg[ \int_0^T e^{-\rho t}  \Big( 
 1+     | X^{\bar x} _t |^{p-2}   +  | X^x _t |^{p-2} \Big) |  X^{\bar x} _t - X ^x_t \big|^2 \dd t \bigg] \\
        & \leq C \delta (1-\delta) \int_0^T e^{-\rho t}  \Big( 
 1+ \big( \mathbb{E}[ |X^{\bar x} _t |^{2(p-2)}] \big)^{\frac{1}{2}} + \big( \mathbb{E}[ | X^x _t |^{2(p-2)}] \big)^{\frac{1}{2}} \Big) 
\\
& \qquad \qquad \qquad \qquad \qquad \times \Big( \mathbb{E}[ | X^{\bar x} _t - X^x_t \big|^4 ] \Big)^{\frac12}  dt   \\
  &  \leq C \delta (1-\delta) ( 1 + |{x}|^{p-2} + |\bar{x}|^{p-2}) |\bar{x} - x|^2 \int_0^T e^{\big(-\rho  + \frac{ \hat c _0(2(p-2))}{2} + \frac{\hat c _1 (4)}{2} \big) t} \dd t \\
& \leq C \delta (1-\delta) ( 1 + |{x}|^{p-2} + |\bar{x}|^{p-2})  |\bar{x} - x|^2, 
    \end{aligned}
\end{equation}
where the finiteness of the integrals follows again from Condition \ref{condition rho} in Assumption \ref{assumption general}.

Fourthly, using Condition \ref{condition costs} in Assumption \ref{assumption general}, Hölder inequality, and the SDE estimates \eqref{eq estimate SDE growth} and \eqref{eq estimate SDE Lip}, we find
\begin{equation}
    \label{eq D G}
    \begin{aligned}
        \mathbb{E} \bigg[ & \int_0^T e^{-\rho t} u_t \Gamma [ G] _t   \dd t \bigg] \\
        & \leq  \mathbb{E} \bigg[ \sup_{t \leq T} \big( e^{-\rho t}   \Gamma [ G] _t \big)   \int_0^T  u_t \dd t \bigg] \\
        & \leq C \delta (1-\delta) \mathbb E \bigg[ \sup_{t \leq T} \Big( e^{-\rho t}  \big( 
         1+  | X^{\bar x} _t |^{p-2}   +  | X^x _t |^{p-2} \big) |  X^{\bar x} _t - X ^x_t \big|^2 \Big)  \bigg] \\
        & \leq C \delta (1-\delta) \Big( \mathbb E \Big[ \sup_{t \leq T} \Big( e^{-\rho t}  \big( 
        1+ |X^{\bar x} _t |^{2(p-2)}  + | X^x _t |^{2(p-2)} \big) \Big) \Big] \Big)^{\frac 12} \\
        & \qquad  \qquad  \qquad   \times         \Big( \mathbb E \Big[ \sup_{t \leq T} \Big( e^{-\rho t} | X^{\bar x} _t - X^x_t \big|^4 ] \Big)  \Big] \Big)^{\frac12}\\
        &  \leq C \delta (1-\delta) ( 1 + |{x}|^{p-2} + |\bar{x}|^{p-2}) |\bar{x} - x|^2 \int_0^T e^{\big(-\rho + \frac{\hat c _0 (2(p-2)}{2} + \frac{\hat c _1 (4)}2 \big) t} \dd t \\
& \leq C \delta (1-\delta) ( 1 + |{x}|^{p-2} + |\bar{x}|^{p-2})  |\bar{x} - x|^2, 
    \end{aligned}
\end{equation}
where we have used Condition \ref{condition rho} in Assumption \ref{assumption general}.

Plugging \eqref{eq delta pi}, \eqref{eq delta G}, \eqref{eq D pi} and \eqref{eq D G} back into \eqref{estimate A + B}, we obtain \eqref{eq semiconvexity}.

We will now use \eqref{eq semiconvexity} in the PDE \eqref{eq HJB penalized} in order to obtain an estimate for the derivative $D_x^2 V^{\lambda, \varepsilon}$.
Notice that \eqref{eq semiconvexity} implies that, for any $r>0$ and $B_r(0) := \{ x\in \mathbb R ^n \, |\, |x| < r \}$, one can find a constant $C_r >0 $ such that
\begin{equation}\label{eq semiconvexity derivatives}
\partial^2_{zz} V^{\lambda, \varepsilon}(x,y) \geq - C_r, \quad \text{for any $x \in B_r (0)$, $z \in \R^n$ with $|z|=1$, and $y \in [0,1]$.}
\end{equation}
Choose an orthonormal basis $z^1,...,z^n$ of $\R^n$ such that
$$
\tr (\sigma \sigma^* (x) D_x^2 V^{\lambda, \varepsilon}(x,y) ) = \sum_{i=1}^n c_{\sigma}^i (x) \partial^2_{z^i z^i} V^{\lambda, \varepsilon}(x,y), 
$$
where $c_{\sigma}^1,...,c_{\sigma}^n$ are the eigenvalues of the symmetric, nonnegative definite
matrix $\sigma \sigma^* (x)$.
Since $V^\varepsilon$ solves the PDE in \eqref{eq HJB penalized}, using the latter identity together with \eqref{eq semiconvexity derivatives} and the estimates in the previous steps, unless to take a larger constant $C_r$ (independently from $\varepsilon$) we obtain
\begin{equation*}
\begin{aligned}
    - C_r \leq &  \sum_{i=1}^n c_{\sigma}^i (x) \partial^2_{z^i z^i} V^{\lambda, \varepsilon}(x, y) \\
    =& \rho V^{\lambda, \varepsilon}(x, y)-b(x) D_{x} V^{\lambda, \varepsilon}(x, y)-\Pi(x, y)- \frac 1 {\varepsilon} \beta\left(G(x)-V_{y}^{\lambda, \varepsilon}(x, y)\right) \\
    \leq & \rho V^{\lambda, \varepsilon}(x, y)-b(x) D_{x} V^{\lambda, \varepsilon}(x, y)-\Pi(x, y)  \leq C_r,  \quad \text{for any $(x,y) \in B_r (0) \times (0,1)$}.
\end{aligned}
\end{equation*} 
This in turn implies that
\begin{equation*}
\begin{aligned}
    0\leq \frac 1 {\varepsilon} \beta\left(G(x)-V_{y}^{\lambda, \varepsilon}(x, y)\right)   \leq C_r ,  \quad \text{for any $(x,y) \in B_r (0) \times (0,1)$},
\end{aligned}
\end{equation*} 
so that, by using the uniform ellipticity of $\sigma$ ({\it cf.}\ Condition \ref{codition ellipticity} in Assumption \ref{assumption general}), by Theorem 9.11 p.\ 235 in \cite{gilbarg2001} we conclude that, for any $q>0$,  unless to enlarge again the constant $C_r$ (independently from $\varepsilon$), for any $y \in (0,1)$ we have
\begin{equation}
    \label{eq secon order derivative uniform estimate}
    \int_{B_{r/2} (0)} |D_x^2 V^{\lambda, \varepsilon}(x,y) |^q \dd x \leq C_r \bigg(1 + \int_{B_{r/2} (0)} \Big| \frac 1 {\varepsilon} \beta\left(G(x)-V_{y}^{\lambda, \varepsilon}(x, y)\right) \Big|^q \dd x \bigg) \leq C_r,
\end{equation}
thus giving the desired estimate.

\smallbreak\noindent
\emph{Step 5 (Lipschitzianity of $V^\lambda_y$ via optimal stopping).}
Following the arguments in \cite{Karatzas&Shreve84} (see in particular Theorem 3.4) we can show that, for any $(x, y) \in \mathbb{R}^{n} \times(0,1]$, one has the representation
\begin{equation*}
V^\lambda_y (x, y) =\sup _{\tau \in \mathcal T} \mathbb{E}\left[\int_{0}^{\tau} e ^{-\rho t} \Pi_{y}\left(X^x_{t}, y\right) \dd t+e^{-\rho {\tau}} G\left(X^x_{\tau}\right)\right],  
\end{equation*}
that we will now employ to prove the local Lipschitzianity of $V^\lambda_y$.

We first show the Lipschitzianity in $y$. 
Take $x \in \mathbb{R}^{n}$, $\bar{y}, y \in (0,1]$ with $\bar y \leq y$. 
By the monotonicity of $\Pi_y$ in $y$ we have $0 \leq  V^\lambda_y(x, \bar{y})-V^\lambda_y(x, y) $. 
Moreover, 
for $\zeta>0$ and  $\tau \in \mathcal T$ such that $ V^\lambda_y (x, \bar{y})  - \zeta 
  \leq \mathbb{E}\left[\int_{0}^{\tau} e ^{-\rho t} \Pi_{y}(X_{t}^x, \bar{y}) \dd t + e^{-\rho \tau} G(X^x_\tau ) \right]$, we find
$$
\begin{aligned}
 V^\lambda_y (x, \bar{y})-V^\lambda_y(x, y) - \zeta 
 & \leq \mathbb{E}\left[\int_{0}^{\tau} e ^{-\rho t}\left(\Pi_{y}\left(X_{t}^x, \bar{y}\right)-\Pi_{y}\left(X_{t}^x, y\right)\right) \dd t\right] \\
& = \lambda \mathbb{E}\left[\int_{0}^{\tau} e ^{-\rho t}(\log y - \log \bar{y}) \dd t\right] \\
& \leqslant \lambda \mathbb{E}\left[\int_{0}^{\tau} e ^{-\rho t} \dd t\right] \frac{1}{\bar{y}}(y - \bar y) 
\leqslant \lambda C  \frac{1}{\bar{y}} (y - \bar y) .
\end{aligned}
$$
From the arbitrariness of $\zeta$, we conclude that
$0 \leqslant V^\lambda_y(x, \bar{y})-V^\lambda_y(x, y) \leqslant C \frac{1}{\bar{y}} (y - \bar y)$, showing the Lipschitzianity in $y$. 

We next show the Lipschitzianity in $x$.
For $\bar{x}, x \in \mathbb{R}^{n}$ and $y \in (0,1]$, take $\zeta >0$ and $\tau \in \mathcal T$ such that $V^\lambda_y (\bar x, y)  - \zeta 
  \leq \mathbb{E}\left[\int_{0}^{\tau} e ^{-\rho t} \Pi_{y}(X_{t}^{\bar x}, {y}) \dd t + e^{-\rho \tau} G(X^{\bar x}_\tau ) \right]$. 
We observe that
\begin{equation}\label{eq Vy first}
\begin{aligned}
 V^\lambda_y (\bar x, y) & - V^\lambda_y (x, y)-\zeta \\
 & \leq \mathbb{E}\bigg[\int_{0}^{\tau} e^{-\rho {t}} \left(\Pi_{y}(X_{t}^{\bar{x}}, y )-\Pi_{y} (X_{t}^{x}, y )\right) \dd t + e^{-\rho \tau}\left( G(X_\tau^{\bar{x}})-G (X_\tau^{x})\right) \bigg] \\
 & = \mathbb{E}\bigg[\int_{0}^{\tau} e^{-\rho {t}} \left( \pi(X_{t}^{\bar{x}}  )- \pi (X_{t}^{x}  )\right) \dd t + e^{-\rho \tau}\left( G(X_\tau^{\bar{x}})-G (X_\tau^{x})\right) \bigg].
\end{aligned}
\end{equation}
Thus, by repeating the arguments that lead to \eqref{eq estimate u G lip in x} and \eqref{eq estimate pi Lip in x}, we obtain
$$
 V^\lambda_y (\bar x, y) - V^\lambda_y (x, y)-\zeta \leq C  (1 + |x|^{p-1} + |\bar x|^{p-1})  |\bar x - x|.  
$$
Hence, the local Lipschitzianity in $x$ follows by the arbitrariness of $\zeta$ and exchanging the role of $x$ and $\bar x$.

\smallbreak\noindent
\emph{Step 6 (Regularity of $V^\lambda$ and variational inequality).} 
We begin by noticing that the continuity of $V^\lambda$ in $y=0$ can be shown as in \eqref{eq continuity of V lambda epsilon in 0}.

By the estimates in Steps 2 and 3, for any $r, \delta > 0$ and any ball $B_r (0) := \{ x \in \mathbb R ^n | \,\, |x|<r \}$, we can find a subsequence $(\varepsilon_n)_n$ with $\varepsilon_n \to 0$ as $n \to \infty$ such that
\begin{equation}
   \label{eq limits Vx Vy} 
   \begin{aligned}
   & V^{\lambda,\varepsilon_n} \to V^\lambda  \text{ as $n \to \infty$, uniformly in $B_r(0) \times (\delta, 1) $,} \\
   & (D_x V^{\lambda,\varepsilon_n},V^{\lambda,\varepsilon_n}_y) \to (D_x V^\lambda,V^\lambda_y)  \text{ as $n \to \infty$, weakly in $\mathbb L ^2 (B_r(0) \times (\delta, 1) )$.}
   \end{aligned}
\end{equation}
Moreover, thanks to the estimate in Step 4, for any fixed $y \in (0,1]$ we can select a further subsequence $(\varepsilon^y_n)_n$ such that
\begin{equation} \label{eq limits DV Delta V}
\begin{aligned}
    & D_x V^{\lambda,\varepsilon^y_n} (\cdot,y) \to D_x V^\lambda(\cdot,y) \ \text{ as $n \to \infty$, uniformly in $B_r(0)$,} \\
   & D_x^2 V^{\lambda,\varepsilon^y_n}(\cdot,y) \to D^2_xV^\lambda(\cdot,y) \ \text{ as $n \to \infty$, weakly in $\mathbb L ^2 (B_r(0))$.}
\end{aligned}    
\end{equation}
Therefore, taking limits in \eqref{eq HJB penalized} %and using the continuity of $V^\lambda_y$ from Step 4, 
we obtain that
\begin{equation*}
\text{for any $y \in (0,1)$} \quad (\rho -\mathcal{L}_{x}) {V^\lambda} (\cdot,y)-\Pi(\cdot,y) \geq0 \quad \text{and} \quad V^\lambda_y(\cdot,y) -G \geqslant 0, \quad \text{$\dd x$-a.e.}
\end{equation*}
which yields that
\begin{equation}\label{eq HJB inequality}
    \max \big\{ \left(\mathcal{L}_{x} -\rho \right) V^\lambda(x, y) + \pi (x) y - \lambda y \log y, - V^\lambda_{y}(x, y) + G(x) \big\}\leq 0. 
\end{equation}
To complete the proof, if $(\bar x, \bar y)$ is such that $V^\lambda_y(\bar x,\bar y) -G (\bar x) > 0$, by continuity of $V^\lambda_y$ ({\it cf.}\ Step 5) we can find $r>0$ such that $V^\lambda_y ( x, \bar y) -G (x) > 0$ for any $x \in B_r(0) $. 
Thus, for a.a.\ $x \in B_r(0)$ there exists $\bar n_x$ such that ${V}^{\lambda,\varepsilon^{\bar y}_n}_y ( x, \bar y) -G (x) > 0$ for any $n \geq \bar n _x$, which gives $(\rho -\mathcal{L}_{x}) {V}^{\lambda,\varepsilon^{\bar y}_n} (x,\bar y)-\Pi(x,\bar y) = 0$ for any $n \geq \bar n _x$ so that
$$
\lim_n (\rho -\mathcal{L}_{x}) {V}^{\lambda,\varepsilon^{\bar y}_n} (\cdot,\bar y)-\Pi(\cdot,\bar y) = 0, \quad \text{a.e.\ in $x \in B_r(0)$.}
$$
Since $(\rho -\mathcal{L}_{x}) {V^\lambda} (\cdot,\bar y)-\Pi(\cdot,\bar y)  $ is the weak limit in $\mathbb L ^2 (B_r(\bar x))$ of $\big( (\rho -\mathcal{L}_{x}) {V}^{\lambda,\varepsilon^{\bar y}_n} (\cdot,\bar y)-\Pi(\cdot,\bar y) \big)_n$, the latter limits in turn implies that $(\rho -\mathcal{L}_{x}) {V^\lambda} (\cdot,\bar y)-\Pi(\cdot,\bar y) =0 $ a.e.\ in $ B_r(\bar x)$, thus showing that \eqref{eq HJB inequality} actually holds with equality and completing the proof.

\subsection{Proof of Theorem \ref{theorem value function}: uniqueness for the HJB}\label{subsection uniqueness HJB}
Uniqueness of the solution follows from a verification argument, that we prove in two steps.
\smallbreak\noindent
\emph{Step 1.}
Let $\tilde V$ be a solution of the HJB equation as in the statement of the theorem.
Fix $(x,y) \in \mathbb R ^n \times [0,1]$ and consider a generic $\xi \in \mathcal A (y)$.
For $R>0$, set $\tau_R:=\inf\{t \ge 0 \,| \,  |X^x_t| \ge R\}$. 
Thanks to the regularity of $\tilde V$, we can employ (a generalized) It\^o's formula to the process $\big( e^{-\rho t} \tilde V (X^x_t, Y^{y,\xi}_t) \big)_t$ (see \cite[Chapter 8, Section VIII.4, Theorem 4.1]{fleming2006controlled} and \cite[Theorem 4.2]{angelis2019solvable}),  up to the
stopping time $\tau_R \wedge T$, for some $T > 0$, to obtain
\begin{equation}\label{eq Ito verification}
\begin{aligned}
  \mathbb{E}\Big[e^{-\rho \tau_R \wedge T } \tilde V(X_{\tau_R\wedge T},Y^{y,\xi}_{\tau_R\wedge T})\Big] 
  =& \tilde V (x,y) 
     +\mathbb{E}\bigg[\int_0^{\tau_R\wedge T} e^{-\rho t} (\mathcal{L}_x - \rho) \tilde V (X^x_t,Y^{y,\xi}_{t-})    \dd t \bigg] \\ 
    & + \mathbb{E}\bigg[\int_0^{\tau_R\wedge T} e^{-\rho t} \, \tilde V _y (X^x_t,Y^{y,\xi}_{t-}) \dd Y_t^{y,\xi} \bigg]\\
    &+\mathbb{E}\bigg[\sum_{0\leq t\leq \tau_R\wedge T} \Big( \Delta \tilde V  (X^x_t,Y^{y,\xi}_{t}) - \tilde V _y (X^x_t,Y^{y,\xi}_{t-})\Delta Y^{y,\xi}_t \Big)\bigg], 
\end{aligned}
\end{equation}
with the notations $\Delta \tilde V(X_t,Y^{y,\xi}_{t}) =\tilde V(X_t,Y^{y,\xi}_{t})-\tilde V(X_t,Y^{y,\xi}_{t-})$ and $\Delta Y^{y,\xi}_t =Y^{y,\xi}_t - Y^{y,\xi}_{t-} $.
The concavity of $\tilde V$ in $y$ now implies that
$$
\sum_{0\leq t\leq \tau_R\wedge T} \Big( \Delta \tilde V  (X^x_t,Y^{y,\xi}_{t}) - \tilde V _y (X^x_t,Y^{y,\xi}_{t-})\Delta Y^{y,\xi}_t \Big) \leq 0.
$$
Moreover, since $\tilde V$ solves the HJB equation, we have $\mathbb P \otimes \dd t$-a.e.
$$
(\mathcal{L}_x - \rho) \tilde V (X^x_t,Y^{y,\xi}_{t-}) \leq - \Pi(X^x_t,Y^{y,\xi}_{t-})
\quad \text{and}\quad
- \tilde V_y(X^x_t, Y^{y,\xi}_{t-}) \leq - G(X^x_t).
$$
Plugging the latter two inequalities into \eqref{eq Ito verification} and then rearranging the terms,  we obtain
$$
\begin{aligned}
 \mathbb{E}\bigg[\int_0^{\tau_R\wedge T} e^{-\rho t} \Pi(X^x_t,Y^{y,\xi}_{t-})    \dd t  
     &+ \int_0^{\tau_R\wedge T} e^{-\rho t} G (X^x_t) \dd \xi_t \bigg]\\
     & \leq 
     \tilde V (x,y) -
      \mathbb{E}\Big[e^{-\rho \tau_R \wedge T } \tilde V(X_{\tau_R\wedge T},Y^{y,\xi}_{\tau_R\wedge T})\Big].
\end{aligned}
$$
Finally, thanks to the growth conditions of $\pi$ and $G$, to the growth of $\tilde V$, to the estimates \eqref{eq estimate SDE growth} and \eqref{eq estimate SDE growth SUP},
we can use the dominated convergence theorem to take limits, first as $T \to \infty$ and then as $R\to \infty$, in order to obtain 
$$
J^\lambda (x,y;\xi) \leq \tilde V(x,y).
$$
By arbitrariness of $\xi$, we deduce that $V^\lambda(x,y) \leq \tilde V(x,y)$.

\smallbreak\noindent
\emph{Step 2.}
We adapt the arguments in \cite{deangelis.ferrari.federico.2017optimal} to construct a control $\xi^\lambda$ which satisfies 
$$
J^\lambda (x,y;\xi^\lambda) = \tilde V(x,y).
$$
This identity, together with the previous step, implies that $V^\lambda = \tilde V$ and that $\xi^\lambda$ is optimal.

Define the function $\tilde g_\lambda : \R^n \to [0,1]$ by
$$
   \tilde g_\lambda (x) := \sup \big\{ y \in[0,1]  \, | \, - \tilde V _y (x,y) +  G(x) < 0 \big\},
$$
where we set $\tilde g_\lambda (x):=1$ if $\{ y \in[0,1] \, | \, - \tilde V _y (x,y) +  G(x) < 0 \} = \emptyset$.
The function $ \tilde g_\lambda$ is well defined by concavity of $\tilde V$.
Noticing that $\{ (x,y) \in \R^n \times [0,1] \, | \, y< \tilde g_\lambda(x) \} = \{ (x,y) \, | \, - \tilde V _y (x,y) +  G(x) < 0 \}$, by continuity of $\tilde V _y$ we deduce that the set $\{ (x,y) \in \R^n \times [0,1] \, | \, y<\tilde g_\lambda(x) \}$ is open, so that $\tilde g_\lambda$ is lower semi-continuous.

Next, for fixed $(x,y) \in \R ^n \times [0,1]$, define the process $\xi^\lambda$ by setting
$$
\xi^\lambda_t := \sup_{s\leq t } \big( y- \tilde g_\lambda(X^x_s) \big)^+, \quad t\geq 0. 
$$
Such a process is clearly monotone nondecreasing, so that it admits left limits.
To show its right-continuity, use the upper semi-continuity of $x \mapsto (y-\tilde g_\lambda(x))^+$ to obtain
$$
\begin{aligned}
    \lim_{s \downarrow t} \xi^\lambda_s 
    & = \xi^\lambda_t \lor \lim_{s \downarrow t} \sup_{t \leq r \leq s} \big( y- \tilde g_\lambda(X^x_r) \big)^+ \\
    & = \xi^\lambda_t \lor \limsup_{s \downarrow t} \big( y- \tilde g_\lambda(X^x_s) \big)^+ \leq \xi^\lambda_t \lor \big( y- \tilde g_\lambda(X^x_t) \big)^+ = \xi^\lambda_t.
\end{aligned}
$$
By monotonicity of $\xi^\lambda$, this implies that $\lim_{s \downarrow t} \xi^\lambda_s = \xi^\lambda_t$.
Thus, since this process is clearly adapted, it is progressively measurable, showing the admissibility of $\xi^\lambda$; i.e., $\xi^\lambda \in \mathcal A (y)$.

We will now prove the optimality of $\xi^\lambda$ by repeating, with some modifications, the arguments of Step 1.
For $Y^{y,\lambda} := Y^{y,\xi^\lambda}$, $\tau_R:=\inf\{t \ge 0 \,| \,  |X^x_t| \ge R\}$ and for $T>0$ we find
\begin{equation}\label{eq Ito verification 2}
\begin{aligned}
  \mathbb{E}\Big[e^{-\rho \tau_R \wedge T } \tilde V(X_{\tau_R\wedge T},Y^{y,\lambda}_{\tau_R\wedge T})\Big] 
  =& \tilde V (x,y) 
     +\mathbb{E}\bigg[\int_0^{\tau_R\wedge T} e^{-\rho t} (\mathcal{L}_x - \rho) \tilde V (X^x_t,Y^{y,\lambda}_{t-})    \dd t \bigg] \\ 
    & + \mathbb{E}\bigg[\int_0^{\tau_R\wedge T} e^{-\rho t} \, \tilde V _y (X^x_t,Y^{y,\lambda}_{t-}) \dd Y_t^{y,\lambda} \bigg]\\
    &+\mathbb{E}\bigg[\sum_{0\leq t\leq \tau_R\wedge T} \Big( \Delta \tilde V (X^x_t,Y^{y,\lambda}_{t}) - \tilde V_y (X^x_t,Y^{y,\lambda}_{t-})\Delta Y^{y,\lambda}_t \Big)\bigg], 
\end{aligned}
\end{equation}
with the notations $\Delta \tilde V(X_t,Y^{y,\lambda}_{t}) =\tilde V(X_t,Y^{y,\lambda}_{t})-\tilde V(X_t,Y^{y,\lambda}_{t-})$ and $\Delta Y^{y,\lambda}_t =Y^{y,\lambda}_t - Y^{y,\lambda}_{t-} $.

Notice that, by construction of $\xi^\lambda$, if the process $Y^{y,\lambda}$ jumps at time $t$, then we have $\tilde g_\lambda (X^x_t) < \lim_{s \uparrow t} \tilde g _\lambda (X^x_{s})$ and 
$$
\tilde V_y(X^x_t, \zeta) = G(X^x_t), \quad \text{for any $\zeta \in [Y^{y,\lambda}_t, Y^{y,\lambda}_{t-}]$}.
$$
Therefore, we obtain
$$
\Delta \tilde V (X^x_t,Y^{y,\lambda}_{t}) -\tilde V _y (X^x_t,Y^{y,\lambda}_{t-})\Delta Y^{y,\lambda}_t  = G(X^x_t) (Y^{y,\lambda}_{t}-Y^{y,\lambda}_{t-})  - G(X^x_t) ( Y^{y,\lambda}_t - Y^{y,\lambda}_{t-}) = 0. 
$$
Using the latter equality in \eqref{eq Ito verification 2}, together with the fact that the process $\xi^\lambda$ increases only at times $t$ in which $\tilde V _y (X^x_t,Y^{y,\lambda}_{t-})= G(X^x_t)$ and the fact that $(\mathcal L _x - \rho)\tilde V (X^x_t,Y^{y,\lambda}_{t-}) = \Pi (X^x_t,Y^{y,\lambda}_{t-}) \ \mathbb P \otimes \dd t$-a.e., we find
\begin{equation*}
\begin{aligned}
 \tilde V (x,y)  
    =& \mathbb{E}\bigg[\int_0^{\tau_R\wedge T} e^{-\rho t} \Pi(X^x_t,Y^{y,\lambda}_{t-})    \dd t  
     + \int_0^{\tau_R\wedge T} e^{-\rho t} G (X^x_t) \dd \xi_t^\lambda \bigg] \\ 
     &  + \mathbb{E}\Big[e^{-\rho \tau_R \wedge T } \tilde V(X_{\tau_R\wedge T},Y^{y,\lambda}_{\tau_R\wedge T})\Big].
\end{aligned}
\end{equation*}
%Noticing that $$\begin{aligned}   & \int_0^{\tau_R\wedge T} e^{-\rho t} |\Pi(X^x_t,Y^{y,\lambda}_{t-}) |   d t      + \int_0^{\tau_R\wedge T} e^{-\rho t} |G (X^x_t)| d \xi_t^\lambda\\    & \leq  \int_0^{ T} e^{-\rho t} |(1+|X^x_t|^p) |   d t     + \sup_{t \leq { T}} e^{-\rho t} |(1+|X^x_t|^p)| \xi_{ T}^\lambda\end{aligned}$$ 
Taking limits (first as $T \to \infty$ and then as $R\to \infty$) we conclude that
$$
\tilde V(x,y) = J^\lambda (x,y;\xi^\lambda),
$$
thus proving the optimality of $\xi^\lambda$.
This completes the proof.

\subsection{Proof of Theorem \ref{theorem optimal control}}\label{subsection proof optimal control}
Notice first that the process $\xi^\lambda$ is admissible. In particular, the càdlàg regularity easily follows from the upper semi-continuity of $g^\lambda$, by using arguments as in the proof of Proposition 5.8 in \cite{deangelis.ferrari.federico.2017optimal}. 
In order to show Theorem \ref{theorem optimal control}, it is enough to repeat the arguments in Step 2 in Subsection \ref{subsection uniqueness HJB} with the value function $V^\lambda$ and the free boundary $g_\lambda$ defined in \eqref{eq def free boundary}, and using the fact that $V^\lambda$ is a solution of the HJB equation ({\it cf.}\ Theorem \ref{theorem value function}).

\section{General algorithm}\label{sec:general_algorithm}

In this section, we propose an RL framework to learn the reflection boundary  $g_\lambda$ (namely the policy) and the corresponding value function $V^\lambda$, without prior knowledge on the form of $g_\lambda$. Our proposed RL framework has  two versions: a model-based numerical version (see Section \ref{sec:model-based}) and a model-free learning version (see Section \ref{sec:model-free}). In the first version, where all model parameters are known, we design a Policy Iteration algorithm to numerically find $g_\lambda$. In the second version, where the environment and model parameters are unknown, we provide a sample-based Actor-critic algorithm to learn $g_\lambda$, especially in high dimensions.

 In line with the RL literature, from now on the value function $V^\lambda$ will be referred to as \textit{optimal value function}, while the profit $J^\lambda (x,y;\xi)$ of a given policy $\xi \in \mathcal A (y)$ will be referred to as  \textit{value function} associated to the policy $\xi$.

\subsection{Model-based numerical schemes}\label{sec:model-based}
In this subsection, we present a numerical scheme to solve for the optimal boundary, utilizing full knowledge of the underlying system. As such, this method is termed a model-based analysis.

Motivated by the  results in the previous section, here we focus on a subclass of control policies that can be fully characterized by some reflection boundary $g\in C(\mathbb{R}^d)$.

For a function $g\in C(\mathbb{R}^d)$, we define the associated policy $\xi^g$  of the following form:
  \begin{eqnarray}\label{eq:parameterized_policy}
      \xi^g_t = \sup_{s\leq t} \big(y-g(X_s^x)\big)^+, t \ge 0.
     % \begin{cases}
     %     0, &(x,y): x\in (0,\infty), y \leq g(0)\\
      %    y- \inf_{0\leq s \leq t} g(X^x_s), & (x,y): x\in (0,\infty), y > g(0),
     % \end{cases}
  \end{eqnarray}
%  which is fully characterized by the function $g$.  %Denote $\hat{x}_g = \inf_{x\in \mathbb{R}_+}\{g(x)=1\}$, which is well-defined under the condition that $g$ is non-decreasing.

Define the associated value function as: %{\color{red}[change $\Pi$ to $\pi$; change $X_t$ to $X_t^x$;]}
 \begin{eqnarray} 
      V^{\lambda}_g (x,y) &:= &J^{\lambda}(x,y;\xi^g) \nonumber \\
      &=&\mathbb{E} \left[ \int_0^{\infty}  e^{- \rho t} \big( ( \pi (X^x_{t} )Y_{t}^{y,\xi^g}-\lambda Y_{t}^{y,\xi^g} \log  (Y_{t}^{y,\xi^g} ) \big) \dd t 
+  \int_0^{\infty}  e^{- \rho t} G(X^x_t) \dd\xi_t^g \right]\label{eq:V_g}
  \end{eqnarray}
 subject to $Y^{y,\xi^g}_t = y-\xi_t^g$, with $\xi_t^g$ defined in \eqref{eq:parameterized_policy}.

\iffalse
\begin{proposition}
    $V^{\lambda}_g (x,y)$ defined in  \eqref{eq:V_g} is concave in $y$.
\end{proposition}

\begin{proof}
    We aim to prove that for $y_1,y_2\in[0,1]$ and $\lambda\in[0,1]$, $$V^{\lambda}_g (x,y_3) \ge \lambda V^{\lambda}_g (x, y_1) + (1-\lambda)V^{\lambda}_g (x, y_2),$$
where $y_3:=\lambda y_1 +(1-\lambda)y_2$ .

    Define $\xi_t^{1}:=\sup_{s\leq t} \big(y_1-g(X_s^x)\big)^+$ and $\xi_t^{2}:=\sup_{s\leq t} \big(y_2-g(X_s^x)\big)^+$. Then it is easy to see that
\begin{eqnarray*}
    \xi_t^{3}:=\sup_{s\leq t} \big(y_3-g(X_s^x)\big)^+ &\leq& \lambda \sup_{s\leq t} \big( y_1  -g(X_s^x)\big)^+ + (1-\lambda) \sup_{s\leq t} \big( y_2  -g(X_s^x)\big)^+ \\
    &=& \lambda \xi_t^{1} + (1-\lambda)\xi_t^{2},
\end{eqnarray*}
as $\max\{x,0\}$ is convex and nondecreasing. Hence $Y_t^i = y_i-\xi_t^{i}$ ($i=1,2,3$)  satisfies:
\begin{eqnarray*}
   Y_t^3 \ge \lambda Y_t^{1} + (1-\lambda)Y_t^{2}
\end{eqnarray*}

{\color{red}[Need to have a discussion on this.]}
    
\end{proof}
\fi

The policy in \eqref{eq:parameterized_policy} defines two areas: the exploration area and the stopping area, both associated with $g$:
  \begin{eqnarray}
      \mathcal{E}(g) &:=&\Big\{(x,y)\,\,\Big|\,\, y \leq g(x) \Big\}, \\
      \mathcal{S}(g) &:=& \Big\{(x,y)\,\,\Big|\,\,   y> g(x)  \Big\}.
\end{eqnarray}

In order to show that the updated policy performs better that the original one, we need the following technical condition, which can be easily verified in examples of dimension one (see our companion paper \cite{dianetti2026reinforcement}).
 \begin{assumption}
     \label{thm:property_g} 
    Assume the following  boundary value problem has a unique ${C}^{1}(\mathbb{R}^d\times [0,1])\cap{C}^{2}\big(\overline{\mathcal{E}(g)}\big)$ solution:
     \begin{eqnarray}
      &&(\mathcal{L}_x-\rho) u + \pi(x)\,y - \lambda y \log y =0,                   \quad \text{on} \quad\mathcal{E}(g), \label{eq:HJB_g_1}\\
     && -u_y +G(x)= 0, \quad \text{on}\quad \mathcal{S}(g).
  \end{eqnarray}
In addition, the solution satisfies $\mathcal L _x u\in \mathbb{L}_{loc}^{\infty}(\mathbb{R}^d \times[0,1])$. 
\end{assumption}

 Under Assumption \ref{thm:property_g}, we have   
\begin{eqnarray}
   V^\lambda_g(x,y) \equiv u(x,y). \label{eq:verification_thm}
\end{eqnarray}

With the results in Theorem \ref{thm:property_g}, we update the new boundary as it follows:
  \begin{equation}\label{eq:updated_policy}
    \widetilde{g}(x) :=
      \begin{cases}
       & \max \Big\{0\leq y< g(x)\,\Big| \partial^-_{yy} V^\lambda_{g}(x,y) = 0\Big\} \,\,\text{if} \,\, \partial^-_{yy} V^\lambda_{g}(x,g(x))> 0, \text{ and } \\
    &   \widetilde{g}(x) = g (x)  \qquad \text{otherwise,}  
      \end{cases}
  \end{equation}
where, for a generic function $h$ with left and right second order derivatives, we denote by $\partial_{yy}^- h(y) :=\lim_{\varepsilon\rightarrow 0-} \frac{h(y+\varepsilon)-2h(y)+ h(y-\varepsilon)}{\varepsilon^2}$ the left second-order derivative of $h$. The idea behind the updating rule is to preserve the concavity of the value function $V^\lambda_{g}$ in $y$, while eliminating regions where the second derivative becomes positive.  This differs from  \cite{dianetti2026reinforcement}, where the updating rule is designed based on the cross derivative $V_{xy}$ which is natural when $x$ is one-dimensional. In the high-dimensional regime, however, it is more natural to consider the second-order derivative with respect to $y$. Both updating rules lead to policy improvement results.

We can show that the updating rule  \eqref{eq:updated_policy} always improves in terms of the value function.

\begin{theorem}[Policy improvement]\label{prop:policy_improvement}  Assume Assumption \ref{thm:property_g} hold and $ \widetilde{g}$ is updated according to \eqref{eq:updated_policy}. 
Then, it holds that
    \begin{eqnarray}\label{eq:policy_improvement}
        V^\lambda_{\widetilde{g}}(x,y)\ge V^\lambda_g(x,y),
    \end{eqnarray}
    where $V^\lambda_g, V^\lambda_{\widetilde{g}}$ are value functions associated with policies $\xi^g, \xi^{\widetilde{g}}$; see \eqref{eq:parameterized_policy}.
%\jo{Moreover, if $\tilde g (x) = g (x)$ for any $x \in \R^d$, then $ V^\lambda_{\widetilde{g}} = V^\lambda_{g}$.}
%    In addition, $g' \in \mathcal{C}^1([0,\hat{x}_g])$, where $\hat{x}_{g'} = \inf_{x\in \mathbb{R}_+} \{g'(x)=1\}$. {\color{red}[show non-decreasing]}
\end{theorem}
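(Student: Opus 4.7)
The plan is a verification-style argument based on a weak Itô formula applied to $V^\lambda_g$ along the trajectory $(X^x_t,Y^{y,\xi^{\widetilde{g}}}_t)$ driven by the \emph{updated} policy $\xi^{\widetilde{g}}$. Since $\widetilde{g}\leq g$ by construction, one has $\mathcal{E}(\widetilde{g})\subseteq \mathcal{E}(g)$ and the controlled state lies in $\overline{\mathcal{E}(\widetilde{g})}\subseteq\overline{\mathcal{E}(g)}$ for all $t>0$, with at most a single initial jump from $y$ to $\widetilde{g}(x)$ when $y>\widetilde{g}(x)$. The regularity $V^\lambda_g\in C^1(\mathbb{R}_+\times[0,1])\cap C^2(\overline{\mathcal{E}(g)})$ with $\partial_{xx}V^\lambda_g\in\mathbb{L}^\infty_{loc}$ established in Theorem \ref{thm:property_g} makes the weak Itô formula of \cite[Theorem 4.2]{angelis2019solvable} applicable, localized via $\tau_R=\inf\{t\geq 0:X^x_t\geq R\}$.

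Substituting the PDE $(\mathcal{L}_x-\rho)V^\lambda_g=-(\pi-\rho\kappa)y+\lambda y\log y$ which is valid on $\mathcal{E}(g)\supseteq \mathcal{E}(\widetilde{g})$, noting that the continuous part of $\xi^{\widetilde{g}}$ increases only on $\{Y_{t-}=\widetilde{g}(X_t)\}$ and that the only jump of $Y$ occurs at $t=0$, and letting $T,R\to\infty$ (justified by the sublinear growth of $V^\lambda_g$ read off from the explicit formula in Theorem \ref{thm:property_g}) yields the identity
\begin{equation*}
    V^\lambda_{\widetilde{g}}(x,y) - V^\lambda_{g}(x,y) \;=\; -\mathbb{E}\bigg[\int_0^\infty e^{-\rho t}\,\partial_y V^\lambda_g\big(X^x_t,\widetilde{g}(X^x_t)\big)\,d\xi^{\widetilde{g},c}_t\bigg] \;+\; \mathbf{1}_{\{y>\widetilde{g}(x)\}}\,\big[V^\lambda_g(x,\widetilde{g}(x))-V^\lambda_g(x,y)\big].
\end{equation*}
The desired inequality $V^\lambda_{\widetilde{g}}\geq V^\lambda_g$ thus reduces to showing that both contributions on the right-hand side are non-negative, and both are controlled by the single sign condition $\partial_y V^\lambda_g(x,z)\leq 0$ for every $z\in[\widetilde{g}(x),g(x)]$: this makes the boundary integrand non-positive, and it also yields $V^\lambda_g(x,\widetilde{g}(x))-V^\lambda_g(x,y)=-\int_{\widetilde{g}(x)}^{\min(y,g(x))}\partial_y V^\lambda_g(x,z)\,dz\geq 0$, since $V^\lambda_g(x,\cdot)$ is constant on $[g(x),1]$.

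Verifying this sign condition is the technical heart of the argument and the main obstacle. When the update is inactive ($\widetilde{g}=g$), there is nothing to prove: the boundary integrand vanishes by smooth fit and the bracket is zero. When the update is active, i.e.\ $\partial_{xy}^-V^\lambda_g(x,g(x))<0$ and $\widetilde{g}(x)$ is the maximal $y<g(x)$ with $\partial_{xy}V^\lambda_g(x,y)=0$, differentiating the smooth-fit identity $\partial_y V^\lambda_g(x,g(x))=0$ along the free boundary gives $\partial_{yy}V^\lambda_g(x,g(x))=-\partial_{xy}^-V^\lambda_g(x,g(x))/g'(x)>0$, so $\partial_y V^\lambda_g(x,\cdot)<0$ immediately below $g(x)$. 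To propagate this sign all the way down to $\widetilde{g}(x)$, I would combine the explicit representation $V^\lambda_g(x,y)=A(y)x^{\alpha_-}+H_\pi(x)y-\kappa y-\tfrac{\lambda}{\rho}y\log y$ on $\mathcal{E}(g)$ with the formula $\partial_y V^\lambda_g(x,y)=\int_{g^{-1}(y)}^{x}\partial_{xy}V^\lambda_g(s,y)\,ds$, obtained by integrating in $x$ and using the smooth-fit boundary condition $\partial_y V^\lambda_g(g^{-1}(y),y)=0$. The maximality in the definition of $\widetilde{g}$ forces $\partial_{xy}V^\lambda_g(s,y)$ to keep its sign — strictly negative — on the relevant integration range, which delivers $\partial_y V^\lambda_g(x,z)\leq 0$ on $[\widetilde{g}(x),g(x)]$ and completes the proof. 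The delicate point is ensuring that this sign propagation really works uniformly in $x$ through the maximality clause; this is where the design of the updating rule \eqref{eq:updated_policy} genuinely enters the argument.
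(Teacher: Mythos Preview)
Your approach is essentially the paper's: apply a weak It\^o formula to $V^\lambda_g$ along the trajectory driven by $\xi^{\widetilde{g}}$, substitute the PDE valid on $\mathcal{E}(g)\supseteq\mathcal{E}(\widetilde{g})$, and reduce everything to the sign condition $\partial_y V^\lambda_g(x,y)\le 0$ on $\mathcal{S}(\widetilde{g})\cap\mathcal{E}(g)$, checked via the same identity $\partial_y V^\lambda_g(x,y)=\int_{g^{-1}(y)}^{x}\partial_{xy}V^\lambda_g(s,y)\,ds$ that the paper writes down.

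One point to tighten: the maximality clause in the update rule \eqref{eq:updated_policy} is in $y$ for each \emph{fixed} first argument, so what it directly yields is $\partial_{xy}V^\lambda_g(s,\cdot)<0$ on $(\widetilde{g}(s),g(s))$ for each $s$. To conclude negativity of the integrand at every $s\in[g^{-1}(y),x]$ you still need $y\ge\widetilde{g}(s)$ for all such $s\le x$, which amounts to $\widetilde{g}$ being non-decreasing --- not a statement the maximality clause alone delivers. The paper handles this by asserting at the outset that $\widetilde{g}$ again satisfies Assumption~\ref{ass:g} ``by the design of the algorithm'' and then simply states the integral inequality; the careful verification that $\widetilde{g}$ is strictly increasing is deferred to the induction in Theorem~\ref{thm:policy_convergence}, using the explicit expression $\partial_{xy}V^\lambda_g(x,y)=\alpha_-A'(y)x^{\alpha_--1}+H_\pi'(x)$. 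So the ``delicate point'' you flag is genuine, and both your sketch and the paper treat it at the same level of detail within this theorem.
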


\begin{proof}
     First of all, $V^\lambda_g \in {C}^{2}\left(\overline{\mathcal{E}(g)}\right)$ according to Assumption \ref{thm:property_g}. %Therefore, by the design of the algorithm, $\widetilde{g}$ satisfies the conditions in Assumption \ref{ass:g}. Hence Theorem \ref{thm:property_g} also applies to $\widetilde{g}$.
    Denote $Y_t^{y, \xi^{\widetilde{g}}}$ as the process under control $\xi^{ \widetilde{g}}$, with $ \widetilde{g}$ defined in  \eqref{eq:updated_policy}. Fix the initial condition $(x,y)$ and take $R>0$. Set $\tau_R:=\inf\{t \ge 0: \|X^x_t\| \ge R\}$. Apply Ito's formula in the weak version to $V^\lambda_g$ (see \cite[Chapter 8, Section VIII.4, Theorem 4.1]{fleming2006controlled} and \cite[Theorem 4.2]{angelis2019solvable}) up to the
stopping time $\tau_R\wedge T$ for some $T > 0$. We then obtain
   {\allowdisplaybreaks \begin{eqnarray}
     &&\mathbb{E}\Big[e^{-\rho (\tau_R\wedge T)} V^\lambda_g(X^x_{\tau_R\wedge T},Y^{y, \xi^{\widetilde{g}}}_{\tau_R\wedge T})\Big] \nonumber\\
     &=& V^\lambda_g(x,y)  
     +\mathbb{E}\Big[\int_0^{\tau_R\wedge T} e^{-\rho t} \Big(\mathcal{L}_x V^\lambda_g(X^x_t,Y^{y, \xi^{\widetilde{g}}}_{t-})-\rho V^\lambda_g(X^x_t,Y^{y, \xi^{\widetilde{g}}}_{t-})\Big) \dd t\Big] \nonumber\\
      && + \mathbb{E}\Big[\int_0^{\tau_R\wedge T} e^{-\rho t} \, \partial_y V^\lambda_g(X^x_t,Y^{y, \xi^{\widetilde{g}}}_{t-}) \dd Y_t^{ \widetilde{g}} \Big]\nonumber\\
      &&+\mathbb{E}\Big[\sum_{0\leq t\leq \tau_R\wedge T}  e^{-\rho t}\Big( \Delta V^\lambda_g(X^x_t,Y^{y, \xi^{\widetilde{g}}}_{t}) - \partial_y V^\lambda_g(X^x_t,Y^{y, \xi^{\widetilde{g}}}_{t-})\Delta Y_t^{y, \xi^{\widetilde{g}}} \Big)\Big]\nonumber\\
      &=& V^\lambda_g(x,y) -\mathbb{E}\Big[\int_0^{\tau_R\wedge T} e^{-\rho t} \Big(\pi(X^x_t)Y_t^{y, \xi^{\widetilde{g}}} - \lambda  Y_t^{y, \xi^{\widetilde{g}}} \log\Big(Y_t^{y, \xi^{\widetilde{g}}}\Big) \Big) \dd t\Big] 
      \nonumber\\
      && +\mathbb{E}\Big[ \int_0^{\tau_R\wedge T} e^{-\rho t} \, \partial_y V^\lambda_g(X^x_t,Y^{y, \xi^{\widetilde{g}}}_{t-}) \dd (Y^{y, \xi^{\widetilde{g}}})_t^{\rm cont} \Big]\nonumber\\
      && +\mathbb{E}\Big[\sum_{0\leq t\leq {\tau_R\wedge T}} e^{-\rho t}\Big( \Delta V^\lambda_g(X^x_t,Y^{y, \xi^{\widetilde{g}}}_{t-})  \Big)\Big]\nonumber\\
      &\geq &V^\lambda_g(x,y) -\mathbb{E}\Big[\int_0^{\tau_R\wedge T} e^{-\rho t} \Big(\pi(X^x_t)Y_t^{y,\xi^{\widetilde{g}}} - \lambda  Y_t^{y, \xi^{\widetilde{g}}} \log\Big(Y_t^{y, \xi^{\widetilde{g}}}\Big) \Big)\dd t\nonumber\\
     && + {\int_0^{\tau_R\wedge T} e^{-\rho t}G(X_t^x)\dd \xi^{\tilde g}_t}\Big],\label{eq:V_g_ineq}
\end{eqnarray}}
\noindent with the notations $\Delta V^\lambda_g(X^x_t,Y^{y,\xi^{\widetilde{g}}}_{t}) =V^\lambda_g(X^x_t,Y^{y,\xi^{\widetilde{g}}}_{t})-V^\lambda_g(X^x_t,Y^{y,\xi^{\widetilde{g}}}_{t-})$ and $\Delta Y_t^{y,\xi^{\widetilde{g}}} =Y_{t}^{y,\xi^{\widetilde{g}}}-Y_{t-}^{y,\xi^{\widetilde{g}}}$. 
The second equality holds because 
    \begin{eqnarray*}
      &&   \mathbb{E}\Big[\int_0^{\tau_R\wedge T} e^{-\rho t} \Big(\mathcal{L}_x\,u(X^x_t,Y^{ y,\xi^g}_{t-})-\rho\, V^\lambda_g(X^x_t,Y^{ y,\xi^g}_{t-})\Big) \dd t \Big] \\
         &=& -\mathbb{E}\Big[\int_0^{\tau_R\wedge T} e^{-\rho t} \Big(\pi(X^x_t)Y_t^{ y,\xi^g} - \lambda  Y_t^{ y,\xi^g} \log\Big(Y_t^{ y,\xi^g}\Big) \Big)\dd t\Big],
    \end{eqnarray*} 
    as pure jumps could only possibly happen at time $0$,  $\xi^{\widetilde{g}}$ keeps $(X^x_t,Y_t^{y,\xi^{\widetilde{g}}})$ within $\overline{\mathcal{E}(g)}$ for $t >0$ and  \eqref{eq:HJB_g_1}. 
    The last inequality holds by the design of  \eqref{eq:updated_policy}. 
    In particular,  for $(x,y)\in \mathcal{S}( \widetilde{g})\cap \mathcal{E}(g)\subseteq \mathcal{E}(g)$, by mean value theorem,
\begin{eqnarray*}
    \partial_y V^\lambda_g(x,y) =  \partial_y V^\lambda_g(x,g(x)) -\partial_{yy} V^\lambda_g(x,\tilde{y})  \leq G(x),
\end{eqnarray*}
with $\tilde y\in[y,g(x)]$, where the last inequality holds due to the design of \eqref{eq:updated_policy}.
    
    In addition, on $\mathcal{S}( \widetilde{g})\cap \mathcal{E}(g)$,
\begin{eqnarray*}
    \Delta V^\lambda_g(X^x_t,Y^{y,\xi^{\widetilde{g}}}_{t}) &=&V^\lambda_g(X^x_t,Y^{y,\xi^{\widetilde{g}}}_{t})-V^\lambda_g(X^x_t,Y^{y,\xi^{\widetilde{g}}}_{t-}) \\
    &\ge & \int_0^{\Delta Y^{y,\xi^{\widetilde{g}}}_t} \partial_y V^\lambda_g\left(X^x_t,Y^{y,\xi^{\widetilde{g}}}_{t-}+u\right) {\rm d} u \ge G(X_t^x)\Delta Y^{y,\xi^{\widetilde{g}}}_t,
\end{eqnarray*}    
as $\Delta Y^{y,\xi^{\widetilde{g}}}_t \leq 0$.

When taking limits as $R \rightarrow\infty$ we have $\tau_R\wedge T\rightarrow T$, $\mathbb{P}$-a.s.  By standard properties of Brownian motion it is easy to prove that the integral terms in the last expression on the right-hand side of \eqref{eq:V_g_ineq} are uniformly bounded in $\mathbb{L}^2(\Omega, \mathbb{P})$, hence uniformly integrable. Moreover,
$V^\lambda_g$ (taking the form as in \eqref{eq:V_g}) has sublinear growth by straightforward calculation. Then we also take limits as $T\rightarrow \infty$ and it follows
that
\begin{eqnarray*}
    V^\lambda_{\widetilde{g}}(x,y) \ge V^\lambda_{g}(x,y),
\end{eqnarray*}
which completes the proof. 
%\jo{Notice that, if $\tilde g (x) = g (x)$ for any $x \in \R^d$, then all the inequalities become equalities, so that we obtain $ V^\lambda_{\widetilde{g}} = V^\lambda_{g}$.}
\end{proof}

Repeat the updating procedure \eqref{eq:updated_policy} iteratively, we have the general heuristic described in Algorithm \ref{alg1}. Algorithm \ref{alg1} falls under the category of Policy Iteration method \cite{sutton2018reinforcement,thomas2016data}, which contains two steps: (1) a Policy Evaluation step that calculates the value function of a given policy ({\it cf.} line \ref{line:33} of Algorithm \ref{alg1}) and (2) a Policy Improvement step that aims to update the policy in the direction of improving the value function ({\it cf.} line \ref{line:44} of Algorithm \ref{alg1}).
The idea behind our specific design is that, we start with an initial $g_0(x)$ that has a sufficiently large exploration region $\mathcal{E}(g_0)$. Then, in each iteration $k$, we update  $g_k$ function  ``downwards'' into $\mathcal{E}(g_k)$ according to the Hessian information ({\it cf.} line \ref{line:33} of Algorithm \ref{alg1}), in which $u_k$ has a better regularity ({\it cf.} line \ref{line:44} of Algorithm \ref{alg1}).

\begin{algorithm}[H]
\caption{Policy Iteration for Entropy-regularized Optimal Stopping (PI-$\lambda$-OS)} \label{alg1}
\begin{algorithmic}[1]
\State Initialize $g_0(x)$ (which is above the optimal $g^*$).
\For{$k=0,1,\cdots,K-1$}
\State\label{line:33} Find $u_k(x,y) $  a ${C}^1(\mathbb{R}^d\times [0,1])\cap{C}^2\left(\overline{\mathcal{E}(g_k)}\right)$ solution to the following equations:
  \begin{eqnarray}
       &&(\mathcal{L}_x-\rho) u_k + \pi(x)\,y - \lambda y \log y, \quad -u_y +G(x)\leq 0 \quad \text{on} \quad\mathcal{E}(g_k),\label{eq:uk-1}\\
     && -\partial_y\, u_k = G(x), \quad \text{on}\quad \mathcal{S}(g_k).\label{eq:uk-2}
  \end{eqnarray}
 \State \label{line:44} Update the strategy
 \begin{equation*}
      {g}_{k+1}(x) =
      \begin{cases}
       & \max \Big\{y< g_k(x)\,\Big| \partial_{yy} V^\lambda_{g}(x,g_k(x)) = 0 \Big\} \,\,\text{if} \,\, \partial^-_{yy} V^\lambda_{g_k}(x,g(x))>0, \text{ and } \\
    &  {g}_{k+1}(x) = g_k (x)  \qquad \text{otherwise,}  
      \end{cases}
 \end{equation*}
\EndFor
\end{algorithmic}
\end{algorithm}
 This iterative scheme is inspired by \cite{kumar2004numerical} for a one-dimensional singular control problem. However, the convergence result in that work requires a second-order condition to hold throughout the entire iteration process, which is difficult to verify. In addition, it is a generalization of the algorithm in \cite{dianetti2026reinforcement}, which is tailored towards the one-dimensional real option problem.

Note that Algorithm \ref{alg1} is a numerical iterative scheme, specifically a model-based numerical learning algorithm, as its implementation requires knowledge of $V^\lambda_g$. In reinforcement learning, however, we generally do not have access to model parameters or the value function for any given policy. Therefore, in the next section, we will develop a model-free learning algorithm. Additionally, to leverage the capabilities of neural networks and design a learning algorithm suitable for higher-dimensional spaces, we employ one neural network to parameterize the value function and another to parameterize the policy. This approach eliminates the need for exhaustive enumeration of discretized state and action spaces.

\subsection{Model-free deep-learning-based  learning algorithms in high-dimensions}
\label{sec:model-free}

We first specify the agent-environment interactions in the learning setting, highlighting what information is available to the learning agent and what is unknown to the agent. We then move to the discussion of the deep-learning-based learning algorithm, which is model-free.

 \subsubsection{Agent-environment interactions.}
 To highlight the distinction between model-based numerical solution and model-free learning, we describe the interactions between the agent and the unknown environment. Although the underlying dynamics are continuous, in practice rewards can only be observed at discrete time points \cite{jia2025accuracy}. The practical agent–environment interactions are summarized in Algorithm \ref{al:agent-environment}.

\begin{algorithm}[H]
\caption{Agent-Environment Interaction} \label{al:agent-environment}
\begin{algorithmic}[1]
\State Input: initial position $x$, policy described in terms of the boundary $g$, observation interval $\delta$ and horizon truncation $T$. Take $K=T/\delta$.
\State Output: for  $k=1,2,\cdots, K$ with $t_k=k\delta$,
\begin{eqnarray}
\int_{t_k}^{t_{k+1}}  e^{- \rho s} \left(\big( ( \pi (X_{s} )Y_{s}^{\xi^g}-\lambda Y_{s}^{\xi^g} \log  (Y_{t}^{\xi^g} ) \big) \dd s 
+   G(X_s) \dd\xi_s^g\right),  \label{eq:instantanuous_reward}
\end{eqnarray}
 and $(X_{t_k},Y_{t_k})$. \Comment{Here the policy $\xi^g$ is defined in  \eqref{eq:parameterized_policy} in reponse to the given boundary $g$.}
\end{algorithmic}

\end{algorithm} 

We have a few remarks for Algorithm \ref{al:agent-environment} in place. 
\begin{remark} \
\begin{enumerate}
    \item  Every time the agent acquires the running reward functions for a given policy $g$ under a single trajectory, \eqref{eq:instantanuous_reward} will be generated under \eqref{eq:x-process} with an independent Brownian motion $B$ and initialization $X_0=x$.

\item Note that the learning agent can only observe the realized instantaneous reward \eqref{eq:instantanuous_reward} at time $t_{k+1}$ ($0\leq k\leq K-1$), but does not have access to its functional form.  
\end{enumerate}
 
\end{remark}

Now we are ready to state the deep-learning algorithm.

\subsubsection{Deep learning algorithm.} Recall from \eqref{eq:V_g}, by dynamic programming principle, we have for $t'>0$
{\begin{eqnarray*} 
      V^{\lambda}_g (x,y) 
      &=&\mathbb{E} \left[ \int_0^{\infty}  e^{- \rho t} \big( ( \pi (X^x_{t} )Y_{t}^{y,\xi^g}-\lambda Y_{t}^{y,\xi^g} \log  (Y_{t}^{y,\xi^g} ) \big) \dd t 
+  \int_0^{\infty}  e^{- \rho t} G(X^x_t) \dd\xi_t^g \right]\\
&=&\mathbb{E} \left[ \int_0^{t'}  e^{- \rho t} \left(\big( ( \pi (X^x_{t} )Y_{t}^{y,\xi^g}-\lambda Y_{t}^{y,\xi^g} \log  (Y_{t}^{y,\xi^g} ) \big) \dd t 
+   G(X^x_t) \dd\xi_t^g\right) + e^{-\rho t'} V_g^\lambda(X_{t'},Y_{t'}) \right],
  \end{eqnarray*} 
  which implies that
  \begin{eqnarray*}
    0 &=&   \mathbb{E}\Bigg[\underbrace{ \int_0^{t'}  e^{- \rho t} \left(\big( ( \pi (X^x_{t} )Y_{t}^{y,\xi^g}-\lambda Y_{t}^{y,\xi^g} \log  (Y_{t}^{y,\xi^g} ) \big) \dd t 
+   G(X^x_t) \dd\xi_t^g\right)}_{\textrm{reward increment}} \\
&&\qquad \qquad + \underbrace{e^{-\rho t'} V^{\lambda}_g (X_{t'},Y_{t'})-V^{\lambda}_g (x,y)}_{\textrm{value difference}}\Bigg]=:\mathbb{E}\big[\delta_{[0,t']}(x,y)\big].
  \end{eqnarray*}
 In the RL literature, $\delta_{[0,t']}(x,y)$ is commonly referred to as the temporal-difference (TD) error, whose expectation is zero \cite{jia2022policy,sutton2008convergent,sutton2009fast}. The TD error consists of two components: the reward increment and the difference in the value function. The former can be directly observed from the environment along a single trajectory. When learning with a parameterized value function (such as those of a neural network), the objective is often formulated as minimizing the variance of the TD error, over admissible parameters.
 
More precisely, denote $\bar{V}^{\eta}(x,y)$ as the approximated value function under trainable parameter $\eta$. We use a model-free critic learning (TD(0) style). Define the TD target (under policy $\xi^g$) by
\begin{eqnarray*}
    {\rm TD}^g_{[t,t+\delta t]}(X_t,Y_t) &:=& \int_t^{t+\delta t}  e^{- \rho (s-t)} \left(\big( ( \pi (X_{s} )Y_{s}^{\xi^g}-\lambda Y_{s}^{\xi^g} \log  (Y_{s}^{\xi^g} ) \big) \dd s 
+   G(X_s) \dd\xi_s^g\right) \\
&&+e^{-\rho (\delta t)}\bar V^\eta(X_{t+\delta t},Y_{t+\delta t}),
\end{eqnarray*}
which is often approximated by the following in practical implementation
\begin{eqnarray*}
    \pi(X_t)Y^{\xi^g}_t\delta\, t-\lambda Y^{\xi^g}_t \log (Y^{\xi^g}_t) \delta\, t + G(X_t) (\xi^g_{t+\delta\,t}-\xi^g_t) + (1-\rho\delta)  \bar V^\eta(X_{t+\delta t},Y^{\xi^g}_{t+\delta t}).
\end{eqnarray*}
As a consequence, the critic loss function is defined as  
\begin{eqnarray}\label{eq:critic_loss}
    \mathcal{L}_{\textrm{critic}}(\eta) = \widehat{\mathbb{E}}\left[\sum_{k=0}^{K-1} \left(\bar V^\eta(X_{t_k},Y_{t_k})-{\rm TD}^g_{[t_k,t_{k+1}]}(X_{t_k},Y_{t_k})\right)^2\right],
\end{eqnarray}
where $\widehat{\mathbb{E}}$ denotes approximating the underlying expectation using independent trajectories from the environment (see Algorithm \ref{al:agent-environment}).
}

If we parameterize the boundary $g$ by trainable parameter $\theta$, then the gradient-based surrogate loss function follows:
\begin{eqnarray}\label{eq:actor_loss}
    \mathcal{L}_{\textrm{policy}} (\theta) = \widehat{\mathbb{E}}\Big[\sum_{k=0}^K\ell (X_{t_k};\theta)\Big],
\end{eqnarray}
with 
\begin{eqnarray*}
 \ell (x;\theta) = \begin{cases}
     \Big(\partial_{yy}\bar V^\eta (x,g^\theta(x))\Big)^2, &\textrm{ if } \partial_{yy} \bar V^\eta (x,g^\theta(x)) >0,\\
     0 & \textrm{otherwise}.
 \end{cases}   
\end{eqnarray*}

\begin{algorithm}[h]
\caption{Actor-Critic Algorithm} \label{alg:actor-critic}
\begin{algorithmic}[1]
\State Initialization of the critic parameter $\eta^0$ and the policy parameter $\theta^0$; The numbers of inner iterations $N$ and outer iterations $L$. Learning rates $\alpha_1>0$ and $\alpha_2>0$ for the critic and policy, respectively.
\For{$\ell=0, 1, \ldots, L-1$}
\State Update the  critic parameter:
\begin{eqnarray*}
    \eta^{\ell+1} = \eta^{\ell} -\alpha_1 \nabla \mathcal{L}_{\textrm{critic}}(\eta^\ell).
\end{eqnarray*}
$\qquad$ where $\mathcal{L}_{\textrm{critic}}$ is calculated using output from Algorithm \ref{al:agent-environment}.
\For{$n=0, 1, \ldots, N-1$}
\State Update the policy parameter:
\begin{eqnarray*}
    \theta^{n+1} = \theta^{n} -\alpha_2 \nabla \mathcal{L}_{\textrm{policy}}(\theta^n).
\end{eqnarray*}
\EndFor
\EndFor
\end{algorithmic}
\end{algorithm}

We have a  few remarks in place.
\begin{remark} \
\begin{enumerate}
    \item In the continuous-time RL literature, TD-type objectives have been investigated in the settings of regular control subject to deterministic dynamics \cite{doya2000reinforcement} as well as stochastic dynamics \cite{jia2022policy,jia2022policy2}. However, these developments have not addressed problems involving optimal stopping or singular control. Our framework provides the first result of this kind in the literature.
    \item Two time-scale update: For each actor (policy) update, we update the critic (value) multiple times. This two-time scale approach, with faster updates for the value function and slower updates for the policy, enables more stable and efficient learning in complex environments.
\end{enumerate}
\end{remark}

\section{Numerical Experiments}\label{sec:numerics}
In this section, we test the performance of our Actor-critic Algorithm \ref{alg:actor-critic} in two settings. The first setting is a one-dimensional toy case, which allows us to use a numerical PDE solution as a benchmark for comparison. The second case demonstrates the power of our algorithm in high dimensions.
\subsection{One dimensional case as a benchmark}
\medskip

\emph{Numerical set--up.}
The state process 
$X_t\in\mathbb{R}$ follows a discretized Ornstein--Uhlenbeck dynamics
\[
X_{t+\delta}
= X_t + \big(-\theta X_t\,\delta  + \sigma \sqrt{\delta }\,Z_t\big),
\]
with $\theta=0.2$, $\sigma=0.4$, $\delta =0.05$, and discount factor
$\gamma = e^{-\rho\Delta t}$ with $\rho=0.05$, and $Z_t$ are iid standard Gaussian random variables.
The control acts through a boundary function $g(x)\in[0,1]$ governing the auxiliary state $Y_t\in[0,1]$,
\[
Y_{t+\delta} = \min\big(Y_t, g(X_t)\big), \qquad \Delta_t = Y_t - Y_{t+\delta}\ge 0,
\]
so that $\Delta_t$ represents the amount of singular intervention.
The instantaneous cost combines a running term
$a_1 X_t^2 Y_t + \lambda Y_t\log Y_t$ and a singular cost $a_2 X_t^2\Delta_t$,
with parameters $(a_1,a_2,\lambda) = (2,1,0.3)$.

\medskip
\emph{Actor network and critic network.}
We approximate the value function with $V_\eta$, using a   fully connected feed-forward network $\eta$ ({\tt ValueNet}) with two hidden layers of width 64 and ReLU activations, that inputs $(x,y)$
and outputs a scalar. It consists of two hidden layers with 64 ReLU units each.
The control boundary  is approximated by $g^\theta(x)$, with a separate network ({\tt PolicyNet})
of the same architecture but with a sigmoid output layer mapping into $[0,1]$. %An exponential moving average copy of the policy network is maintained for stability during evaluation.

\medskip

\emph{Online learning scheme.}
At each iteration, we simulate $L=100$ trajectories of length $T=200$
under the current policy.
We update the critic via the TD(0) objective with $N=50$ critic gradient steps per actor update.
The learning rate for the actor and critic are both $\alpha_1=\alpha_2=10^{-5}$.

%\emph{(ii) Offline Monte Carlo refinement:} after online training, we retrain the critic on Monte Carlo value estimates computed over $1000$ randomly sampled states, using 200--step returns with 40 replicates, minimizing normalized MSE for 40 epochs.

%\emph{(iii) Monte Carlo policy improvement:} for each grid point $x$, we evaluate 32 candidate boundary levels and compute approximate returns via multi--replicate rollouts (400 steps, 30 replicates). The resulting maximizers $g_{\mathrm{MC}}(x)$ are fit as a supervised regression target over 200 epochs to yield an improved policy. A final Monte Carlo refinement phase is run under the improved policy, and diagnostics (boundary evolution, value function slices, MC error plots) are used to assess convergence. Random seeds are fixed for reproducibility and gradients are clipped to ensure numerical stability.

\medskip

\emph{HJB benchmark.} Note that for the numerical example we consider, there is no explicit solution. To validate our results, we compute a reference solution by numerically solving the  HJB equation on the $(x,y)$-grid using finite differences. The computational domain is $x\in[-5,5]$ discretized over $121$ grid points, and
$y\in[0,1]$ over $32$ points.  The optimal value function and free boundary extracted from this solver serve as benchmarks for assessing the accuracy of our learned policy and critic.

\medskip

\emph{Experiment performance.} The experiment results demonstrate that the proposed Actor-critic method (without knowing the underlying model) closely replicates the benchmark HJB solution. In Figure~\ref{fig:value}, the learned value function exhibits the same qualitative structure as the finite-difference HJB solution, with discrepancies primarily concentrated near the free boundary region where the value function transitions between active and inactive control. The relative error plot shows that these deviations remain small and spatially localized. Figure~\ref{fig:g_slice-v} further highlights this agreement: the learned boundary function 
$g(x)$ matches the shape and location of the numerically computed optimal boundary, while the one-dimensional slice of the value function at 
$y=0.5$ almost coincides with the HJB reference. Overall, these comparisons indicate that the actor–critic scheme not only captures the global geometry of the optimal value landscape, but also accurately resolves the free boundary that characterizes the optimal singular control policy.

\begin{figure}[H]
    \centering
    \includegraphics[width=0.3\linewidth]{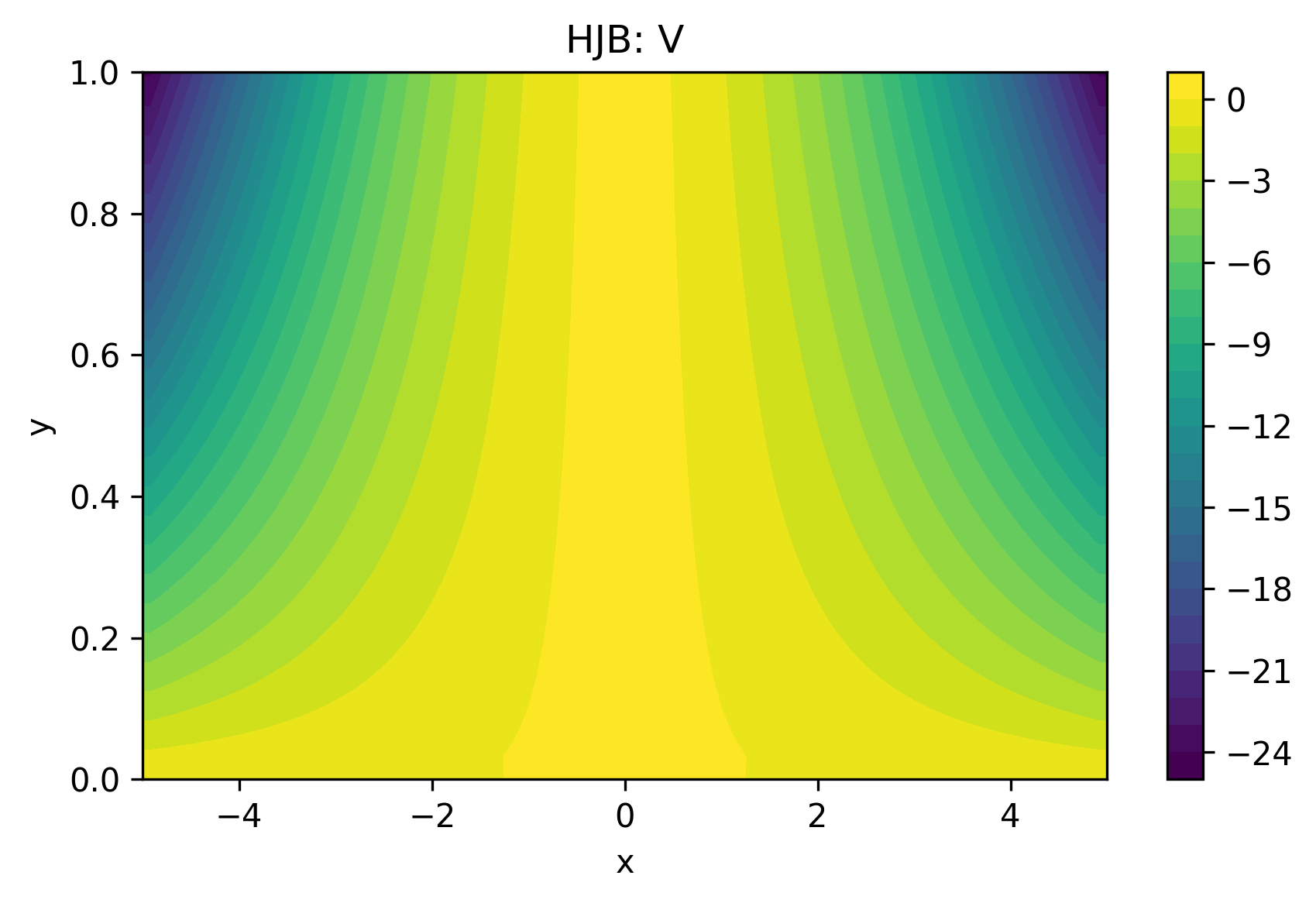}
    \includegraphics[width=0.3\linewidth]{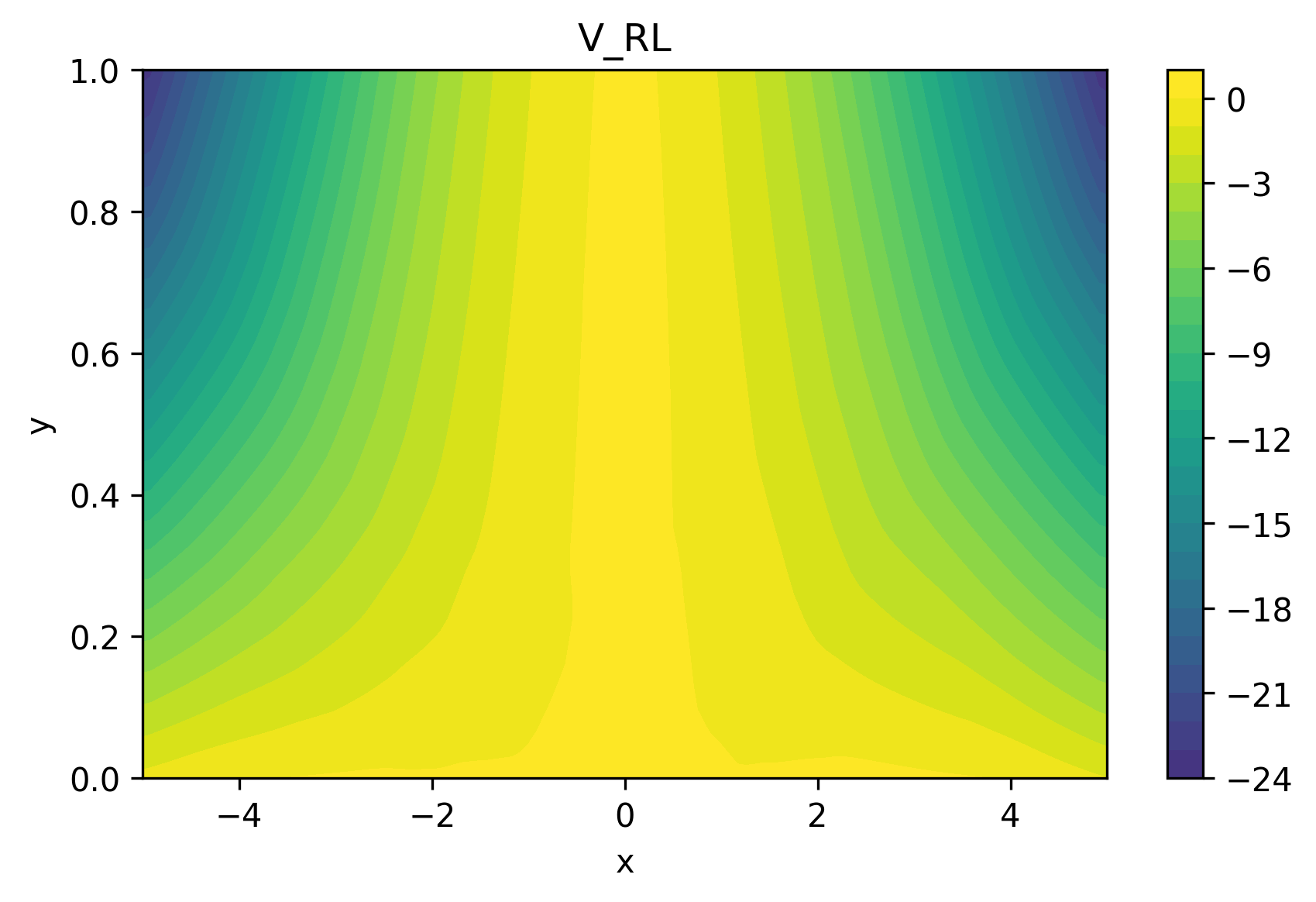}
    \includegraphics[width=0.3\linewidth]{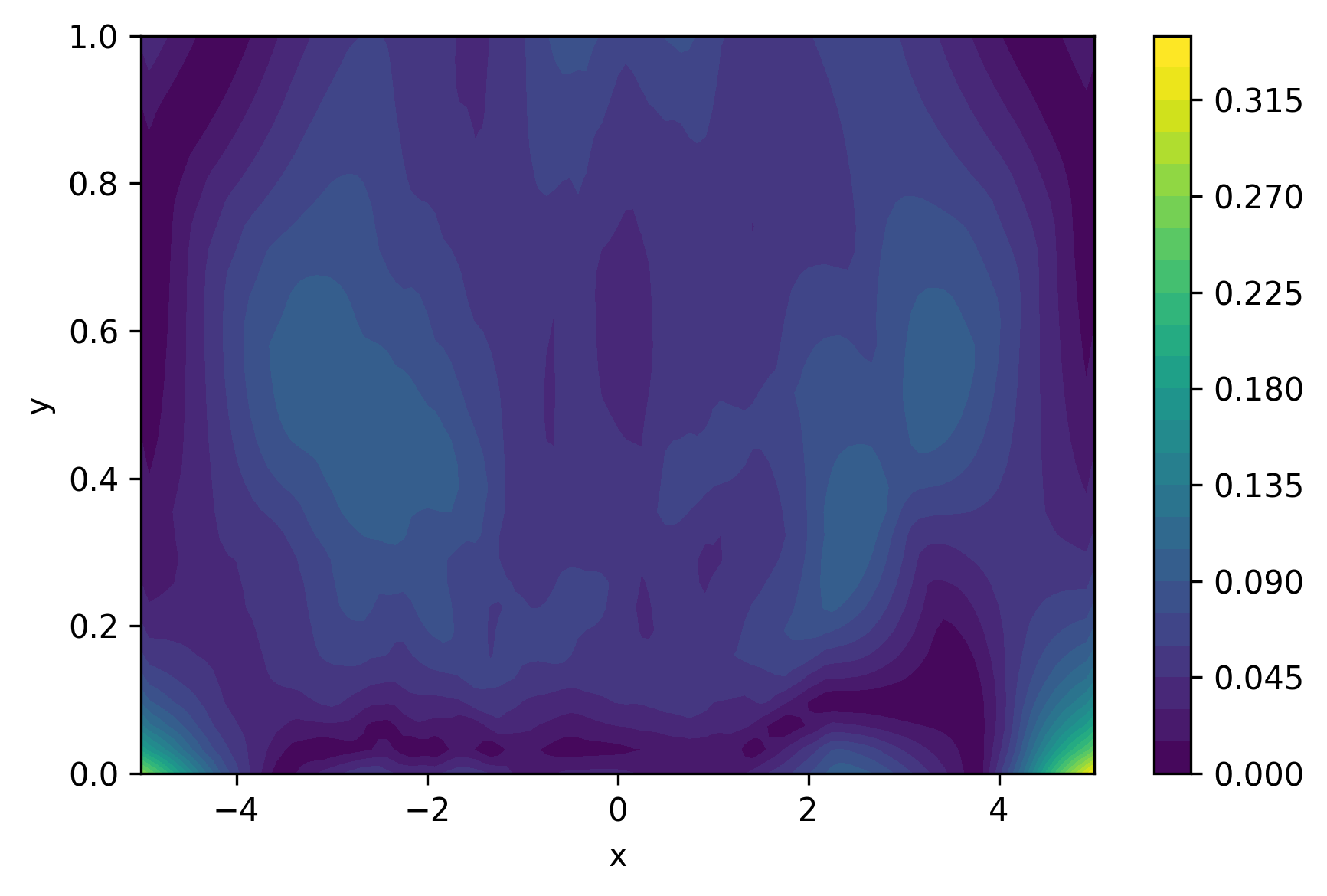}
    \caption{Value functions: (left) value function from numerical HJB solution. (middle) value function solved from our actor-critic method. (right) relative error.}
    \label{fig:value}
\end{figure}

\begin{figure}[H]
    \centering
    \includegraphics[width=0.35\linewidth]{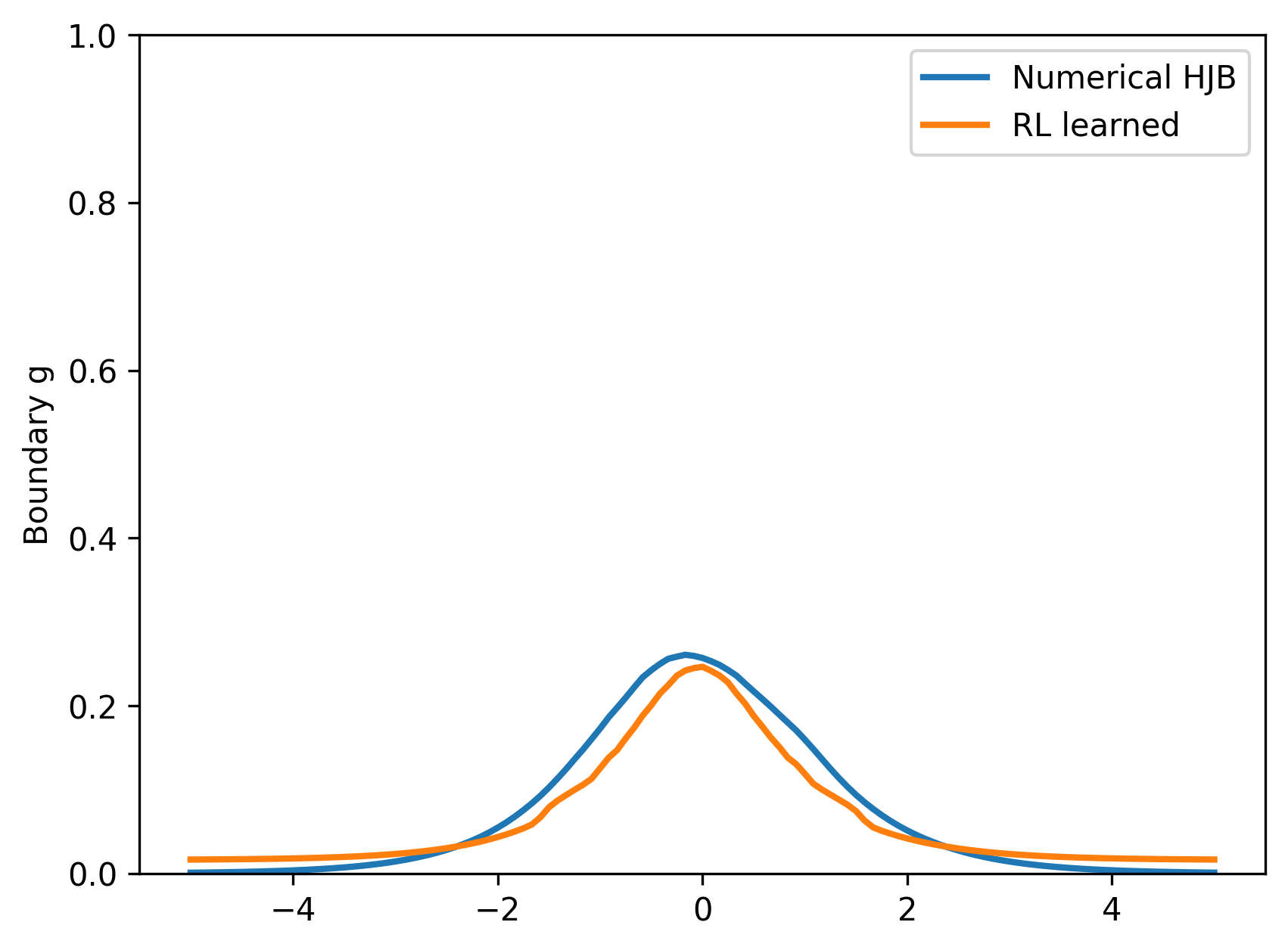}
     \includegraphics[width=0.45\linewidth]{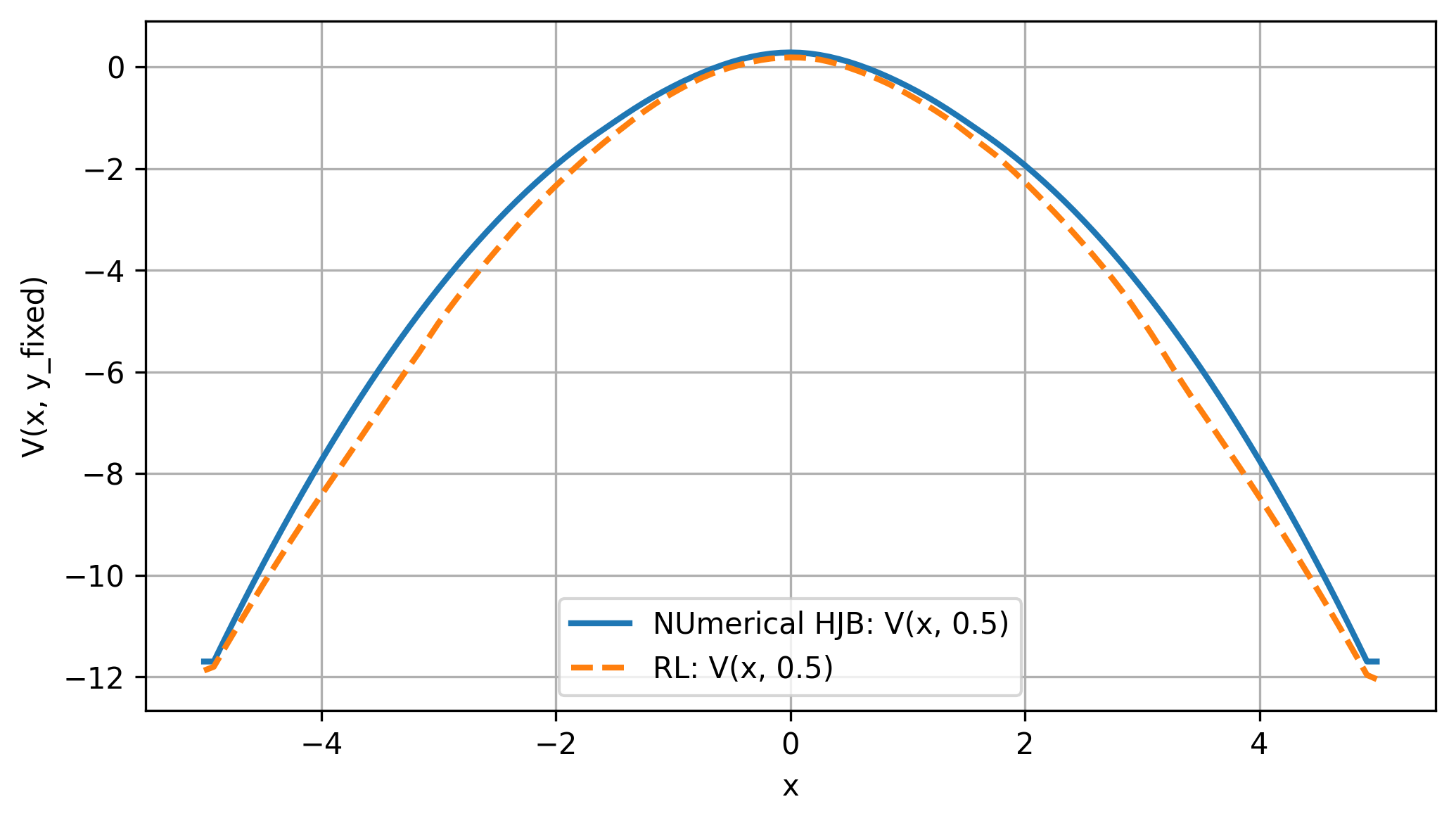}
    \caption{Comparison of the boundary and sliced value function. (left): comparison of the boundary $g$. (right): sliced value function at $y=0.5$.}
    \label{fig:g_slice-v}
\end{figure}

\subsection{High-dimensional case}

\emph{Numerical set-up.} We study a high-dimensional singular stochastic control problem posed on the state space $(X_t,Y_t)\in\mathbb{R}^{10}\times[0,1]$. The process $X_t=(X_t^{(1)},\dots,X_t^{(10)})$ evolves as a heterogeneous Ornstein--Uhlenbeck diffusion
\[
X_{t+\delta}^{(i)}=X_{t}^{(i)}+\Big(-\theta_i X_t^{(i)}\,\delta+\sigma\,\sqrt{\delta}Z_t^{(i)}\Big),\qquad i=1,\dots,10,
\]
where $\delta=0.05$, $\sigma=0.4$ and the drift parameters increase linearly from $0.1$ to $3.0$. In addition, $Z_t^{(i)}$ are iid standard Gaussian random variables. The controlled component $Y_t$ represents the remaining budget of exploration, implemented in discrete time as
\[
Y_{t+\delta}=\min\{Y_t,g(X_t)\}, \qquad \Delta_t:=Y_t-Y_{t+\delta}\ge0,
\]
where $g:\mathbb{R}^{10}\to [0,1]$ is the boundary. The instantaneous cost  is
\[
c(x,y,\Delta)=a_1(x)y+\lambda\, y\log y+a_2(x)\Delta,
\]
with $\lambda=0.3$. Here $a_1(x) = \log(1+e^{\tilde a_1(x)})$ and $a_2(x) = \log(1+e^{\tilde a_2(x)})$ are the smooth counterparts of $\tilde a_1$ and $\tilde a_2$, where 
\[
\tilde a_1(x)=\sum_{i=1}^{10}a_{1,i}x_i^2 +2.0 x_1-1.4x_5 ,\qquad
\tilde a_2(x)=\sum_{i=1}^{10}a_{2,i}x_i^2 +0.4 x_1+0.2x_5.
\]
The coefficients $a_{1,i}$ decrease linearly from $5.0$ to $0.5$, whereas $a_{2,i}$ increase from $0.1$ to $5.0$, inducing heterogeneous incentives across coordinates. The discount rate is set as $\rho=0.05$.

\begin{figure}[H]
    \centering
\includegraphics[width=0.5\linewidth]{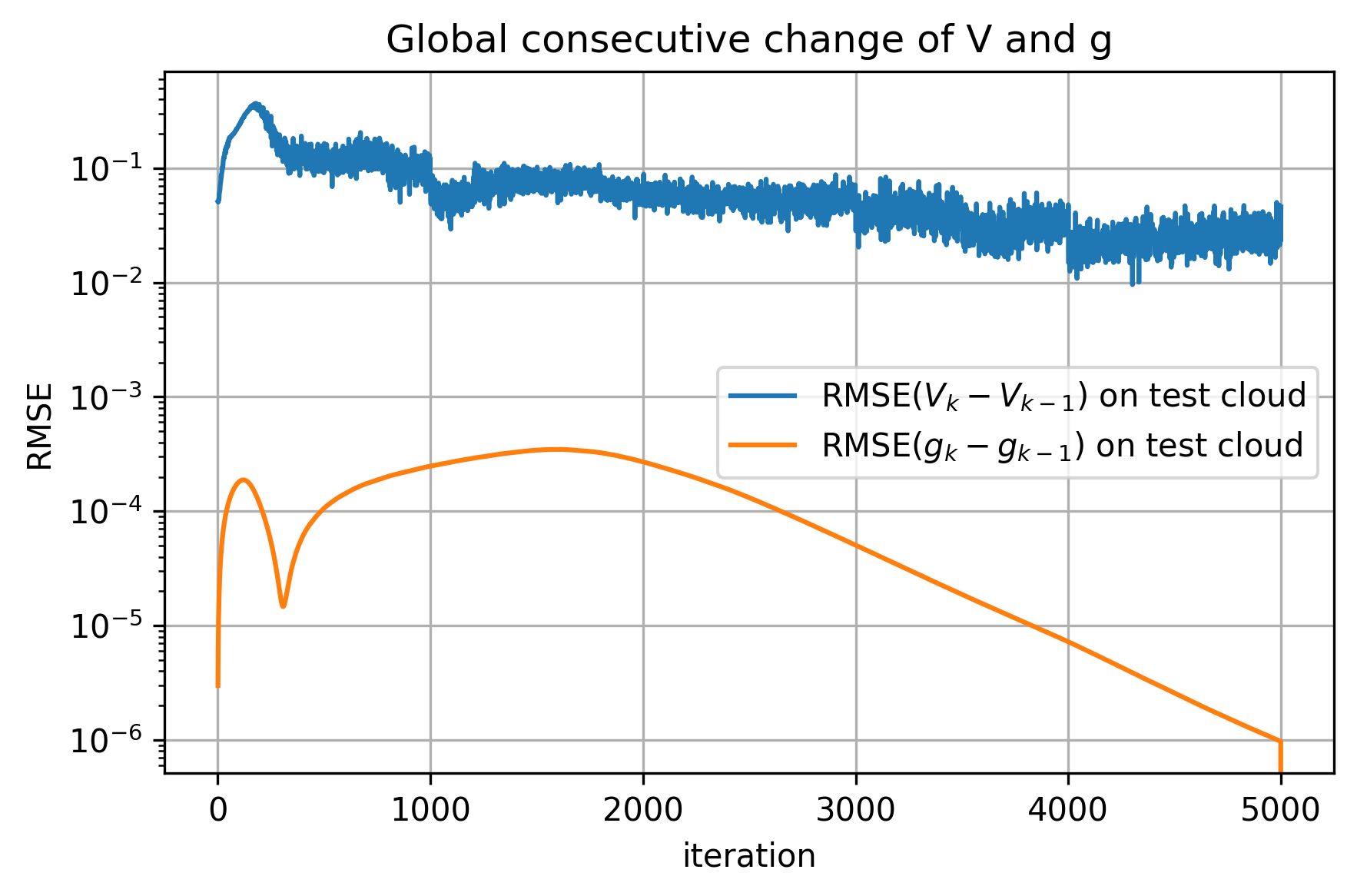}
    \caption{Convergece rate: consecutive changes of the value and policy updates (tested on a cloud of 4000 prefixed points).}
    \label{fig:consecutive_changes}
\end{figure}

\begin{figure}[H]
    \centering
 \includegraphics[width=0.9\textwidth]{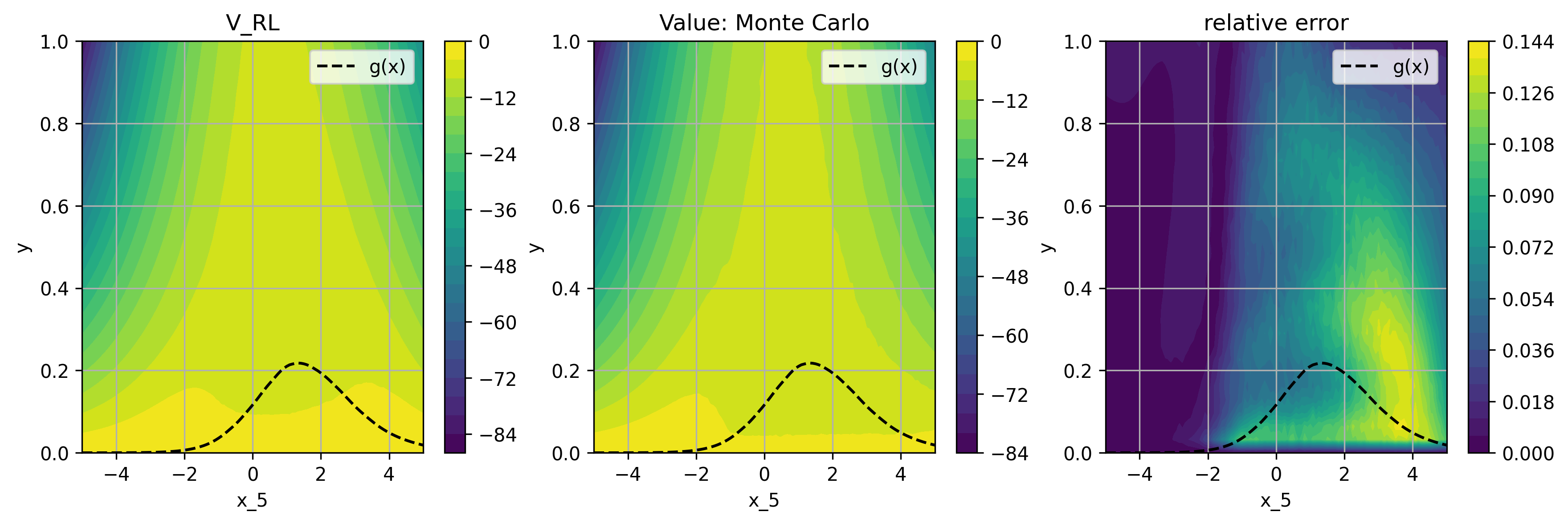}
  \includegraphics[width=0.9\textwidth]{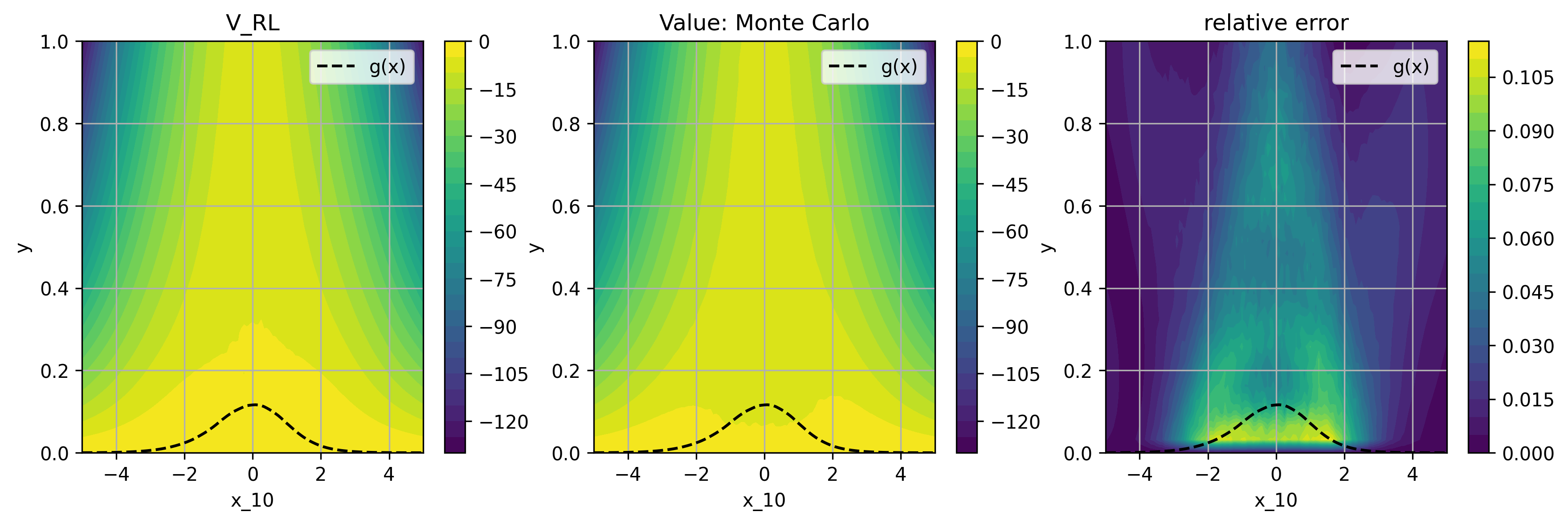}
    \caption{Sliced value function for visualization (top row for $x_5$,  and bottom row for $x_{10}$). Left columns are the learned value function, middle columns are the simulated value function by Monte Carlo (benchmark), and the right columns are the relative error.}
    \label{fig:value_big_comparison}
\end{figure}

\medskip

\emph{Actor network and critic network.} Similar as beofre,
 the critic $V^\eta(x,y)$ is a two-layer ReLU network with input dimension $11$ and scalar output. The actor $g^\theta(x)$ is a two-layer ReLU network with sigmoid output mapping $\mathbb{R}^{10}\to(0,1)$. Both networks have hidden width $64$, are trained with Adam (learning rate $10^{-5}$), and use gradient clipping.
\medskip

\emph{Online learning scheme.} The online phase consists of $L=5{,}000$ iterations. Each iteration simulates $1{,}000$ trajectories of length $T=200$ using the current actor. The critic is updated via TD$(0)$ using $N=50$ gradient steps per iteration. Convergence is monitored on a fixed test cloud of $2{,}000$ states, recording root mean-square error (RMSE) changes of successive actor and critic outputs.

\emph{Evaluation.} Note that a numerical HJB solver cannot achieve arbitrarily accurate results in high dimensions. Hence we do not compare with HJB benchmarks. Instead, we report the convergence rate and perform sanity checks on the output value functions and policies. Additionally, after training, we use the Monte Carlo method to approximate the value function of the output policy and compare it to our  output value function (parameterized by neural network).
\medskip

\emph{Results.}
The numerical results provide strong evidence that the proposed Actor-critic algorithm successfully learns both the value function and the optimal singular control boundary in a genuinely high-dimensional non-radial setting. Figure \ref{fig:consecutive_changes} reports the global RMSE of successive value and policy iterates on a fixed test cloud of sampled states. The policy error decays over three orders of magnitude and stabilizes to machine-precision fluctuations, while the value error plateaus at a small level consistent with the intrinsic variance of Monte Carlo sampling and the critic approximation capacity. Figure \ref{fig:value_big_comparison} compares two-dimensional slices of the learned value function with Monte Carlo benchmarks across two representative coordinates. In each case the learned surfaces replicate the qualitative shape and monotonic structure of the Monte Carlo reference, and the residual plots confirm that discrepancies are  small relative to the overall value scale. Note that we introduced an asymmetric term in $x_5$ in the cost function, which shifts the center of 
$g$ toward positive values, consistent with the learned policy in Figure~\ref{fig:value_big_comparison}-top. In contrast, since the cost is symmetric in 
$x_{10}$, the projection of the learned policy $g$ onto the $x_{10}$-coordinate in Figure~\ref{fig:value_big_comparison}-top is symmetric.

\newpage

\appendix
\section{Proof of Lemma \ref{lemma uniqueness stopping time}}\label{appendix proof uniqueness stopping time}

Setting $\hat V := V -G $, by It\^o's formula, problem \eqref{eq:value_classic_os} reduces to the OS problem
$
\hat V (x) = \sup_{\tau \in \mathcal T} \mathbb E \big[ \int_0^\tau e^{-\rho t} \hat \pi ( X_t^x) dt  \big],
$ 
for which the value function  $\hat V$ is the unique $W^{2,2}_{loc}(\R)$-a.e.\ solution to the HJB variational inequality (see. e.g., Section 5.2 in \cite{pham2009continuous})
\begin{equation}
\max \big\{ \left(\mathcal{L}_{x} -\rho \right) \hat V  + \hat \pi  , - \hat V \big\}=0.
\end{equation}
By Theorem 2.3.5 in \cite{lamberton.2009optimal}, the minimal stopping time $\tau^*$ is the first hitting time to the region  $\hat{\mathcal S} : = \{ \hat V = 0 \}$. 
Also, since $\hat \pi $ is increasing, $\hat V$ is nondecreasing so that $\hat{\mathcal S}$ is a closed interval (possibly empty). 
If $\hat{\mathcal S}$ is empty, then $\tau^* = \infty$ is the unique optimal stopping time, by minimality of $\tau^*$.
On the contrary, if  $\hat{\mathcal S}$ is nonempty, then we can find $x^* \in \R$ such that $\hat{\mathcal S} : = \{ \hat V = 0 \} = (-\infty, x^* ]$.
Notice moreover that necessarily we have $x^* \leq \bar x$, as otherwise we would have $(\mathcal L_x - \rho) \hat V + \hat \pi = \hat \pi > 0$ in the interval $(\bar x, x^*)$, thus contradicting the HJB equation.
In other words, 
\begin{equation} 
    \label{eq negative pi}
    \text{$(\mathcal L_x - \rho) \hat V + \hat \pi = \hat \pi < 0$ in $(-\infty,x^*]$ and $(\mathcal L_x - \rho) \hat V + \hat \pi < 0$, a.e.\ in $\mathbb R$.}
\end{equation}

Next, assume by contradiction that there exists another optimal stopping time $\tau$. 
By minimality of $\tau^*$ we have $\tau^* \leq \tau$.
Moreover, by the martingale optimality principle (see e.g.\ Theorem 2.2 at p.\ 29 in \cite{peskir.shiryav.2006optimal}), the process $M = (M_t)_t$ defined by
$$
M_t := e^{-\rho t \land \tau} \hat V (X^x_{t\land \tau}) + \int_0^{t\land \tau} e^{-\rho s} \hat \pi (X^x_s) \dd s,
$$
is an $\mathbb F ^W$-martingale and, using It\^o's formula (thanks to the regularity of $\hat V$) we have
$$
\E [M_t] := \hat V(x) +\E \bigg[ \int_0^{t\land \tau} e^{-\rho s} \big((\mathcal L_x - \rho) \hat V + \hat \pi \big) (X^x_s) \dd s  \bigg].
%+ \int_0^{t\land \tau} e^{-\rho s}  D_x \hat V (X^x_s) \sigma (X^x_s) d W_s.
$$
Notice that, since $\sigma^2(x) \geq c >0$,  after the time $\tau^*$ the Brownian motion pushes the state $X_t^x$ in the region $(-\infty, x^*)$ with probability 1.
Thus,  by \eqref{eq negative pi}  and the characterization of $\tau^*$ we have
$$
\begin{aligned}
\E [M_\tau] &= \hat V(x)+ \E \bigg[
\int_0^{  \tau} e^{-\rho s} \big((\mathcal L_x - \rho) \hat V + \hat \pi \big) (X^x_s) \dd s \bigg] \\
& =  \hat V(x)+ \E \bigg[ \mathds 1 _{ \{ \tau > \tau^* \} } \int_{\tau^*}^{\tau} e^{-\rho s} \big((\mathcal L_x - \rho) \hat V + \hat \pi \big) (X^x_s) \dd s  \bigg] < \hat V (x) = \E[M_0],
\end{aligned}
$$
where the last inequality follows from assuming that the event $\tau > \tau^*$ has positive probability. 
This contradicts the fact that $M$ is a martingale, and thus show that the optimal stopping time $\tau^*$ is unique.

\section{Proof of the auxiliary estimates}\label{appendix proof of estimates}
%\subsection{Proof of \eqref{eq estimate SDE growth} -- \eqref{eq estimate SDE semiconv SUP}}
We prove each of the estimate \eqref{eq estimate SDE growth}--\eqref{eq estimate SDE semiconv SUP} separately. 
\smallbreak \noindent
\emph{Proof of \eqref{eq estimate SDE growth}.}
Noticing that $|x| (1+|x|) \leq \frac32(1+|x|^2)$, we have
$$
\begin{aligned}
x b(x)+\frac{q-1}{2}|\sigma(x)|^{2} & \leq \left(\frac{3}{2} L+(q-1) L^{2}\right)\left(1+|x|^{2}\right).
\end{aligned}
$$
Hence, the estimate \eqref{eq estimate SDE growth} follows from Theorem 4.1 at p. 60 in \cite{mao2008}.

\smallbreak \noindent
\emph{Proof of \eqref{eq estimate SDE Lip}.}
We adopt the rationale in the proof of Theorem 4.1 at p. 60 in \cite{mao2008}.
Set $\Delta_{t}:=X_{t}^{\bar{x}}-X_{t}^{x}$. 
For any $\varepsilon>0$, using It\^o's formula together with elementary estimates, we find
\begin{equation}\label{eq useful Lip}
\begin{aligned}
\Big(\varepsilon+& \left|\Delta_{t}\right|^{2} \Big)^{\frac{q}{2}} \\
=&  \left(\varepsilon+|\bar{x}-x|^{2}\right)^{\frac{q}{2}} 
  + q \int_{0}^{t}\left(\varepsilon+\left|\Delta_{t}\right|^{2}\right)^{\frac{q-2}{2}} \Delta_{t}^{*}\left(\sigma\left(X_{t}^{\bar{x}}\right)-\sigma\left(X_{t}^{x}\right)\right) \dd W_{t} \\
& +q \int_{0}^{t}\left(\varepsilon+\left|\Delta_{t}\right|^{2}\right)^{\frac{q-2}{2}} \Delta_{t}^{*} \left(b\left(X_{t}^{\bar{x}}\right)-b\left(X_{t}^{x}\right)\right) \dd t \\
& +q \frac{q-2}{2} \int_{0}^{t}\left(\varepsilon+\left|\Delta_{t}\right|^2\right)^{\frac{q-4}{2}}\left|\Delta_{t}^* \left(\sigma\left(X_{t}^{\bar{x}}\right)-\sigma\left(X_{t}^{x}\right) \right) \right|^{2} \dd t  \\
& +\frac{q}{2} \int_{0}^{t}\left(\varepsilon+\left|\Delta_{t}\right|^{2}\right)^{\frac{q-2}{2}}\left|\sigma\left(X_{t}^{\bar{x}}\right)-\sigma\left( X_{t}^{x}\right)\right|^{2} \dd t \\
\leq & \left(\varepsilon+|\bar{x}-x|^{2}\right)^{\frac{q}{2}}
+q \int_{0}^{t}\left(\varepsilon+\left|\Delta_{t}\right|^{2}\right)^{\frac{q-2}{2}} \Delta_{t}^{*} \left(\sigma \left(X_{t}^{\bar{x}}\right)-\sigma \left(X_{t}^{x}\right)\right) \dd W_t \\
& +q \int_{0}^{t}\left(\varepsilon+|\Delta_t|^2\right)^{\frac{q-2}{2}}\left[\Delta_{t}^{*}\left(b\left(X_{t}^{\bar{x}}\right)-b\left(X_{t}^{x}\right)\right)+\frac{q-1}{2}\left|\sigma\left(X_{t}^{\bar{x}}\right)-\sigma\left(X_{t}^{x}\right)\right|^2 \right] \dd t  \\
\leq &
\left(\varepsilon+|\bar{x}-x|^{2}\right)^{\frac{q}{2}}  +q \int_{0}^{t}\left(\varepsilon+\left|\Delta_{t}\right|^{2}\right)^{\frac{q-2}{2}} \Delta_{t}^{*}\left(\sigma\left(X_{t}^{\bar{x}}\right)-\sigma\left(X_{t}^{x}\right)\right) \dd W_{t} \\
& +q\left(L+\frac{q-1}{2} L^{2}\right) \int_{0}^{t}\left(\varepsilon+|\Delta_t|^{2}\right)^{\frac{q}{2}} \dd t .
\end{aligned}
\end{equation}
Taking expectations and using Gr\"onwall inequality, we obtain
$$
\mathbb{E}\left[\left(\varepsilon+|\Delta_t|^{2}\right)^{\frac{q}{2}}\right] \leq
\left(\varepsilon+|\bar{x}-x|^{2}\right) ^{\frac q 2 }e^{q\left(L+\frac{q-1}{2} L^{2}\right) t},
$$
which completes the proof sending $\varepsilon \rightarrow 0$.

\smallbreak \noindent
\emph{Proof of \eqref{eq estimate SDE semiconv}.}
Set
$$
\begin{aligned}
& \hat{X}_{t}:=\delta X_{t}^{\bar{x}}+(1-\delta) X_{t}^{x}, \quad X_{t}^{\delta}:=X_{t}^{\delta \bar{x}+(1-\delta) x}, \quad \Gamma_{t}:=\hat{X}_{t}-X_{t}^{\delta}, \\
& \hat{b}_{t}:=\delta b\left(X_{t}^{\bar{x}}\right)+(1-\delta) b\left(X_{t}^{x}\right), \quad 
\hat{\sigma}_{t}:=\delta \sigma\left(X_{t}^{\bar{x}}\right)+(1-\delta) \sigma\left(X_{t}^{x}\right).
\end{aligned}
$$
By It\^o's formula, with elementary estimates we obtain 
$$
\begin{aligned}
\left(\varepsilon+\left|\Gamma_{t}\right|^{2}\right)^{\frac{q}{2}}=\varepsilon^{\frac{q}{2}} 
& 
+q\int_{0}^{t}\left(\varepsilon+\left|\Gamma_{t}\right|^{2}\right)^{\frac{q-2}{2}} \Gamma_{t}^{*}\left[\left(\hat{b}_{t}-b\left(X_t^\delta\right)\right) \dd t+\left(\hat{\sigma}_{t}-\sigma\left(X_t^\delta\right)\right) \dd W_{t}\right] \\
& +q \frac{q-2}{2} \int_{0}^{t}\left(\varepsilon+\left|\Gamma_{t}\right|^{2}\right)^{\frac{q-4}{2}}\left| \Gamma_{t}^* \left( \hat{\sigma}_{t}-\sigma\left(X_{t}^{\delta} \right) \right)\right|^{2} \dd t \\
& +\frac{q}{2} \int_{0}^{t}\left(\varepsilon+\left|\Gamma_{t}\right|^{q-2}\right)^{\frac{q-2}{2}}\left|\hat{\sigma}_{t}-\sigma\left(X_t^\delta\right)\right|^{2} \dd t \\
%\leq \varepsilon^{\frac{q}{2}} & +q \int_{0}^{t}\left(\varepsilon+\left|\Gamma_{t}\right|^{2}\right)^{\frac{q-2}{2}} \Gamma_{t}^{*}\left[\left(\hat{b}_{t}-b\left(X_t^\delta\right)\right) d t+\left(\hat{\sigma}_{t}-\sigma\left(X_t^\delta\right)\right) d W_{t}\right] \\& +q \frac{q-1}{2} \int_{0}^{t}\left(\varepsilon+\left| \Gamma_{t}^{2}\right| \right)^{\frac{q-2}{2}}\left|\hat{\sigma}_{t}-\sigma\left(X_t^\delta\right)\right|^{2} d t \\
\leq \varepsilon^{\frac{q}{2}} & +q \int_{0}^{t}\left(\varepsilon+\left|\Gamma_{t}\right|^{2}\right)^{\frac{q-2}{2}}\left[\Gamma_{t}^{*}\left(\hat{b}_{t}-b\left(X_t^\delta\right)\right)+\frac{q-1}{2}\left|\hat{\sigma}_{t}-\sigma\left(X_{t}^{\delta}\right)\right|^{2}\right] \dd t \\
& +q \int_{0}^{t}\left(\varepsilon+\left|\Gamma_{t}\right|^2 \right)^{\frac{q-2}{2}} \Gamma_{t}^{*}\left(\hat{\sigma}_{t}-\sigma\left(X_{t}^{\delta}\right)\right) \dd W_{t}.
\end{aligned}
$$
Using the properties of $b$ and $\sigma$ in Condition \ref{condition b sigma} in Assumption \ref{assumption general}, we continue with
$$
\begin{aligned}
 \Big( \varepsilon+&\left|\Gamma_{t}\right|^{2}\Big)^{\frac{q}{2}}  \\
 \leq& 
 \varepsilon^{\frac q2} +q \int_{0}^{t}\left(\varepsilon+\left|\Gamma_{t}\right|^2 \right)^{\frac{q-2}{2}} \Gamma_{t}^{*}\left(\hat{\sigma}_{t}-\sigma\left(X_{t}^{\delta}\right)\right) \dd W_{t} \\
& +q \int_{0}^{t}\left(\varepsilon+\left|\Gamma_{t}\right|^{2}\right)^{\frac{q-2}{2}}\left[\Gamma_{t}^{*}\left(b\left(\hat{X}_{t}\right)-b\left(X_{t}^{\delta}\right)\right)+(q-1)\left|\sigma\left(\hat{X}_{t}\right)-\sigma\left(X_{t}^{\delta}\right)\right|^{2}\right] \dd t \\
& +q  \int_{0}^{t}\left(\varepsilon+\left|\Gamma_{t}\right|^{2}\right)^{\frac{q-2}{2}}\left[\Gamma_{t}^{*}\left(\hat{b}_{t}-b\left(\hat{X}_{t}\right)\right)+(q-1)\left|\hat{\sigma}_{t}-\sigma\left(\hat{X}_{t}\right)\right|^{2}\right] \dd t  \\
\leq& \varepsilon^{\frac{q}{2}} +q \int_{0}^{t}\left(\varepsilon+\left|\Gamma_{t}\right|^2 \right)^{\frac{q-2}{2}} \Gamma_{t}^{*}\left(\hat{\sigma}_{t}-\sigma\left(X_{t}^{\delta}\right)\right) \dd W_{t} \\
& +q  \int_{0}^{t}\left(\varepsilon+\left|\Gamma_{t}\right|^{2}\right)^{\frac{q-2}{2}}\left(L+(q-1) L^{2}\right)\left|\Gamma_{t}\right|^{2} \dd t \\
& +q  \int_{0}^{t}\left(\varepsilon+\left|\Gamma_{t}\right|^{2}\right)^{\frac{q-2}{2}}\Big(\left|\Gamma_{t}^{*}\right| \delta(1-\delta) L\left|X_{t}^{\bar{x}}-X_{t}^{x}\right|^{2} \\
& \qquad \qquad \qquad \qquad \qquad \quad +(q-1) \delta^{2}(1-\delta)^{2} L^{2}\left|X_{t}^{\bar{x}}-X_{t}^{x}\right|^{4}\Big) \dd t  .
\end{aligned}
$$
Next, since $y \bar y \leq \frac12 (y^2 +\bar y^2)$, after rearranging some of the terms we arrive at the expression
$$
\begin{aligned}
\left(\varepsilon+\left|\Gamma_{t}\right|^{2}\right)^{\frac{q}{2}}  \leq& \varepsilon^{\frac{q}{2}} +q \int_{0}^{t}\left(\varepsilon+\left|\Gamma_{t}\right|^2 \right)^{\frac{q-2}{2}} \Gamma_{t}^{*}\left(\hat{\sigma}_{t}-\sigma\left(X_{t}^{\delta}\right)\right) \dd W_{t}\\
& +q  \int_{0}^{t}\left(\varepsilon+\left|\Gamma_{t}\right|^{2}\right)^{\frac{q-2}{2}}\left(L+(q-1) L^{2}\right)\left|\Gamma_{t}\right|^{2} \dd t  \\
& + q  \int_{0}^{t}\left(\varepsilon+\left|\Gamma_{t}\right|^{2}\right)^{\frac{q-2}{2}}
\bigg(\frac{ L}{2}\left(\left|\Gamma_{t}\right|^{2}+\delta^2(1-\delta)^2\left|X_{t}^{\bar{x}}-X_{t}^{x}\right|^{4}\right) \\
& \qquad \qquad \qquad \qquad \qquad \quad +(q-1)  L^{2} \delta^2(1-\delta)^2 \left|X_{t}^{\bar{x}}-X_{t}^{x}\right|^{4}\bigg) \dd t   \\
\leq&  \varepsilon^{\frac{q}{2}} +q \int_{0}^{t}\left(\varepsilon+\left|\Gamma_{t}\right|^2 \right)^{\frac{q-2}{2}} \Gamma_{t}^{*}\left(\hat{\sigma}_{t}-\sigma\left(X_{t}^{\delta}\right)\right) \dd W_{t} \\
& +q  \int_{0}^{t}\left(\varepsilon+\left|\Gamma_{t}\right|^{2}\right)^{\frac{q-2}{2}}
\bigg[\left(\frac{3}{2} L+(q-1) L^{2}\right)\left|\Gamma_{t}\right|^{2} \\
& \qquad \qquad \qquad \qquad \qquad \quad 
+\delta^2(1-\delta)^2 \left(\frac{L}{2}
+(q-1) L^{2}\right)\left|X_{t}^{\bar{x}}-X_{t}^{x}\right|^{4}\bigg] \dd t.
\end{aligned}
$$
%Set $\hat{L}_{1}:=q \left(\frac{3}{2} L+(q-1) L^{2}\right), \ \hat{L}_{2}:=q\left(\frac{L}{2}+(q-1) L^{2}\right), \ \alpha:=\frac{q}{q-2}, \ \alpha^{\prime}:=\frac{q}{2} \quad$ and notice that $\frac{1}{\alpha}+\frac{1}{\alpha}=1$.
Next, using Young inequality with exponent $\frac{q}{q-2}$ and its conjugate $\frac{q}{2}$, we obtain
\begin{equation}\label{eq useful semiconc SDE estimate}
\begin{aligned}
 \Big(\varepsilon+&\left|\Gamma_{t}\right|^{2}\Big)^{\frac{q}{2}}  \\
 \leq  &  
 \varepsilon^{\frac q 2}   +q \int_{0}^{t}\left(\varepsilon+\left|\Gamma_{t}\right|^2 \right)^{\frac{q-2}{2}} \Gamma_{t}^{*}\left(\hat{\sigma}_{t}-\sigma\left(X_{t}^{\delta}\right)\right) \dd W_{t} \\
& + q \left(\frac{3}{2} L+(q-1) L^{2}\right) \int_{0}^{t} \left(\varepsilon+\left|\Gamma_{t}\right|^{2}\right)^{\frac{q}{2}}  \dd t \\
& + \delta^2(1-\delta)^2 q\left(\frac{L}{2}+(q-1) L^{2}\right) \int_0^t \left( \frac{q-2}{q}  \left(\varepsilon+\left|\Gamma_{t}\right|^{2}\right)^{\frac{q}{2}}+\frac{2}{q} \left|X_{t}^{\bar{x}}-X_{t}^{x}\right|^{2 q}\right) \dd t  \\
%= & \varepsilon^{\frac q 2}+\left(q\left(\frac{3}{2} L+(q-1) L^{2}\right)+(q-2)\left(\frac{L}{2}+(q-1) L^{2}\right)\right) \int_{0}^{t} \mathbb{E}\left[\left(\varepsilon+\left|\Gamma_{t}\right|^{2}\right)^{\frac{q}{2}}\right] d t \\& \quad+\left(L+2(q-1) L^{2}\right) \int_{0}^{t} \mathbb{E}\left[\left|X_{t}^{\bar{x}}-X_{t}^{x}\right|^{2 q}\right] d t \\
\leq & \varepsilon^{\frac q 2} +q \int_{0}^{t}\left(\varepsilon+\left|\Gamma_{t}\right|^2 \right)^{\frac{q-2}{2}} \Gamma_{t}^{*}\left(\hat{\sigma}_{t}-\sigma\left(X_{t}^{\delta}\right)\right) \dd W_{t} \\ 
& +\left((2 q-1) L+2(q-1)^{2} L^{2}\right) \int_{0}^{t}  \left(\varepsilon+\left|\Gamma_{t}\right|^{2}\right)^{\frac{q}{2}} \dd t \\ 
& \quad \quad + \delta^2(1-\delta)^2 \left(L+2(q-1) L^{2}\right) \int_{0}^{t}  \left|X_{t}^{\bar{x}}-X_{t}\right|^{2 q}  \dd t.
\end{aligned}
\end{equation}
Taking expectations and %from which we conclude that
%$$\begin{aligned}\mathbb{E}\left[\left(\varepsilon+\left|\Gamma_{t}\right|^{2}\right)^{\frac{q}{2}}\right] \leq& \varepsilon^{\frac q 2}+\left((2 q-1) L+2(q-1)^{2} L^{2}\right) \int_{0}^{t} \mathbb{E}\left[\left(\varepsilon+\left|\Gamma_{t}\right|^{2}\right)^{\frac{q}{2}}\right] d t \\ & \quad \quad +\left(L+2(q-1) L^{2}\right) \int_{0}^{t} \mathbb{E}\left[\left|X_{t}^{\bar{x}}-X_{t}^x\right|^{2 q}\right] d t.\end{aligned}$$
using Grönwall,  \eqref{eq estimate SDE Lip} implies that
$$
\mathbb{E}\left[\left(\varepsilon+\left|\Gamma_{t}\right|^{2}\right)^{\frac{q}{2}}\right] \leq\left(\varepsilon^{\frac q 2}+C \delta^2(1-\delta)^2 |\bar{x}-x|^{2 q} e^{2 q\left(L+\frac{2 q-1}{2} L^{2}\right) t}\right) e^{\left((2 q-1) L+2(q-1)^{2} L^{2}\right) t},
$$
which in turn gives \eqref{eq estimate SDE semiconv} by sending $\varepsilon \rightarrow 0$. 
%, we conclude that$$\mathbb{E}\left[\left|\Gamma_{t}\right|^{q}\right] \leq  C|\bar{x}-x|^{2 q} e^{\left(\left(4 q-1\right) L+\left(4 q^{2}-5 q+2\right) L^{2}\right) t}.$$
\smallbreak\noindent
\emph{Proof of \eqref{eq estimate SDE growth SUP}.}
%Set $Z_{t}:=e^{-\rho t}\left(1+\left(X_{t}^{x}\right)^{2}\right)$
First, by It\^o's formula we have
$$
\begin{aligned}
e^{-\rho t}& \left(1+\left|X_{t}^{x}\right|^{2}\right)^{\frac{q}{2}}\\
=& \left(1+|x|^{2}\right)^{\frac{q}{2}}  +q \int_{0}^{t} e^{-\rho s}\left(1+\left| X_t^x \right|^{2}\right)^{\frac{q}{2}-1} (X_{t}^x)^{*} \sigma (X_t^x ) \dd W_{t}   \\
& +\int_{0}^{t} e^{-\rho t}\left(1+\left|X_{t}^{x}\right|^{2}\right)^{\frac{q}{2}}
\bigg[-\rho +q\bigg(\frac{\left(X_{t}^{x}\right)^{*}}{1+\left|X_{t}^{x}\right|^{2}} b\left(X_{s}^x\right) \\
& \qquad \qquad \qquad \qquad \qquad \qquad
+ \left(\frac{q}{2}-1\right) \frac{|( X_{t}^{x})^* \sigma( X_{t}^x ) | ^2} {
\left( 1+| X_t^x|^2 \right)^{2}} + \frac{1}{2} \frac{|\sigma  ( X_t^x)|^2}{  1+ | X_t^x|^{2} } \bigg) \bigg] \dd t \\
\leq & (1+|x|^2)^{\frac{q}{2}}+ q \int_{0}^{t} e^{-\rho t} \left( 1+\left|X^x_t \right|^{2}\right)^{\frac{q}{2}-1} (X^x_{t})^{*} \sigma\left(X^x_{t} \right) \dd W_{t} \\
& + C \int_{0}^{t} e^{-\rho t}\left(1+\left|X^x_{t}\right|^{2}\right)^{\frac{q}{2}} \dd t 
.
\end{aligned}
$$
Next, for $\tau \in \mathcal T$ and $T >0$, Burkholder-Davis-Gundy inequality gives
$$
\begin{aligned}
 \mathbb{E}\left[\sup _{t \leq \tau \land T } e^{-\rho t}\left(1+\left|X_{t}^{x}\right|^{2}\right)^{\frac{q}{2}}\right] \leq&  \left(1+|x|^{2}\right)^{\frac{q}{2}}+C \int_{0}^{T} e^{-\rho t} \mathbb{E}\left[\left(1+\left|X_{t}^{x}\right|^{2}\right)^{\frac{q}{2}}\right] \dd t \\
&+C \mathbb{E}\left[ \left( \int_{0}^{T} e^{-2 \rho t}\left(1+\left|X_{t}^{x}\right|^{2}\right)^{q-2}\left|X_{t}^{x}\right|^{2} | \sigma (X_{t}^{x})|^{2} \dd t\right)^{\frac{1}{2}} \right]. 
\end{aligned}
$$
Thus, using \eqref{eq estimate SDE growth} and Jensen inequality we find
$$
\begin{aligned} 
\mathbb{E}\left[\sup _{t \leq \tau \land T} e^{-\rho t}\left(1+\left|X_{t}^{x}\right|^{2}\right)^{\frac{q}{2}}\right] \leq& C \left(1+|x|^{q}\right) + C \left(1+|x|^{q}\right) \int_{0}^{\infty} e^{\left(-\rho + \hat{c}_{0}(q) \right) t} \dd t \\
& 
+ C \left(\int_{0}^{\infty} e^{-2 \rho t} \mathbb{E} \left[ \left(1+\left|X_{t}^{x}\right|^{2}\right)^{q}\right] \dd t\right)^{\frac{1}{2}} \\
& \leq C\left(1+|x|^{q}\right)
+C\left(1+|x|^{q}\right) \left(\int_{0}^{\infty} e^{(-2 \rho + \hat c _0 (2q) )t} \dd t\right)^{\frac{1}{2}} \\
& \leq C \left(1+|x|^{q}\right), 
\end{aligned}
$$
where the finiteness of the integrals follows from the conditions $\rho > \hat c _0(q)$ and $2 \rho > \hat c _0 (2q)$ and the particular choice of $q$ in \eqref{eq estimate SDE growth SUP} (which are part of Condition \ref{condition rho} in Assumption \ref{assumption general}).
The estimate \eqref{eq estimate SDE growth SUP} follows taking limits as $T \to  \infty$ and using the dominated convergence theorem.

\smallbreak\noindent
\emph{Proof of \eqref{eq estimate SDE Lip SUP}.}
With the notation $\Delta_{t}:=X_{t}^{\bar{x}}-X_{t}^{x}$, 
similarly to \eqref{eq useful Lip} we find
$$
\begin{aligned}
\left(\varepsilon+\left|\Delta_{t}\right|^{2}\right)^{\frac{q}{2}} \varepsilon\left(\varepsilon+|\bar{x}-x|^{2}\right)^{\frac{q}{2}} & +\left(-\rho+q\left(L+\frac{q-1}{2} L^{2}\right)\right) \int_{0}^{t} e^{-\rho t}\left(\varepsilon+\left|\Delta_{t}\right|^{2}\right)^{\frac{q}{2}} \dd t \\
& +q \int_{0}^{t} e^{-\rho t}\left(\varepsilon+\left|\Delta_{t}\right|^{2}\right)^{\frac{q-2}{2}} \Delta_{t}^{*}\left(\sigma\left(X_{t}^{\bar{x}}\right)-\sigma\left(X_{t}^{x}\right)\right) \dd W_{t}.
\end{aligned}
$$
For $\tau \in \mathcal T$ and $T >0$, by using Burkholder-Davis-Gundy inequality  and Jensen inequality, thanks to \eqref{eq estimate SDE Lip} we obtain
$$
\begin{aligned}
\mathbb{E}\left[\sup _{t \leq \tau \land T}\left(\varepsilon+\left|\Delta_{t}\right|^{2}\right)^{\frac{q}{2}}\right] 
\leq& 
C (\varepsilon^{\frac q 2}+|\bar{x}-x|^{q}) +C \int_{0}^{T} e^{-\rho t} \mathbb{E}\left[|\Delta_t|^{q} \right] \dd t \\
& +C\left(\int_{0}^{T} e^{-2 \rho t } \mathbb{E}\left[  |\Delta _t|^{2 q} \right] \dd t\right)^{\frac{1}{2}} \\
 \leq& C \left( \varepsilon^{\frac q 2} + |\bar{x}-x|^{q} \left( 1 + \int_0^\infty \left(  e^{(-\rho + \hat c_1(q) )t} +  e^{(-2\rho + \hat c_1(2q) )t}\right) \dd t \right) \right) \\
 \leq & C \left( \varepsilon^{\frac q 2} + |\bar{x}-x|^{q}\right),
\end{aligned}
$$
where the finiteness of the integrals follows from the conditions $\rho > \hat c _1(q)$ and $2 \rho > \hat c _1 (2q)$ and the choice of $q$ in \eqref{eq estimate SDE Lip SUP} (which are part of Condition \ref{condition rho} in Assumption \ref{assumption general}).

From the latter estimate, take limits as $\varepsilon \to 0$ and $T \to \infty$ and use monotone convergence theorem to derive  \eqref{eq estimate SDE Lip SUP}.

\smallbreak\noindent
\emph{Proof of \eqref{eq estimate SDE semiconv SUP}.}
With the same notations as in the proof of \eqref{eq estimate SDE semiconv} above, by repeating the steps leading to \eqref{eq useful semiconc SDE estimate}, we find
$$
\begin{aligned}
e^{-\rho t}\left(\varepsilon+\left|\Gamma_{t}\right|^{2}\right)^{\frac{q}{2}} \leqslant  \varepsilon^{\frac{q}{2}} & +C \int_{0}^{t} e^{-\rho t}\left(\varepsilon+\left|\Gamma_{t}\right|^{2}\right)^{\frac{q}{2}} \dd t+C  \delta^2(1-\delta)^2 \int_{0}^{t} e^{-\rho t}\left|X_{t}^{\bar{x}}-X_{t}^{x}\right|^{2 q} \dd t \\
& +q \int_{0}^{t} e^{-\rho t} \left(\varepsilon+\left|\Gamma_{t}\right|^{2}\right)^{\frac{q-2}{2}} \Gamma_{t}^{*}\left(\hat{\sigma}_{t}-\sigma\left(X_t^\delta\right)\right) \dd W_{t}
\end{aligned}
$$
Hence, for $\tau \in \mathcal T$ and $T >0$, by using Burkholder-Davis-Gundy inequality  and Jensen inequality, thanks to \eqref{eq estimate SDE Lip} and \eqref{eq estimate SDE semiconv} we get
\begin{equation}\label{eq estimate original semconc}
\begin{aligned}
\mathbb{E}\bigg[\sup _{t \leq \tau \land T} & e^{-\rho t}\left(\varepsilon+\left|\Gamma_ t \right|^{2}\right)^{\frac{q}{2}}\bigg]  \\
\leq & C \varepsilon^{\frac{q}{2}} 
+C \int_{0}^{T} e^{-\rho t} \mathbb E \left[  \left|\Gamma_{t}\right|^{q} \right] \dd t \\
& + C  \delta^2(1-\delta)^2 \int_{0}^{T} e^{-\rho t}\mathbb E \left[ \left|X_{t}^{\bar{x}}-X_{t}^{x}\right|^{2 q} \right] \dd t \\
& +C \bigg( \int_{0}^{T} e^{-2 \rho t} 
\mathbb E \left[ \left(\varepsilon+\left|\Gamma_{t}\right|^{2}\right)^{q-2} |\Gamma_{t}|^{2}\left| \hat{\sigma}_{t}-\sigma\left(X_t^\delta\right)\right|^2 \right] \dd t \bigg)^{\frac{1}{2}}.
\end{aligned}
\end{equation}
To estimate the last term in the right-hand side, we first use the regularity of $\sigma$ to obtain
$$
\begin{aligned}
\int_{0}^{T} &  e^{-2 \rho t} 
\mathbb E \left[ \left(\varepsilon+\left|\Gamma_{t}\right|^{2}\right)^{q-2} |\Gamma_{t}|^{2}\left| \hat{\sigma}_{t}-\sigma\left(X_t^\delta\right)\right|^2 \right] \dd t \\
& \leq C \int_{0}^{T} e^{-2 \rho t} 
\mathbb E \left[ \left(\varepsilon+\left|\Gamma_{t}\right|^{2}\right)^{q-1} \left( \left| \hat{\sigma}_{t}  - \sigma \left( \hat X _t \right) \right|^2 + \left| \sigma \left( \hat X _t \right) - \sigma\left(X_t^\delta\right)\right|^2 \right) \right] \dd t \\
& \leq C \int_{0}^{T} e^{-2 \rho t} 
\mathbb E \left[ \left(\varepsilon+\left|\Gamma_{t}\right|^{2}\right)^{q-1} \left( \delta^2 (1-\delta)^2  \left|   \hat X _t  - X_t^\delta \right|^4 + \left| \Gamma_t \right|^2 \right) \right] \dd t, 
\end{aligned}
$$
and then employ Young inequality with exponent $\frac{q}{q-1}$ and its conjugate $q$ to conclude that
$$
\begin{aligned}
\int_{0}^{T} & e^{-2 \rho t} 
\mathbb E \left[ \left(\varepsilon+\left|\Gamma_{t}\right|^{2}\right)^{q-2} |\Gamma_{t}|^{2}\left| \hat{\sigma}_{t}-\sigma\left(X_t^\delta\right)\right|^2 \right] \dd t \\
& \leq C \left( \varepsilon^{\frac q 2} 
+\int_{0}^{T} e^{-2 \rho t} \mathbb E \left[ \left| \Gamma_t \right|^{2q}  \right] \dd t 
+ \delta^2 (1-\delta)^2  \int_0^T e^{-2 \rho t} \mathbb E \left[ \left|   \hat X _t  - X_t^\delta \right|^{4q} \right] \dd t \right). 
\end{aligned}
$$
By plugging the latter inequality into \eqref{eq estimate original semconc}, and then using \eqref{eq estimate SDE Lip} and \eqref{eq estimate SDE semiconv}, we find
$$
\begin{aligned}
\mathbb{E} &\left[\sup _{t \leq \tau \land T} e^{-\rho t}\left(\varepsilon+\left|\Gamma_ t \right|^{2}\right)^{\frac{q}{2}}\right] \\
& \leq  C \left(  \varepsilon^{\frac{q}{2}} 
+\int_{0}^{T} e^{-\rho t} \mathbb E \left[  \left|\Gamma_{t}\right|^{q} \right] d t + \delta^2 (1-\delta)^2  \int_{0}^{T} e^{-\rho t}\mathbb E \left[ \left|X_{t}^{\bar{x}}-X_{t}^{x}\right|^{2 q} \right] \dd t \right.\\
& \quad \qquad \ \ \left. + \bigg(\varepsilon^{\frac q 2} 
+\int_{0}^{T} e^{-2 \rho t} \mathbb E \left[ \left| \Gamma_t \right|^{2q}  \right] \dd t 
+ \delta^2 (1-\delta)^2  \int_0^T e^{-2 \rho t} \mathbb E \left[ \left|   \hat X _t  - X_t^\delta \right|^{4q} \right] \dd t    \bigg)^{\frac{1}{2}} \right) \\
& \leq C \left(  \varepsilon^{\frac{q}{2}} 
+ \delta^2(1-\delta)^2 |\bar{x}-x|^{2 q} \int_{0}^{\infty} \left( e^{(-\rho+ \hat c_2(q)) t} + e^{(-\rho+ \hat c_1(2q)) t} \right) \dd t  \right.\\
& \quad \qquad \ \ \left. + \bigg(\varepsilon^{\frac q 2} 
+  \delta^2(1-\delta)^2 |\bar{x}-x|^{4 q} \int_{0}^{\infty} \left( e^{(-2 \rho + \hat c_2 (2q)) t} + e^{(-2 \rho + \hat c_1 (4q)) t} \right) \dd t \bigg)^{\frac{1}{2}} \right).
\end{aligned}
$$ 
Finally, by sending $\varepsilon \to 0$, we conclude that
$$
\begin{aligned}
\mathbb{E} \left[\sup _{t \leq \tau \land T} e^{-\rho t} \left|\Gamma_ t \right|^{q} \right] 
\leq&  C \delta(1-\delta) |\bar{x}-x|^{2 q}   \int_{0}^{\infty} \left( e^{(-\rho+ \hat c_2(q)) t} + e^{(-\rho+ \hat c_1(2q)) t} \right) \dd t \\
&+  C \delta(1-\delta) |\bar{x}-x|^{2 q}  \bigg(\int_{0}^{\infty} \left( e^{(-2 \rho + \hat c_2 (2q)) t} + e^{(-2 \rho + \hat c_1 (4q)) t} \right) \dd t \bigg)^{\frac{1}{2}} \\
\leq &  C \delta(1-\delta) |\bar{x}-x|^{2 q}, 
\end{aligned}
$$
where the finiteness of the integrals follows from taking $q=2$ and from the conditions $\rho > \hat c _1(4q)$ and $2 \rho > \hat c _2 (2q)$ (which are part of Condition \ref{condition rho} in Assumption \ref{assumption general}).
The estimate \eqref{eq estimate SDE Lip SUP} follows taking limits as $T \to  \infty$ and using the dominated convergence theorem.

\vspace{10pt}

\noindent \textbf{Acknowledgements.} 
Supported by 
the project E83C25000470005 financed by University of Rome Tor Vergata, by the Deutsche Forschungsgemeinschaft (DFG, German Research Foundation) - Project-ID 317210226 - SFB 1283 and by the NSF CAREER award DMS-2339240.

\bibliographystyle{siam}
\bibliography{main.bib,references}

\newpage

\end{document}